\numberwithin{equation}{section}
\theoremstyle{plain}
    \newtheorem{theorem}[equation]{Theorem}
    \newtheorem{lemma}[equation]{Lemma}
    \newtheorem{proposition}[equation]{Proposition}
    \newtheorem*{theorem*}{Theorem}
    \newtheorem*{proposition*}{Proposition}
    \newtheorem*{corollary*}{Corollary}
    \newtheorem*{lemma*}{Lemma}
    \newtheorem*{conjecture*}{Conjecture}
    \newtheorem{definition-theorem}[equation]{Definition/Theorem}
    \newtheorem{definition-lemma}[equation]{Definition/Lemma}
\theoremstyle{definition}
    \newtheorem{definition}[equation]{Definition}
    \newtheorem{example}[equation]{Example}
    \newtheorem{notation}[equation]{Notation}
    \newtheorem{remark}[equation]{Remark}
    \newcommand{\R}{\mathbb{R}}
    \newcommand{\C}{\mathbb{C}}
    \newcommand{\Z}{\mathbb{Z}}
    \newcommand{\U}{\mathfrak{U}}
    \newcommand{\Fourier}{\mathcal{F}}
   	\renewcommand{\phi}{\varphi}
	\let\epsilon\varepsilon
\newcommand{\llangle}{\langle\!\langle}
\newcommand{\rrangle}{\rangle\!\rangle}
\newcommand{\dd}{d}
\newcommand{\germ}{\mathfrak}
\newcommand{\id}{\mathrm{id}}
    \DeclareMathOperator{\Hom}{Hom}
    \DeclareMathOperator{\End}{End}
    \DeclareMathOperator{\Res}{Res}
    \DeclareMathOperator{\Aut}{Aut}
    \DeclareMathOperator{\Ind}{Ind}
    \DeclareMathOperator{\ind}{ind}
    \DeclareMathOperator{\SL}{SL}
	\DeclareMathOperator{\vol}{vol}
	\DeclareMathOperator{\HC}{\mathcal C}
	\DeclareMathOperator{\Sc}{\mathcal S}
	\DeclareMathOperator{\conv}{conv}
	\DeclareMathOperator{\Fr}{{\text{\sl Restr}\,}}
	\DeclareMathOperator{\Extn}{\text{\sl Extn\,}}
	\newcommand{\even}{{\textnormal{even}}}
	\newcommand{\odd}{{\textnormal{odd}}}
\newcommand{\op}[1]{{#1}_{\textnormal{--}}}
\newcommand{\plus}[1]{{#1}_{\textnormal{+}}}
\newcommand{\pnm}[1]{{#1}_{{\pm}}}
\newcommand{\mnp}[1]{{#1}_{\mp}}
	\newcommand{\SFMod}[1]{\mathsf{SFMod}_{#1}}
	\newcommand{\SF}{\mathsf{SF}}
\title{A Second Adjoint Theorem for $\SL(2,\R)$}
\author{Tyrone Crisp\thanks{Partially supported by the Danish National Research Foundation through the Centre for Symmetry and Deformation (DNRF92).} \and 
Nigel Higson\thanks{Partially supported by the US National Science Foundation through the grant DMS-1101382.}}
\noindent \textsc{Max-Planck-Institut für Mathematik, Vivatsgasse 7, 53111 Bonn, Germany.}\par\nopagebreak
\noindent \href{mailto:tyronecrisp@mpim-bonn.mpg.de}{tyronecrisp@mpim-bonn.mpg.de}
\noindent \textsc{Department of Mathematics, Penn State University, University Park, PA 16802, USA.}\par\nopagebreak
\noindent \href{mailto:higson@math.psu.edu}{higson@math.psu.edu}
\date{}
\begin{document}

\maketitle

\begin{abstract}
 \noindent We formulate a second adjoint  theorem in the context of tempered representations of  {real} reductive   groups, and prove it in the case of $\SL(2,\R)$.\end{abstract}

\section{Introduction}

Bernstein's famous \emph{second adjoint theorem} in the smooth representation theory of reductive $p$-adic groups asserts that for every parabolic subgroup $P$ of a reductive $p$-adic algebraic group $G$, the functor of parabolic induction has not only a left adjoint (this is Frobenius reciprocity) but also a right adjoint.   See \cite{Bernstein-2-adjoint,Bernstein-p-adic}.  The purpose of this paper is to formulate and prove a similar theorem in the context of \emph{real} reductive groups, but specifically for \emph{tempered} representations.  We shall concentrate on the group $G=\SL(2,\R)$; much greater generality is possible, but at the cost of  complicating the discussion.

We shall approach representations in general through convolution algebras, and tempered representations in particular through Harish-Chandra's Schwartz space  $\HC(G)$.  By a tempered representation of $G$ we shall mean a smooth, Fr\'echet module (an $\SF$-module) over $\HC(G)$; see Section~\ref{sec-categories}.   

Denote by $\plus{N}$ the group of unipotent upper triangular matrices in $G= \SL(2,\R)$,  by $\plus{P}$ the associated parabolic subgroup, and by $L$ its Levi factor (the diagonal matrices).  

\begin{theorem*}
The functor of parabolic induction
\[
\Ind_{\plus{`P}}^G \colon \SFMod{\HC(L)}\longrightarrow  \SFMod{\HC(G)}
\]
has both a left adjoint and a right adjoint.
\end{theorem*}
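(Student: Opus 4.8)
The plan is to realise parabolic induction as tensoring over $\HC(L)$ with a fixed $\HC(G)$-$\HC(L)$-bimodule, to prove that this bimodule is dualizable on \emph{both} sides, and then to read off the two adjoints as tensoring with the two dual bimodules. The first step is to produce a Harish--Chandra Schwartz space $\mathcal{E}=\HC(G/\plus{N})$ of the homogeneous space $G/\plus{N}$. Since the stabiliser in $G=\SL(2,\R)$ of the first basis vector of $\R^2$ is exactly $\plus{N}$, one has $G/\plus{N}\cong\R^2\setminus\{0\}$, with $G$ acting linearly on the left and with a commuting right action of $L\cong\R^{\times}$ (which normalises $\plus{N}$) given by scalar multiplication. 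Realising $\mathcal{E}$ as a suitable completion of the image of $\HC(G)$ under integration along $\plus{N}$ makes it a left $\HC(G)$-module and right $\HC(L)$-module, and one checks that, up to the standard modular twist,
\[
\Ind_{\plus{P}}^G(M)\;\cong\;\mathcal{E}\,\widehat{\otimes}_{\HC(L)}M
\]
naturally in $M$. A right adjoint is then given on objects by $V\mapsto\Hom_{\HC(G)}(\mathcal{E},V)$, and a left adjoint would follow from an identification $\mathcal{E}\,\widehat{\otimes}_{\HC(L)}M\cong\Hom_{\HC(L)}(\mathcal{F},M)$ for a second bimodule $\mathcal{F}$; so everything is reduced to finiteness properties of $\mathcal{E}$.

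The core of the argument is therefore to prove that $\mathcal{E}$ is finitely generated and projective as a right $\HC(L)$-module, and likewise as a left $\HC(G)$-module, so that the dual bimodules $\mathcal{E}^{\vee}=\Hom_{\HC(L)}(\mathcal{E},\HC(L))$ and ${}^{\vee}\mathcal{E}=\Hom_{\HC(G)}(\mathcal{E},\HC(G))$ give natural isomorphisms $\Hom_{\HC(G)}(\mathcal{E},V)\cong\mathcal{E}^{\vee}\,\widehat{\otimes}_{\HC(G)}V$ and $\mathcal{E}\,\widehat{\otimes}_{\HC(L)}M\cong\Hom_{\HC(L)}({}^{\vee}\mathcal{E},M)$. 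Granting this, $\mathcal{E}^{\vee}\,\widehat{\otimes}_{\HC(G)}(-)$ is a right adjoint and ${}^{\vee}\mathcal{E}\,\widehat{\otimes}_{\HC(G)}(-)$ is a left adjoint of $\Ind_{\plus{P}}^G$, with units and counits furnished by the evaluation and coevaluation maps of the two dualities and with the triangle identities a formal consequence. For orientation — though it is not needed for the bare existence statement — one expects the left adjoint to be the tempered analogue of the Jacquet functor for $\plus{P}$ (so that the first adjunction is the Frobenius reciprocity one would write down by hand), and the right adjoint to be the corresponding functor for the \emph{opposite} parabolic $\op{P}$; this coincidence is the structural content of second adjointness, following Bernstein's $p$-adic strategy.

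The main obstacle is the analytic input behind the dualizability of $\mathcal{E}$. At the level of Harish--Chandra modules the analogous bimodule is \emph{not} finitely generated — this is exactly why a naive second adjoint theorem fails for general admissible representations, and why one must work with tempered representations and Schwartz completions. I expect two ingredients. First, the explicit Plancherel structure of $\HC(\SL(2,\R))$ and of $\HC(L)=\HC(\R^{\times})\cong\mathcal{S}(\R)\oplus\mathcal{S}(\R)$: the algebra $\HC(G)$ decomposes into discrete-series summands and a principal-series block that is Morita equivalent to a matrix algebra over the $\Z/2$-invariants $\mathcal{S}(\R)^{\Z/2}$, with an extra relation at the reducibility parameter $\lambda=0$. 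Second, a Mellin-transform analysis of $\mathcal{E}=\HC(\R^2\setminus\{0\})$ with respect to the scaling action of $\R^{\times}$: one must control the decay and the asymptotic expansions of the relevant Schwartz functions on $\R^2\setminus\{0\}$ both at infinity and at the origin, since it is precisely near the origin that the elementary ``functions on $G/\plus{N}$'' picture degenerates and has to be replaced by boundary data. Matching the two pictures should show that over the principal-series block $\mathcal{E}$ is a projective module of the expected small rank while it vanishes over the discrete-series summands (consistent with the cuspidality of the discrete series), and I expect the genuinely delicate points to be the reducibility parameter $\lambda=0$ and the two limits of discrete series. Running throughout is the smooth nuclear Fréchet formalism of Section~\ref{sec-categories} — nuclearity, approximate identities, and the resulting good behaviour of $\widehat{\otimes}$ and $\Hom$ — which is what guarantees that the dual bimodules and the Hom-functors above really land in $\SF$-modules over $\HC(G)$ and $\HC(L)$.
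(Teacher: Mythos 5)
Your bimodule-theoretic reframing — realise $\Ind_{\plus{P}}^G$ as $\mathcal{E}\otimes_{\HC(L)}-$ with $\mathcal{E}=\HC(G/\plus{N})$, and hunt for two-sided dualizability — is a genuinely different strategy from the paper's, and your guess for the shape of the answer (left adjoint = Jacquet functor for $\plus{P}$, right adjoint = Jacquet functor for $\op{P}$, with the principal-series block carrying everything and the discrete series contributing nothing) is correct. But as written the proposal has two substantive gaps, one structural and one analytic, and I want to flag both.

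The structural issue is that $\Hom_{\HC(G)}(\mathcal{E},V)$ does not obviously land in $\SFMod{\HC(L)}$; it is in general not even a Fr\'echet space. So the statement ``a right adjoint is then given on objects by $V\mapsto\Hom_{\HC(G)}(\mathcal{E},V)$'' is not a free lunch — you would need to replace the Hom by something like $\HC(L)\otimes_{L}\Hom_{\HC(G)}(\mathcal{E},V)$ and then argue, and the argument you would need is not lighter than the direct one. More importantly, ``finitely generated projective over a nuclear Fr\'echet algebra'' is not an off-the-shelf notion with the usual consequences: you cannot simply invoke compactness of f.g.p.\ modules to get $\Hom_{\HC(G)}(\mathcal{E},-)\cong\mathcal{E}^{\vee}\otimes_{\HC(G)}-$. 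The paper sidesteps this entirely by never asserting dualizability in a module-theoretic sense; it works with explicit unit and counit morphisms and verifies the two triangle identities by hand.

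The analytic gap is the larger one, and it is exactly the half of the duality data your sketch leaves unconstructed. The paper's unit for the second adjunction, $\Extn\colon\HC(L)\to\HC(\op{N}\backslash G)\otimes_{G}\HC(G/\plus{N})$, is elementary: it is extension by zero along the open Bruhat cell $\op{N}L\plus{N}\subseteq G$ (the mirror image of the Frobenius counit, which is restriction to the closed set $L\plus{N}$). The hard part is the counit
\[
B\colon \HC(G/\plus{N})\otimes_{L}\HC(\op{N}\backslash G)\longrightarrow\HC(G),\qquad h\otimes k\longmapsto\bigl[g\mapsto\langle k^{*},\,g^{-1}\plus{I}h\rangle_{L^{2}(G/\op{N})}\bigr],
\]
where $\plus{I}$ is a continuous right inverse to the standard intertwining integral $\plus{J}$. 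Constructing $\plus{I}$ and showing that $B$ lands in $\HC(G)$ (not merely $C^{\infty}(G)$) is where all the Plancherel-theoretic content lives: the Fourier transform conjugates $\pnm{J}$ to multiplication by the Harish--Chandra $c$-functions $\pnm{c}^{(j)}(\mu)$, one inverts these (paying attention to the pole at $\mu=0$ on the even part), and then one invokes Harish--Chandra's wave packet theorem together with a symmetrisation trick using a normalised intertwiner $W$ to control the vanishing order at $\mu=0$. Your proposal gestures at the right places (``the reducibility parameter $\lambda=0$ and the two limits of discrete series'', Mellin analysis near the origin of $\R^{2}$), but it does not name the intertwining integral, its inverse, or the wave packet theorem, and these are precisely what a complete proof must supply. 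Without them you have the statement of a plan, not an argument — and in particular you have not shown that the putative coevaluation map exists, which is the whole theorem.

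One smaller remark: your slogan that $\mathcal{E}$ ``vanishes over the discrete-series summands'' is the right intuition, but it is a conclusion of the wave-packet analysis, not an input you can assume; the paper does not decompose $\HC(G)$ into blocks at all, precisely to avoid having to prove such statements in advance.
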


We refer the reader to Sections~\ref{sec-frobenius} and \ref{sec-bernstein} for the precise formulation.  It  is worth emphasizing  here, however, that our functor of parabolic induction is the standard one in the context of representations on Fr\'echet spaces; see Proposition~\ref{Ind_ind_proposition}.

The adjoint functors that arise in our second adjoint theorem are  the same as those studied by Bernstein, namely \emph{parabolic restriction} with respect to $\plus{P}$ in the case of the left adjoint, and parabolic restriction with respect to the opposite parabolic subgroup $\op{P}$ in the case of the right adjoint.  

Moreover the adjunction isomorphisms are defined in the same way as those studied by Bernstein: the counit transformation for the first adjunction  (Frobenius) is associated to the canonical inclusion of $L$ as a closed subset of the double coset space $\plus{N}\backslash G / \plus{N}$, while the unit transformation for  the second adjunction is associated to the canonical inclusion of $L$ as an \emph{open} subset of $\op{N} \backslash G / \plus{N}$.   

The unit transformation for Frobenius reciprocity is straightforward too, and is obtained directly from the formula for the   action of $G$ on   parabolically induced representations. (The unit transformation is perhaps best understood from the perspective of operator algebras, which was our starting point.   See \cite{CH_cb}.)  

However our approach to the \emph{counit} transformation for the second adjoint   departs from Bernstein's.  We shall need to make use of the theory of the standard intertwining integral, and also Harish-Chandra's theory of wave packets.  So the proof is not at all elementary.  And whereas everything else in the paper extends easily to general real reductive groups and parabolic subgroups, here we shall concentrate on the special case where  $G=\SL(2,\R)$.  The necessary results are stated in Section~\ref{sec-proof} and   proofs  are given in  Section~\ref{appendix-fourier}.   

Although we have not yet attempted a proof in the general case,  it seems likely to us that  the approach we follow for $\SL(2,\R)$ can be extended to all $G$, and all parabolic subgroups.  But, at the very least, a substantial amount of Harish-Chandra's theory will be required.

The recent work of Bezrukavnikov and Kazhdan \cite{BezKaz} offers a geometric perspective on the second adjoint theorem in the $p$-adic case.   There is a related approach in the tempered real case that involves the wave equation defined by the Casimir operator. We shall present this elsewhere.   

Another ongoing project  is to reorganize some of the foundational discoveries of Harish-Chandra, Langlands and others about tempered representations around  the second adjoint theorem and its consequences.   Once again we aim to present this work elsewhere.

\section{Categories of SF-representations}
\label{sec-categories}

By a \emph{Fr\'echet algebra} we shall mean a Fr\'echet space $\mathcal{A}$ that is equipped with a bilinear, continuous and associative multiplication operation.  Recall that in the context of Fr\'echet spaces, separately continuous bilinear maps are automatically jointly continuous.  

We shall denote by $\otimes$ the completed projective tensor product of Fr\'echet spaces (actually, all the spaces that we shall calculate with will be both Fr\'echet and nuclear, and for these the choice of tensor product is immaterial).  If $W$ is a Fr\'echet space and a  right module over an algebra $\mathcal{A}$, and $V$ is a Fr\'echet space and a left module over $\mathcal{A}$, then we shall denote by $W \otimes _{\mathcal{A}} V$ the quotient of $W \otimes V$ by the \emph{closed} subspace spanned by the relators 
\[
wa \otimes v - w \otimes av\in W \otimes V.
\]
It is a Fr\'echet space in its own right. 

\begin{definition} 
Let $\mathcal{A}$ be a Fr\'echet algebra.  A (left)  \emph{$\SF$-module} over $\mathcal{A}$ is a Fr\'echet space $V$ that is equipped with a continuous action of $\mathcal{A}$ for which the   map 
\[
\mathcal{A} \otimes _{\mathcal {A}} V \longrightarrow V
\]
induced from the module action is an isomorphism.  A \emph{morphism} of $\SF$-modules is a continuous map of Fr\'echet spaces that is also an $\mathcal{A}$-module map. We shall denote by 
$\SFMod{\mathcal{A}}$ the category of $\SF$-modules over $\mathcal{A}$.
\end{definition}

\begin{remark} 
In the guiding context of $p$-adic groups, the relevant convolution algebra $\mathcal H (G)$  of locally constant, compactly supported functions on a $p$-adic group $G$ is obviously not a Fr\'echet algebra, but we may use the algebraic tensor product, and the smooth representations of $G$ in the sense of \cite[Section 1.1]{Bernstein-p-adic} are precisely those for which the natural map 
\[
\mathcal{H}(G) \otimes _{\mathcal {H}(G)} V \longrightarrow V
\]
is an isomorphism.
\end{remark}
 
 Now, let $G$ be the group of real points of a connected linear  reductive algebraic group defined over $\R$ (in brief, a \emph{real reductive group} from now on).  Its algebraic-geometric  structure gives $G$ the structure of an (affine)
Nash manifold in the sense of \cite{AizGour}, and so there is a canonical  associated space $\Sc(G)$ of \emph{Schwartz functions} on $G$ \cite[Section 4]{AizGour}.   Since we are principally interested in the group $SL(2,\R)$, let us describe  the structure on $\Sc(G)$ explicitly  in this case.

\begin{definition} \label{Sc_definition}
Let $G=SL(2,\R)$.  The \emph{Schwartz space}  $\Sc(G)$ is the space of smooth, complex valued functions $f$ on $G$ for which 
\begin{equation}\label{Sc_seminorms}
\sup_{g\in G} | (X  f )(g)  | <\infty 
\end{equation}
for every polynomial differential operator $X$ on $G$. (A polynomial differential operator is a linear partial differential operator on $G$ that preserves the subspace of functions that are polynomials in the matrix entries of $g\in G$.)
\end{definition}

\begin{remark}
This definition is equivalent to the one appearing in \cite[Section 7]{Wallach1}, where $\Sc(G)$ is called the  \emph{space of rapidly decreasing functions}, and  to the definitions in \cite[p.\ 56]{BK} and \cite[p.\ 392]{Casselman}, where $\Sc(G)$ is called the  {Schwartz space}, as above. 
\end{remark}

The seminorms appearing in \eqref{Sc_seminorms} make $\Sc(G)$ into a nuclear Fr\'echet space and   a Fr\'echet algebra under convolution.  See \cite[Section 7.1]{Wallach1} and \cite[Section 2]{BK}. 
The following two propositions describe the $\SF$-modules over $\Sc(G)$.

\begin{proposition}[{\cite[Proposition 2.20]{BK}}]
\label{prop-moderate-growth}
Let $V$ be a Fr\'echet space  equipped with  a continuous  action of $\Sc(G)$. The following conditions on $V$ are equivalent:
\begin{enumerate}[\rm (a)]
\item $\Sc(G)V =V$
\item  There is a unique continuous $G$-action on $V$ with the properties that 
\begin{enumerate}[\rm (i)]
\item  For every continuous seminorm $p$ on $V$, the function $g\mapsto p(g{\cdot} v) $ is bounded by a polynomial in the matrix entries of $g$, independent of $v$, times $q(v)$, where $q$ is a second continuous seminorm on $V$ \textup{(}that is, the action has \emph{moderate growth} in the sense of Casselman \textup{\cite{Casselman}}\textup{)}.
\item The action of $\Sc(G)$ on $V$ is given by the integral formula 
\[
f\cdot v = \int _G f(g) \, g{\cdot} v \, \dd g
\]
\textup{(}the integral converges in view of \textup{(i)}\textup{)}.
\item For every $v\in V$ the map $g\mapsto g{\cdot}v$ is smooth. \qed
\end{enumerate}

\end{enumerate}
\end{proposition}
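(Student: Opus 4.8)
\emph{Proof sketch.}

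The implication (b) $\Rightarrow$ (a) is the theorem of Dixmier and Malliavin. Under~(b), condition~(iii) says that every vector of $V$ is smooth for the $G$-action and condition~(i) says that this action has moderate growth, so $V = C^\infty_c(G)\cdot V$: each $v$ is a finite sum $\sum_i\phi_i\cdot v_i$ with $\phi_i\in C^\infty_c(G)$, the dot denoting the integrated action. Since $C^\infty_c(G)\subseteq\Sc(G)$, and this integrated action restricts to the $\Sc(G)$-module action by~(ii), we conclude that $\Sc(G)V = V$.

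For the converse, (a) $\Rightarrow$ (b), the plan is to build the $G$-action by hand. First I would record the standard facts about the Schwartz \emph{algebra} that the construction rests on: left translation makes $\Sc(G)$ into a smooth $G$-module of moderate growth, so that $g\mapsto\lambda_g f$ is a smooth map $G\to\Sc(G)$ and $p(\lambda_g f)$ is bounded by a polynomial in the matrix entries of $g$, independent of $f$, times a continuous seminorm of $f$; convolution on $\Sc(G)$ is jointly continuous and equals the convergent $\Sc(G)$-valued integral $f * h = \int_G f(g)\,\lambda_g h\,\dd g$; and there is an approximate identity $(u_n)_{n\ge 1}$ drawn from $C^\infty_c(G)$, so that $u_n * f\to f$ in $\Sc(G)$ for every $f$. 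These are due to Harish-Chandra and may be found in \cite[Section 7.1]{Wallach1} and \cite[Section 2]{BK}. Combined with~(a) they give $u_n\cdot x\to x$ for every $x\in V$: writing $x = \sum_i f_i\cdot w_i$, one has $u_n\cdot x = \sum_i(u_n * f_i)\cdot w_i\to\sum_i f_i\cdot w_i = x$ by continuity of the action.

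Now \emph{define} the action: by~(a) every $v\in V$ is a finite sum $v = \sum_i f_i\cdot w_i$ with $f_i\in\Sc(G)$, and we put $\pi(g)v := \sum_i(\lambda_g f_i)\cdot w_i$. The crux is well-definedness. A short computation using unimodularity of $G$ yields the convolution identity $u * (\lambda_g f) = (\rho_{g^{-1}}u) * f$ in $\Sc(G)$ ($\rho$ denoting right translation), whence $u_n\cdot\pi(g)v = (\rho_{g^{-1}}u_n)\cdot v$, which depends only on $v$; since $u_n\cdot x\to x$ for all $x\in V$, we get $\pi(g)v = \lim_n(\rho_{g^{-1}}u_n)\cdot v$, independent of the chosen presentation. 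The remaining properties are then routine: $\pi$ is a representation (from $\lambda_{gh} = \lambda_g\lambda_h$ and well-definedness), the orbit maps $g\mapsto\pi(g)v = \sum_i(\lambda_g f_i)\cdot w_i$ are smooth (being finite sums of composites of smooth maps $G\to\Sc(G)$ with continuous linear maps $\Sc(G)\to V$), each operator $\pi(g)$ is continuous (as a pointwise limit of the pointwise-bounded --- hence, by Banach--Steinhaus, equicontinuous --- operators $v\mapsto(\rho_{g^{-1}}u_n)\cdot v$), $\pi$ has moderate growth (bound $p(\pi(g)v)$ through the presentation using joint continuity of the action and moderate growth of $\lambda$, then invoke the open mapping theorem for the continuous surjection $\Sc(G)\otimes V\to V$ coming from~(a) to replace the presentation-dependent constant by a fixed continuous seminorm of $v$), and $(g,v)\mapsto\pi(g)v$ is jointly continuous (separate continuity plus the local equicontinuity of $\{\pi(g): g\in K\}$, $K\subseteq G$ compact, supplied by moderate growth). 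The integral formula~(ii) is checked on $v = h\cdot w$, where $\pi(g)(h\cdot w) = (\lambda_g h)\cdot w$ and both $f\cdot v$ and $\int_G f(g)\,\pi(g)v\,\dd g$ equal $(f * h)\cdot w$ (using $f * h = \int_G f(g)\,\lambda_g h\,\dd g$ and continuity of the action), and then extended by linearity. Finally, any action $\pi'$ satisfying~(b) has $\pi'(g)(h\cdot w) = \int_G h(x)\,\pi'(gx)w\,\dd x = (\lambda_g h)\cdot w = \pi(g)(h\cdot w)$ by~(ii), continuity of $\pi'(g)$, and unimodularity, so $\pi' = \pi$ on $\Sc(G)V = V$, giving uniqueness.

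The part I expect to be the real work is the construction of $\pi$ and, especially, its well-definedness, which rests on the approximate-identity argument and the identity $u * (\lambda_g f) = (\rho_{g^{-1}}u) * f$; the two genuinely analytic inputs are the Dixmier--Malliavin theorem (for (b) $\Rightarrow$ (a)) and the smoothness and moderate-growth properties of translation on the Schwartz algebra, which I would quote from the literature rather than reprove. Everything else is formal bookkeeping with the module structure.
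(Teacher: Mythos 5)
The paper does not prove Proposition~\ref{prop-moderate-growth}: it is stated with a citation to Bernstein and Kr\"otz \cite[Proposition 2.20]{BK} and the end-of-proof mark placed directly after the statement, so there is no in-paper argument to compare yours against. Judged on its own terms, your sketch is a correct and essentially complete outline. The two analytic inputs you isolate are the right ones, and are the same ones the paper itself leans on elsewhere (the Example immediately following the proposition invokes Dixmier--Malliavin to see $\Sc(G)\,\Sc(G)=\Sc(G)$, and Proposition~\ref{prop-SF-equals-SF} relies on the smoothness of translation on $\Sc(G)$): Dixmier--Malliavin handles (b)~$\Rightarrow$~(a), and the smooth, moderate-growth structure of left translation on $\Sc(G)$ together with an approximate identity $u_n\in C_c^\infty(G)$ handles (a)~$\Rightarrow$~(b). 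The well-definedness mechanism --- the convolution identity $u*(\lambda_g f)=(\rho_{g^{-1}}u)*f$, which I verified under the convention $(f*h)(x)=\int_G f(y)h(y^{-1}x)\,\dd y$, combined with $u_n\cdot x\to x$ for $x\in\Sc(G)V=V$ --- is exactly what is needed and does the job cleanly, and the same identity gives the representation property and the uniqueness statement almost for free. The only place the sketch is noticeably compressed is the moderate-growth estimate for $\pi$. The clean route, which your parenthetical already gestures at, is: for each $g$, the map $f\otimes w\mapsto(\lambda_g f)\cdot w$ factors through the surjection $\Sc(G)\otimes V\to V$ (this is precisely the well-definedness you established), the surjection is open by the open mapping theorem, and the bound on $\Sc(G)\otimes V$ is uniform in $g$ up to the polynomial factor $r(g)$ supplied by $q_1(\lambda_g f)\le r(g)\,q_1'(f)$; pushing the bound down the quotient gives $p(\pi(g)v)\le r(g)\,q(v)$. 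With that point unpacked I see no gaps.
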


\begin{example} The left and right actions of $G$ on the  Fr\'echet space $V=\Sc(G)$ satisfy the conditions in item (b) above (and the condition $\Sc(G)V = V$ follows, for example, from the Dixmier-Malliavin theorem \cite{DixmierMalliavin}).
\end{example}

\begin{proposition}
\label{prop-SF-equals-SF}
If $V$ is a Fr\'echet space that is equipped with a continuous action of $\Sc(G)$, then $V$ is an $\SF$-module over $\Sc(G)$ if and only if $\Sc(G)V = V$.
\end{proposition}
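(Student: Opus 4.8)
The plan is to reduce the statement to a factorization theorem for the Fréchet algebra $\Sc(G)$. Write $\mathcal A=\Sc(G)$, and let $m\colon\mathcal A\otimes_{\mathcal A}V\to V$ be the map induced by the action, which is required to be an isomorphism in the definition of an $\SF$-module. Since $\mathcal A\otimes_{\mathcal A}V$ is by definition a quotient of the Fréchet space $\mathcal A\otimes V$ by a \emph{closed} subspace, it is again a Fréchet space, so by the open mapping theorem it suffices to prove that $m$ is a bijection. Now $m$ is the factorization, through the quotient map $q\colon\mathcal A\otimes V\to\mathcal A\otimes_{\mathcal A}V$, of the canonical continuous map
\[
m'\colon\mathcal A\otimes V\longrightarrow V,\qquad m'(f\otimes v)=f{\cdot}v .
\]
Since $\ker q$ is the closed linear span $N$ of the relators $(f*h)\otimes v-f\otimes h{\cdot}v$, the map $m$ is surjective if and only if $m'$ is, and $m$ is injective if and only if $\ker m'=N$.

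The one ingredient that is not formal is the Dixmier--Malliavin factorization theorem for $\Sc(G)$ --- that the convolution product $\mathcal A\otimes\mathcal A\to\mathcal A$ is surjective, every element of $\mathcal A$ being a finite sum of convolutions of elements of $\mathcal A$. I would use it through two consequences. First: for every Fréchet space $W$ carrying a continuous $\mathcal A$-action, the image of $\mathcal A\otimes W\to W$ is exactly the linear span $\mathcal A W$, and it is a closed subspace of $W$. Second, and this is the crux, the bar sequence
\[
\mathcal A\otimes\mathcal A\otimes V\xrightarrow{\ b\ }\mathcal A\otimes V\xrightarrow{\ m'\ }V\longrightarrow 0,\qquad b(f\otimes h\otimes v)=(f*h)\otimes v-f\otimes h{\cdot}v,
\]
is exact at $\mathcal A\otimes V$; that is, $\ker m'=\overline{\image b}=N$, so that $\mathcal A$ is homologically unital. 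I expect this second statement to be the main obstacle: the inclusion $N\subseteq\ker m'$ is automatic, but the reverse inclusion is exactly where the analysis of $\Sc(G)$ --- the full strength of Dixmier--Malliavin --- must be invoked, and it is not a soft argument.

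Granting these two facts, the proposition follows at once. If $V$ is an $\SF$-module then $m$, and hence $m'$, is surjective, so by the first consequence $V=\image m'=\mathcal A V$; thus $\mathcal A V=V$. Conversely, if $\mathcal A V=V$ then $m'$ already hits all of $V$, so $m$ is surjective, while the second consequence gives $\ker m'=N$ and hence that $m$ is injective. Therefore $m$ is a continuous bijection of Fréchet spaces, and so an isomorphism. (If one prefers, the second consequence and the argument for injectivity can be transcribed into the language of Proposition~\ref{prop-moderate-growth}, using the moderate-growth smooth $G$-action it supplies; this does not change the essential point, which remains the exactness of the bar sequence.)
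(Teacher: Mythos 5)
Your outline correctly identifies the formal skeleton: reduce to bijectivity of $m$ via the open mapping theorem, note that surjectivity amounts to $\Sc(G)V=V$, and observe that injectivity of $m$ is precisely the exactness of the bar complex at $\Sc(G)\otimes V$, i.e.\ $\ker m'=\overline{\operatorname{image}b}$. But you leave this exactness unproved, and the tool you propose to invoke is not the one that does the job. Dixmier--Malliavin is a factorization theorem; it supports your ``first consequence'' and the forward implication, but it does not by itself produce the reverse inclusion $\ker m'\subseteq\overline{\operatorname{image}b}$, which you yourself flag as ``the main obstacle.'' The paper proves the exactness by a different, constructive device: given $\Sc(G)V=V$, Proposition~\ref{prop-moderate-growth} equips $V$ with a smooth, moderate-growth $G$-action, and then, following Blanc--Brylinski, one chooses a compactly supported smooth function $u$ on $G$ with $\int_G u=1$ and defines contraction operators
\[
s\colon f\longmapsto\bigl[(g_0,\dots,g_p)\mapsto u(g_0)\,f(g_0g_1,\dots,g_p)\bigr]
\]
on $\Sc(G^{\times p},V)\cong\Sc(G)^{\otimes p}\otimes V$. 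These furnish a chain contraction of the $b'$-complex; in particular $b\circ s$ restricts to the identity on $\ker m'$, so $\ker m'=\operatorname{image}b$ is closed and coincides with the span of the relators, giving injectivity of $m$. This explicit contraction --- not the Dixmier--Malliavin factorization --- is the essential analytic content of the proposition. Your argument is not wrong in structure, but it stops short at exactly its crucial step.
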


\begin{proof} 
If $V$ is an $\SF$-module, then it is a quotient of $\Sc(G)\otimes V$.  The tensor product satisfies the conditions in item (b) of Proposition~\ref{prop-moderate-growth}, and the class of representations satisfying these conditions is closed under quotients \cite[Lemma 2.9]{BK}, so $V$ satisfies the conditions too. 

Suppose conversely that $\Sc(G)V = V$.  The following argument, taken from     \cite[Proposition 1.4]{BlancBryl},   constructs a chain contraction  of the so-called $b'$-com\-plex
\begin{equation}
\label{eq-b-prime-cplx}
\cdots \longrightarrow \Sc(G)\otimes\Sc( G)\otimes  V \longrightarrow \Sc(G) \otimes   V \longrightarrow V  
\end{equation}
with differentials
\[
f_1\otimes \cdots \otimes f_p\otimes v \longmapsto \sum_{j=1}^p (-1)^{j-1} f_1 \otimes \cdots \otimes f_jf_{j+1} \otimes \cdots \otimes v
\]
(in the formula  we set $f_{p+1} = v$).   
At the bottom level, the chain contraction establishes  the isomorphism 
\[
 \Sc(G)\otimes_{\Sc(G)} V\stackrel \cong \longrightarrow V
 \]
that we  require.

Because $\Sc(G)$ is a nuclear Fr\'echet space, the chain groups in \eqref{eq-b-prime-cplx}  identify with the spaces 
\[
\Sc(G\times\cdots \times G,V)
\]
 of $V$-valued Schwartz functions on $G{\times}\cdots {\times} G$ (to define the concept of $V$-valued Schwartz function, replace the absolute value in \eqref{Sc_seminorms} with any of the continuous seminorms on $V$).  
The contraction operators are defined by 
\[
f\longmapsto \Bigl [ (g_0,\dots, g_p) \mapsto u(g_0)f(g_0g_1,\dots , g_p)\Bigr ]
\]
where $f\in \Sc(G\times\cdots \times G,V)$, and where $u$ is a smooth, compactly supported function on $G$ with total integral $1$.
\end{proof}

\section{Tempered Representations}
\label{sec-tempered-rep}

The focus of our attention in this paper will be a second, also well known, convolution algebra: the Schwartz space $\HC(G)$ of Harish-Chandra. In this section we shall recall the definition of $\HC(G)$ in the case of $G=\SL(2,\R)$. We refer the reader to \cite{Wallach1} for the general case. 

Let  $G=\SL(2,\R)$, and denote by  $\plus{P}$ and $\op{P}$   the parabolic subgroups of upper- and lower-triangular matrices, respectively. Denote by $\plus{N}$ and $\op{N}$ their respective unipotent radicals of unipotent upper- and lower-triangular matrices, and let $L=\plus{P}\cap \op{P}$ be the common Levi subgroup of diagonal matrices.

Let $\pnm{\delta}: L\to \R^+$ be the homomorphisms characterized by the equalities 
\begin{equation}\label{delta_equation}
\int_{\pnm{N}} f(n)\, \dd n = \pnm{\delta}(\ell) \int_{\pnm{N}} f(\ell n \ell^{-1})\, \dd n
\end{equation}
for all $f\in C_c^\infty(\pnm{N})$. Explicitly, 
\[
\pnm{\delta}: \begin{bmatrix} \alpha & 0 \\ 0 & \alpha^{-1} \end{bmatrix} \mapsto \alpha^{\pm2}.
\]
Denote by $K\subseteq G$ the maximal compact subgroup $\operatorname{SO}(2)$ of rotation matrices in $G$, and  denote by    $A\subseteq G$ be the  positive diagonal matrices. 
Extend $\pnm{\delta}$ to maps on $G$ via the \emph{Iwasawa decompositions} $G= K A \pnm{N}$:
\begin{equation}
\label{eq-delta-def2}
\qquad \qquad 
\pnm{\delta}(kan) \coloneqq \pnm{\delta}(a) \qquad (k\in K,\ a\in A,\ n\in \pnm{N}).
\end{equation}

\begin{definition} \label{Xi_definition}
The \emph{Harish-Chandra $\Xi$-function} on $G$ is defined by the integral formula
\[
\Xi_G(g) \coloneqq \frac{1}{\vol(K)} \int _K \pnm{\delta} (gk)^{-1/2} \, \dd k.
\]
(Both choices of $\pnm{\delta}$ give the same function $\Xi_G$. The $\Xi$-function does however depend on the choice of maximal compact subgroup $K$.)\end{definition}
 
The most important properties of the $\Xi$-function are that it is a spherical function, 
\begin{equation}
\label{eq-spherical-fn}
\Xi_G(g_1)\Xi_G(g_2) =\frac{1}{\vol(K)}  \int _K \Xi_G(g_1kg_2) \, \dd k ,
\end{equation}
and that it is almost an $L^2$-function.  The latter property is made precise as follows.

\begin{definition}\label{scale_definition}
Denote by $\|\,\,\|:A\to [1,\infty)$ the function 
\[
\left\| \left [\begin{smallmatrix} \alpha & 0 \\ 0 & \alpha^{-1} \end{smallmatrix}\right ] \right\| \coloneqq \max\{\alpha,\alpha^{-1}\}. 
\]
Extend  the norm to a $K$-bi-invariant function $G\to [1,\infty)$ using the Cartan decomposition $G= K A  K$:
\[
\|k_1 a k_2\| \coloneqq \|a\|. 
\]
\end{definition}

\begin{remark}
One has $\log\|g\| = d(K,gK)$ for the standard $G$-invariant Riemannian metric on the Poincar\'e disk $G/K$. It thus follows from the triangle inequality that 
\begin{equation}\label{scale_inequality} 
\|gh\| \leq \|g\|\cdot \|h\| 
\end{equation}
for all $g,h\in G$. For the purposes of defining $\HC(G)$, this choice of norm is only one of several natural options; see \cite[Section 4.2]{Bernstein-Plancherel} and \cite[Section 2.1]{BK}.
\end{remark}

\begin{proposition}
\label{prop-almostL2}
If $t\ge 0$ is sufficiently large, then 
\[
\int_G \Xi_G(g)^2 (1+\log\|g\|)^{-t}\, \dd g < \infty.
\]
\end{proposition}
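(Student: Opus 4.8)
The plan is to exploit the $K$-bi-invariance of $\Xi_G$ to collapse the integral over $G$ onto the positive chamber of $A$, and then insert a sharp pointwise bound for $\Xi_G$ there. Since $\Xi_G$ is a spherical function it is $K$-bi-invariant, and the norm $\|\cdot\|$ is $K$-bi-invariant by Definition~\ref{scale_definition}, so the entire integrand is $K$-bi-invariant. Writing $a_r = \left[\begin{smallmatrix} e^r & 0 \\ 0 & e^{-r}\end{smallmatrix}\right]$ for $r\ge 0$, so that $\log\|a_r\| = r$, the Weyl integration formula for the Cartan decomposition $G = KAK$ reduces the problem to showing that
\[
\int_0^\infty \Xi_G(a_r)^2\,(1+r)^{-t}\,\sinh(2r)\,\dd r < \infty,
\]
since the Jacobian in these coordinates is $\const\cdot\sinh(2r)$; and $\sinh(2r)\le \tfrac12 e^{2r}$.

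Everything then rests on the pointwise bound $\Xi_G(a_r)\le \const\,(1+r)\,e^{-r}$ for $r\ge 0$. To extract this from Definition~\ref{Xi_definition} I would first record the explicit formula $\plus\delta(g) = a^2+c^2$ for $g = \left[\begin{smallmatrix} a & b \\ c & d\end{smallmatrix}\right]\in\SL(2,\R)$: the right-hand side is left-$K$-invariant, right-$\plus N$-invariant, and agrees with $\plus\delta$ on $A$, so by the Iwasawa decomposition $G = KA\plus N$ it equals $\plus\delta$. Parametrising $K$ by the rotation angle $\theta$, this gives $\plus\delta(a_r k_\theta)^{-1/2} = (e^{2r}\cos^2\theta + e^{-2r}\sin^2\theta)^{-1/2}$ and hence
\[
\Xi_G(a_r) = \frac{1}{\vol(K)}\int_0^{2\pi}\bigl(e^{2r}\cos^2\theta + e^{-2r}\sin^2\theta\bigr)^{-1/2}\,\dd\theta.
\]
I would then split the circle into the set where $e^{2r}\cos^2\theta \ge e^{-2r}\sin^2\theta$ and its complement — a pair of arcs of length $O(e^{-2r})$ about $\theta=\pm\pi/2$. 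On the complement the integrand is at most $e^{r}/|\sin\theta|$, which is $O(e^{r})$ there, so that piece contributes $O(e^{-r})$; on the bulk the integrand is at most $e^{-r}/|\cos\theta|$, and $\int_{|\tan\theta|\le e^{2r}}|\cos\theta|^{-1}\,\dd\theta = O(r)$, so the bulk contributes $O(re^{-r})$. This is where the polynomial factor $1+r$ — the familiar logarithmic correction, reflecting that $\Xi_G$ sits exactly at the edge of $L^2(G)$ — comes from. (One could instead quote Harish-Chandra's general estimate $\Xi_G(a)\le \const(1+\log\|a\|)\plus\delta(a)^{-1/2}$.)

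Combining the two steps,
\[
\int_G \Xi_G(g)^2 (1+\log\|g\|)^{-t}\,\dd g \le \const\int_0^\infty (1+r)^2 e^{-2r}(1+r)^{-t}e^{2r}\,\dd r = \const\int_0^\infty (1+r)^{2-t}\,\dd r,
\]
which is finite as soon as $t>3$; this proves the proposition. The one genuinely non-formal ingredient is the pointwise estimate for $\Xi_G(a_r)$ in the middle step — everything else is bookkeeping with standard integration formulas. Note also that the argument is robust: any bound of the form $\Xi_G(a_r)\le\const(1+r)^N e^{-r}$ would suffice, at the cost of requiring $t>2N+1$.
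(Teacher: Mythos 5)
Your argument is correct, and it differs from the paper's in an essential way: the paper does not actually prove Proposition~\ref{prop-almostL2} but cites Wallach's book for the general statement over an arbitrary real reductive group, whereas you give a complete and self-contained computation for $\SL(2,\R)$. The reduction via the $K$-bi-invariance of $\Xi_G$ and $\|\cdot\|$ together with the Cartan-decomposition Jacobian $\sinh(2r)\le\tfrac12e^{2r}$ is right, as is the identification $\plus{\delta}(g)=a^2+c^2$ (the first column of $g=kan$ in the Iwasawa decomposition $G=KA\plus{N}$ has Euclidean length equal to the $(1,1)$-entry of the $A$-part, and the candidate function is plainly left-$K$- and right-$\plus{N}$-invariant and agrees with $\plus{\delta}$ on $A$). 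Your two-regime splitting of the circle integral is exactly the right way to make the Harish-Chandra pointwise estimate $\Xi_G(a_r)\le\const\,(1+r)e^{-r}$ elementary: the arcs where $e^{2r}\cos^2\theta\le e^{-2r}\sin^2\theta$ have length $O(e^{-2r})$ and integrand $O(e^{r})$, contributing $O(e^{-r})$, while on the bulk $\int_{|\tan\theta|\le e^{2r}}\sec\theta\,\dd\theta=O(1+r)$, contributing $O\bigl((1+r)e^{-r}\bigr)$; the final threshold $t>3$ is then exactly what $\int_0^\infty(1+r)^{2-t}\,\dd r<\infty$ requires. What the elementary route buys is transparency about the borderline-$L^2$ behaviour of $\Xi_G$ and the precise role of the logarithmic correction; what the paper's citation buys is generality and economy. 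For a paper confined to $\SL(2,\R)$, the explicit computation is arguably the more natural choice.
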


For a proof, see for instance \cite[Section 4.5]{Wallach1}.

\begin{definition}\label{HC_definition}
The \emph{Harish-Chandra Schwartz algebra} $\HC(G)$ is the space of all smooth, complex-valued functions $f$ on $G$ for which 
\begin{equation}\label{HC_seminorms}
\sup_{g\in G}\frac{| (X  f  Y)(g) |(1+\log\|g\|)^p}{\Xi_G(g)}   <\infty 
\end{equation}
for every $p\geq 0$ and every pair of invariant differential operators $X,Y\in \U(\germ g)$ (the enveloping algebra of the Lie algebra of $G$).
\end{definition}

The seminorms appearing in \eqref{HC_seminorms} make $\HC(G)$ into a nuclear Fr\'echet space.  Proposition~\ref{prop-almostL2} shows that the convolution of two Harish-Chandra functions is defined pointwise, and is a bounded function on $G$. A simple additional argument using \eqref{eq-spherical-fn} shows that the convolution product lies in $\HC(G)$, and that indeed $\HC(G)$ is  a Fr\'echet algebra under convolution. See \cite[Section 7.1]{Wallach1}, and compare also the proof of Lemma~\ref{lem-HC-convolution} below.

Our aim is to study the category of $\SF$-modules over ${\HC(G)}$, but to conclude this section we shall make some remarks concerning the     relationship between $\SF$-modules over $\HC(G)$, which we shall sometimes refer to as \emph{tempered} $\SF$-modules,  and $\SF$-modules over $\Sc(G)$. 

 \begin{lemma}
  The algebra $\Sc(G)$ embeds continuously as a dense subalgebra of $\HC(G)$, and $\HC(G)$ is     a left and right $\SF$-module over  $\Sc(G)$.  
\end{lemma}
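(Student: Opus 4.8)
The plan is to prove the three assertions --- continuity of the inclusion $\Sc(G)\subseteq\HC(G)$, density of its image, and the $\SF$-module property --- in that order, reducing the last one to Propositions~\ref{prop-moderate-growth} and \ref{prop-SF-equals-SF}. For the continuous embedding I would compare the defining seminorms \eqref{Sc_seminorms} and \eqref{HC_seminorms}. Once $\Sc(G)\subseteq\HC(G)$ is known at the level of sets, the inclusion is automatically an injective algebra homomorphism, since both convolution products are given by the same integral and $\Sc(G)$ is already a convolution algebra by hypothesis. The seminorm comparison rests on two observations. First, every $X\in\U(\germ g)$, acting by left- or right-invariant differentiation on functions on $G\subseteq\SL(2,\R)$, is a polynomial differential operator, because the left- and right-invariant vector fields on a linear algebraic group have coefficients that are polynomials in the matrix entries; hence each operator $f\mapsto XfY$ is polynomial, as is $f\mapsto p\cdot(XfY)$ for any polynomial $p$. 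Second, the classical lower bound $\Xi_G(g)\ge c\,\|g\|^{-1}$ for $\SL(2,\R)$, together with $\|g\|^{2}\le\sum_{i,j}g_{ij}^{2}$, shows that $(1+\log\|g\|)^{p}\,\Xi_G(g)^{-1}$ is dominated by a polynomial in the matrix entries of $g$. Combining these, the seminorm \eqref{HC_seminorms} of an arbitrary $f\in\Sc(G)$ is bounded by $\sup_{g\in G}|(Zf)(g)|$ for a suitable polynomial differential operator $Z$, i.e.\ by a seminorm \eqref{Sc_seminorms}, which is precisely the continuity of $\Sc(G)\into\HC(G)$.

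For density it suffices to show that $C_c^{\infty}(G)$, which is visibly contained in $\Sc(G)$, is dense in $\HC(G)$; this is standard, and I would reprove it by truncation. Fix $\psi\in C_c^{\infty}(\R)$ equal to $1$ near $0$ and set $\chi_n(g)=\psi\bigl(n^{-2}\sum_{i,j}g_{ij}^{2}\bigr)\in C_c^{\infty}(G)$, so that $\chi_n$ equals $1$ on a neighbourhood of $\{g:\|g\|\le cn\}$ while the invariant derivatives of $\chi_n$ are bounded uniformly in $n$. Given $f\in\HC(G)$, Leibniz's rule shows that $X\bigl((\chi_n-1)f\bigr)Y$ vanishes where $\chi_n\equiv 1$ and is elsewhere bounded, uniformly in $n$, by a finite sum of functions $|UfV|$ with $U,V\in\U(\germ g)$; since the factor $(1+\log\|g\|)^{-1}$ is at most $(1+\log cn)^{-1}$ on $\{\|g\|>cn\}$, the seminorm \eqref{HC_seminorms} of $\chi_n f-f$ is at most $(1+\log cn)^{-1}$ times a finite sum of seminorms of $f$, and therefore tends to $0$.

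For the $\SF$-module property, I would equip $\HC(G)$ with the left action of $\Sc(G)$ by convolution; this action is continuous because $\Sc(G)\into\HC(G)$ is continuous and $\HC(G)$ is a Fr\'echet algebra. The key step is to verify that the left regular representation $\lambda$ of $G$ on $\HC(G)$ satisfies the conditions in item (b) of Proposition~\ref{prop-moderate-growth}: condition (ii), the identity $f\cdot h=\int_G f(g)\,\lambda(g)h\,\dd g$, is merely the definition of convolution, while the moderate growth (i) and the smoothness (iii) of $\lambda$ on $\HC(G)$ are standard properties of Harish-Chandra's Schwartz space (see \cite[Section~7.1]{Wallach1}). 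One can also check them by hand: moderate growth follows from the submultiplicativity \eqref{scale_inequality}, from the inequality $(1+\log\|gh\|)\le(1+\log\|g\|)(1+\log\|h\|)$, from the bound $\Xi_G(gh)\le C(1+\log\|g\|)\|g\|(1+\log\|h\|)\,\Xi_G(h)$ (a consequence of the size estimates for $\Xi_G$), and from the fact that left translation carries a right-invariant operator $X$ to $\Ad(g^{-1})X$, which again lies in $\U(\germ g)$ and has, relative to a fixed basis, coefficients of polynomial growth in $g$; smoothness holds because differentiating the orbit map $g\mapsto\lambda(g)h$ yields, up to this $\Ad$-twist, the operators $X$ that are already controlled by the seminorms \eqref{HC_seminorms}. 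Granting this, Proposition~\ref{prop-moderate-growth} gives $\Sc(G)\HC(G)=\HC(G)$, and then Proposition~\ref{prop-SF-equals-SF} shows that $\HC(G)$ is an $\SF$-module over $\Sc(G)$; the right-module assertion follows in the same way from the right regular representation, using the evident right-module analogues of Propositions~\ref{prop-moderate-growth} and \ref{prop-SF-equals-SF} (or, equivalently, transporting the left case through the anti-automorphism $f(g)\mapsto f(g^{-1})$ of $\HC(G)$). The hard part, and the only genuinely non-formal ingredient, is exactly this last verification: it needs the classical lower bound $\Xi_G(g)\ge c\,\|g\|^{-1}$, the translation estimate for the $\Xi$-function, and the smoothness and moderate growth of the regular representation on $\HC(G)$. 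This is where I would either invoke Wallach's treatment of $\HC(G)$ or reproduce the (elementary, if somewhat lengthy) estimates special to $\SL(2,\R)$.
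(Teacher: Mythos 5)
Your proposal is correct and takes essentially the same route as the paper, which cites \cite[Theorem 7.1.1]{Wallach1} for the dense continuous embedding and reduces the $\SF$-module claim to moderate growth and smoothness of the regular representations on $\HC(G)$, then invokes Propositions~\ref{prop-moderate-growth} and~\ref{prop-SF-equals-SF}. You simply supply the seminorm comparison, truncation, and translation estimates that the paper delegates to Wallach or declares ``easy to show directly.''
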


\begin{proof}
For the first statement see for example   \cite[Theorem 7.1.1]{Wallach1}. 
It is easy to show directly that $\HC(G)$ is a smooth representation of moderate growth under left or right translation, so the second statement follows from Propositions~\ref{prop-moderate-growth} and \ref{prop-SF-equals-SF}.
\end{proof}

\begin{proposition} 
The restriction to $\Sc(G)$ of any $\SF$-module over $\HC(G)$ is an $\SF$-module over $\Sc(G)$.  
\end{proposition}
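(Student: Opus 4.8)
The restriction to $\Sc(G)$ of an $\SF$-module $V$ over $\HC(G)$ is, at any rate, a Fréchet space carrying a continuous action of $\Sc(G)$: one precomposes the action $\HC(G)\otimes V\to V$ with the continuous inclusion $\Sc(G)\into\HC(G)$ furnished by the preceding Lemma, obtaining a separately --- hence jointly --- continuous action. By the definition of an $\SF$-module, the entire content of the Proposition is therefore that the canonical map $\Sc(G)\otimes_{\Sc(G)}V\to V$ induced by this action is an isomorphism; and, by Proposition~\ref{prop-SF-equals-SF}, this is in turn equivalent to $\Sc(G)V=V$. The point to appreciate is that such a statement fails for restriction along an arbitrary dense subalgebra; here it is forced by the fact, recorded in the preceding Lemma, that $\HC(G)$ is \emph{itself} an $\SF$-module over $\Sc(G)$.

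The plan is to prove it by a short manipulation of relative tensor products. Regard $\HC(G)$ as a $\bigl(\Sc(G),\HC(G)\bigr)$-bimodule, the left $\Sc(G)$-action being convolution via $\Sc(G)\into\HC(G)$, which commutes with right convolution by $\HC(G)$ because convolution on $\HC(G)$ is associative and $\Sc(G)$ is a subalgebra. Then, invoking the hypothesis that $V$ is an $\SF$-module over $\HC(G)$ together with the preceding Lemma that $\HC(G)$ is a left $\SF$-module over $\Sc(G)$, one obtains natural isomorphisms
\[
\Sc(G)\otimes_{\Sc(G)}V \;\cong\; \Sc(G)\otimes_{\Sc(G)}\HC(G)\otimes_{\HC(G)}V \;\cong\; \HC(G)\otimes_{\HC(G)}V \;\cong\; V ,
\]
in which the first and third isomorphisms come from the hypothesis, the second from the preceding Lemma, and the iterated tensor product in the middle is unambiguous by associativity of the relative tensor product. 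A routine element chase --- tracking $f\otimes v$ through a representative $\sum_i h_i\otimes v_i$ of $v$ (so that $\sum_i h_i\cdot v_i=v$) to $\sum_i f\otimes h_i\otimes v_i$, then to $\sum_i (f h_i)\otimes v_i$, and finally to $\sum_i (f h_i)\cdot v_i = \sum_i f\cdot(h_i\cdot v_i) = f\cdot v$, using associativity of the $\HC(G)$-action and continuity of the operator $f\cdot(\,\cdot\,)$ --- shows that the composite isomorphism is exactly the action map, so that $V$ is an $\SF$-module over $\Sc(G)$.

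The step demanding genuine care, and the one I expect to be the main obstacle, is the associativity of the relative completed projective tensor product: forming $E\otimes_{\mathcal A}F$ and then $(\,\cdot\,)\otimes_{\mathcal B}W$ must agree, canonically, with forming $F\otimes_{\mathcal B}W$ first. This rests on the standard fact that the projective tensor product of Fréchet spaces carries a quotient $E/\overline{E_0}$ by a closed subspace to $(E\otimes W)/\overline{E_0\otimes W}$; granting this, both bracketings are identified with the quotient of $\Sc(G)\otimes\HC(G)\otimes V$ by the closed span of all relators $f'f\otimes h\otimes v - f'\otimes fh\otimes v$ and $f\otimes h'h\otimes v - f\otimes h'\otimes hv$, and all spaces in sight are nuclear Fréchet, so no ambiguity of tensor product arises. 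If one prefers to sidestep the bookkeeping about \emph{which} isomorphism has been produced, one can instead check $\Sc(G)V=V$ head-on: surjectivity of $\HC(G)\otimes_{\HC(G)}V\to V$ writes each $v\in V$ as a convergent sum $\sum_i\lambda_i\, h_i\cdot v_i$ with $h_i\in\HC(G)$, surjectivity of $\Sc(G)\otimes_{\Sc(G)}\HC(G)\to\HC(G)$ rewrites each $h_i$ as a convergent sum $\sum_j\mu_{ij}\, f_{ij}* g_{ij}$ with $f_{ij}\in\Sc(G)$, and then $v = \sum_{i,j}\lambda_i\mu_{ij}\, f_{ij}\cdot(g_{ij}\cdot v_i)$ displays $v$ as an element of $\Sc(G)V$ --- the only point needing attention being the familiar Grothendieck-type estimate guaranteeing that the iterated sum again represents an element of $\Sc(G)\otimes V$ --- after which Proposition~\ref{prop-SF-equals-SF} concludes.
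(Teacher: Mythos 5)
Your argument is essentially identical to the paper's: both prove the claim via the chain of natural isomorphisms $\Sc(G)\otimes_{\Sc(G)}V \cong \Sc(G)\otimes_{\Sc(G)}\HC(G)\otimes_{\HC(G)}V \cong \HC(G)\otimes_{\HC(G)}V \cong V$, using that $\HC(G)$ is an $\SF$-module over $\Sc(G)$ and that $V$ is an $\SF$-module over $\HC(G)$. The extra discussion you supply (the element chase, the associativity of relative tensor products, the alternative check that $\Sc(G)V=V$) is sound elaboration but not a different route.
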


\begin{proof}
If $V$ is an $\SF$-module over $\HC(G)$, then 
\[
V  =  \HC(G)\otimes_{\HC(G)}V = \Sc(G)\otimes_{\Sc(G)} \HC(G)\otimes_{\HC(G)} V  = \Sc(G)\otimes_{\Sc(G)}V,
\]
as required.
\end{proof}

  In the reverse direction, we have the following result: 

\begin{proposition}
If $V$ is an $\SF$-module over $\Sc(G)$, and if the action of $\Sc(G)$ extends continuously to $\HC(G)$, then $V$ is an $\SF$-module over $\HC(G)$.  
\end{proposition}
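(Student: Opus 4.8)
The plan is to show that the canonical map $\HC(G)\otimes_{\HC(G)}V\to V$ is an isomorphism by factoring it through the already-established isomorphism $\Sc(G)\otimes_{\Sc(G)}V\to V$. The key observation is that, since $\Sc(G)$ is a dense subalgebra of $\HC(G)$ and $V$ carries a continuous $\HC(G)$-action, the module action of $\HC(G)$ on $V$ is determined by that of $\Sc(G)$; and since $\Sc(G)$ itself is an $\SF$-module over $\Sc(G)$ and a subalgebra of $\HC(G)$, we expect a natural identification $\HC(G)\otimes_{\HC(G)}V \cong \Sc(G)\otimes_{\Sc(G)}V$ arising from the chain of maps
\[
\Sc(G)\otimes_{\Sc(G)} V \longrightarrow \HC(G)\otimes_{\HC(G)} V \longrightarrow V,
\]
the second of which is the natural map we want to invert and the composite of which is the isomorphism of Proposition~\ref{prop-SF-equals-SF}.

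First I would make precise the first arrow: the inclusion $\Sc(G)\into\HC(G)$ together with the identity on $V$ induces a continuous map $\Sc(G)\otimes V\to \HC(G)\otimes V$ which carries the relators $fa\otimes v - f\otimes av$ (for $f\in\Sc(G)$, $a\in\Sc(G)$, $v\in V$) into the span of the relators for $\HC(G)\otimes_{\HC(G)}V$, hence descends to a continuous map on the balanced tensor products. Composing with the natural map $\HC(G)\otimes_{\HC(G)}V\to V$ gives exactly the action map $\Sc(G)\otimes_{\Sc(G)}V\to V$, which is an isomorphism by Proposition~\ref{prop-SF-equals-SF} since $\Sc(G)V=V$ (this last equality holds because $\HC(G)V=V$ and $\Sc(G)$ is dense in $\HC(G)$, so $\Sc(G)V$ is a dense submodule; here one uses that the $\Sc(G)$-action on $V$, being the restriction of a smooth moderate-growth $G$-action, makes $V$ an $\SF$-module over $\Sc(G)$, which is what the hypothesis already grants us). It follows formally that $\HC(G)\otimes_{\HC(G)}V\to V$ is a continuous linear surjection with a continuous linear right inverse.

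It remains to show that this surjection is also injective, equivalently that the first arrow $\Sc(G)\otimes_{\Sc(G)}V\to\HC(G)\otimes_{\HC(G)}V$ is surjective with dense image and that the second arrow is injective; the cleanest route is to exhibit an explicit inverse. The idea is that the chain-contraction argument of Proposition~\ref{prop-SF-equals-SF} applies verbatim with $\Sc(G)$ replaced by $\HC(G)$: the $b'$-complex
\[
\cdots\longrightarrow \HC(G)\otimes\HC(G)\otimes V\longrightarrow \HC(G)\otimes V\longrightarrow V
\]
is contractible, because one can use the same formula $f\mapsto[(g_0,\dots,g_p)\mapsto u(g_0)f(g_0g_1,\dots,g_p)]$ with $u\in C_c^\infty(G)\subseteq\Sc(G)\subseteq\HC(G)$ a function of total integral $1$ — the only thing to check is that convolution-type manipulations and the identification of $\HC(G)\otimes\cdots\otimes\HC(G)\otimes V$ with a space of $V$-valued functions in the Harish-Chandra Schwartz class on $G\times\cdots\times G$ still go through, which follows from nuclearity of $\HC(G)$ together with the boundedness estimates underlying Definition~\ref{HC_definition} and Proposition~\ref{prop-almostL2}. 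Granting that, the bottom of the contraction gives $\HC(G)\otimes_{\HC(G)}V\xrightarrow{\ \cong\ }V$ directly.

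The main obstacle, I expect, is the technical point in the last step: verifying that the contracting homotopy is well-defined and continuous at the level of $V$-valued Harish-Chandra Schwartz functions, i.e.\ that the operation $f(g_0g_1,\dots,g_p)\mapsto u(g_0)f(g_0g_1,\dots,g_p)$ sends the relevant weighted-sup seminorms to finite values. This is exactly the analogue, in the $\HC(G)$-setting, of the routine but not entirely trivial estimate in the proof of Proposition~\ref{prop-SF-equals-SF}; it will rely on the submultiplicativity $\|gh\|\le\|g\|\cdot\|h\|$ of \eqref{scale_inequality}, the spherical-function inequality \eqref{eq-spherical-fn}, and the compact support of $u$. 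Once this estimate is in hand, everything else is formal, and in fact the simplest exposition may be to observe that the argument of Proposition~\ref{prop-SF-equals-SF} was never special to $\Sc(G)$: any nuclear Fréchet convolution algebra of functions on $G$ containing $C_c^\infty(G)$, on which the translation action has the appropriate growth, admits the same chain contraction, so the proposition is an immediate instance of that general principle applied to $\HC(G)$ and the module $V$ (whose $\HC(G)$-action is smooth of the required type because its restriction to $\Sc(G)$ already is).
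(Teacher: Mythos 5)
Your setup is the same as the paper's: you consider the composition
\[
\Sc(G)\otimes_{\Sc(G)}V\longrightarrow\HC(G)\otimes_{\HC(G)}V\longrightarrow V,
\]
observe that it is the isomorphism supplied by Proposition~\ref{prop-SF-equals-SF}, and deduce that the second arrow is a surjection with a continuous right inverse. At that point, though, you part ways with the paper. You propose to finish by re-running the $b'$-complex contraction of Proposition~\ref{prop-SF-equals-SF} with $\HC(G)$ in place of $\Sc(G)$, which forces you to justify that the contracting homotopy $f\mapsto[(g_0,\dots,g_p)\mapsto u(g_0)f(g_0g_1,\dots,g_p)]$ is continuous for the Harish-Chandra seminorms. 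That is a genuine extra burden: you need translation estimates of the form $\Xi_G(g_0g)\leq C_\Omega\,\Xi_G(g)$ and $(1+\log\|g_0g\|)^{-q}\leq C_\Omega'(1+\log\|g\|)^{-q}$ uniformly for $g_0$ in a compact set $\Omega$, and an identification of $\HC(G)\otimes\cdots\otimes\HC(G)\otimes V$ with a space of $V$-valued Harish-Chandra functions on $G\times\cdots\times G$. These are true and standard, but you only assert them. The paper avoids all of this with a short functional-analytic argument: since the composite is an isomorphism, the first arrow is a \emph{split} injection (it admits a continuous left inverse), so its image is a closed, complemented subspace of $\HC(G)\otimes_{\HC(G)}V$; that image is also dense, because $\Sc(G)$ is dense in $\HC(G)$; closed plus dense forces the first arrow to be surjective, hence a topological isomorphism (open mapping/closed graph theorem), whence the second arrow is an isomorphism too. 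Your route would give a correct proof once the seminorm estimates are carried out, but you gain nothing over the paper's argument, which requires no estimates at all and exploits the split-injection structure that you already have in hand after the first paragraph.

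Two minor points of hygiene: the sentence ``equivalently that the first arrow is surjective with dense image and that the second arrow is injective'' is muddled — surjectivity already implies dense image, and what you actually need is the bijectivity of the first arrow; and the observation that the surjection has a right inverse does not, by itself, give injectivity, so the ``it remains to show'' step really is a nontrivial second half, which is precisely where the paper's split-injection observation does the work.
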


\begin{proof}
The composition
\[
\Sc(G)\otimes _{\Sc(G)} V \longrightarrow \HC(G)\otimes _{\HC(G)} V \longrightarrow V 
\]
is an isomorphism, and so the left-hand map is a split injection.  It follows that its range is closed.  But  the range is dense, so the left-hand map is surjective too.  Hence  it is an isomorphism by the closed graph theorem, and the lemma follows.
\end{proof}

Finally, a comment on notation regarding tensor products  that we shall use from now on:

\begin{notation} 
\label{rem-balanced-tp-notation}
Assume that $V$ and $W$ are left  and right $\SF$-modules over $\Sc(G)$, respectively.  The balanced tensor product $W\otimes _{\Sc(G)} V$ does not depend on the particular details of the definition of $\Sc(G)$, since to form the balanced tensor product we need only form the quotient by the closed span of the relators $wf \otimes v - w \otimes fv$ with $f\in C_c^\infty (G)$.  
 With this in mind, and to streamline notation a little, we shall   write $W\otimes _G V$ for the balanced tensor product of $\Sc(G)$-modules.  We shall use the same notation in the $\HC(G)$-module case, and it will be important to note that in this case the $\Sc(G)$- and $\HC(G)$-module balanced tensor products are the same.
 \end{notation}
 
\section{Parabolic induction and restriction}
\label{sec-parabolic}

In this section we shall define parabolic induction and parabolic restriction of $\SF$-modules, in both the general and tempered contexts.

We continue with the notation established in the last section with regard to $G=\SL(2,\R)$ and its subgroups, except that throughout this section $N$ will denote either $\plus{N}$ or $\op{N}$, and $P$ will denote the corresponding parabolic subgroup. All of the results of this section are in fact valid for an arbitrary real reductive group $G$, where the notation $K$, $A$, etc. is given its customary meaning, as explained in \cite[Chapter 2]{Wallach1}, for example.

The homogeneous space $G/N$ is a nonsingular real algebraic variety, and hence a Nash manifold, so it possesses its own  space of Schwartz functions $\Sc(G/N)$.  We let $G$ act on $\Sc(G/N)$ in the standard way, by left translation:
\begin{equation}\label{eq-left-G-action}
(g{\cdot}h)(x) =  h(g^{-1}x) .
\end{equation}
We let the diagonal subgroup $L$ act by right translation, but shifted by the quasicharacter  $\delta$ of \eqref{delta_equation}:
\begin{equation}
\label{eq-right-L-action}
(h {\cdot} \ell) (x) = \delta(\ell)^{-1/2} h(x\ell^{-1}).
\end{equation}
Here $\delta=\pnm{\delta}$ if $N=\pnm{N}$. One explanation for the appearance of $\delta$ in the formula for the  action  is that with the $\delta$-factor   the action is unitary for the $L^2$-inner product associated to the $G$-invariant measure on $G/N$.

\begin{remark}
\label{rem-GmodN-space}
If $G= \SL(2,\R)$ and if say $N=\op{N}$, then the homogeneous space $G/N$ may be identified with the complement of the origin in  $\R^2$ via the map that sends the coset $gN$ to the second column  of the matrix $g$. Under this identification the left action of $G$ on the homogeneous space is through matrix multiplication, while the right action of $L$ is through scalar multiplication by the $(2,2)$-entry of $\ell\in L$.  The space  $\Sc(G/N)$ gets identified in this way with the usual space of Schwartz functions on $\R^2$  that  vanish to all orders at the origin. See \cite[Theorem 5.4.3]{AizGour}.    Similar observations apply to $N = \plus{N}$, of course.
\end{remark}

An application of Proposition~\ref{prop-SF-equals-SF} gives: 

\begin{lemma}
The $G$- and $L$-actions defined above give $\Sc(G/N)$ the structure of a left $\SF$-module over $\Sc(G)$ and a right $\SF$-module over $\Sc(L)$. \qed
\end{lemma}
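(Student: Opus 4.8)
The plan is to reduce the statement, for each of the two actions, to a verification of the hypotheses of Propositions~\ref{prop-moderate-growth} and~\ref{prop-SF-equals-SF}. These propositions are stated for $G=\SL(2,\R)$, but they hold verbatim for $L$ as well, since $L$ is itself a real reductive group in the sense of Section~\ref{sec-categories}. Concretely, it suffices to check that the left $G$-action \eqref{eq-left-G-action} and the twisted right $L$-action \eqref{eq-right-L-action} on $\Sc(G/N)$ are continuous, smooth representations of moderate growth. Granting this, the integral $f\cdot h=\int_G f(g)\,g{\cdot}h\,\dd g$ converges (by moderate growth) and defines a continuous $\Sc(G)$-module structure on $\Sc(G/N)$; condition (b) of Proposition~\ref{prop-moderate-growth} then holds by construction, so condition (a) holds, i.e.\ $\Sc(G)\Sc(G/N)=\Sc(G/N)$; and Proposition~\ref{prop-SF-equals-SF} upgrades this to an $\SF$-module structure. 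The same four lines, with $\Sc(G)$ replaced by $\Sc(L)$, handle the right $L$-module structure.

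Smoothness and continuity of the two actions I would obtain from the general theory of Schwartz functions on Nash manifolds: left translation by $G$, and right translation by $L$ (which is well defined on $G/N$ because $L$ normalises $N$), are Nash actions on the Nash manifold $G/N$, and a Nash group action induces a continuous, smooth representation on the associated Schwartz space; the factor $\delta^{-1/2}$ in \eqref{eq-right-L-action} is a smooth positive character of $L$ and does not affect this. Alternatively, and more concretely, one may use Remark~\ref{rem-GmodN-space}: $\Sc(G/N)$ is a closed subspace of $\Sc(\R^2)$, both actions are restrictions of (a twist of) the linear action of $\GL(2,\R)$ on $\Sc(\R^2)$, and smoothness and continuity of that action are classical.

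For moderate growth I would work in the model of Remark~\ref{rem-GmodN-space}, where the topology on $\Sc(G/N)$ is given by the seminorms $p(h)=\sup_{v\in\R^2}|v^{\alpha}(\partial^{\beta}h)(v)|$ for multi-indices $\alpha,\beta$. For $g\in G$, applying the chain rule to $g{\cdot}h=h\circ g^{-1}$ and then substituting $w=g^{-1}v$ gives a bound $p(g{\cdot}h)\le C\|g\|^{|\alpha|+|\beta|}q(h)$, where $q$ is another seminorm of the same type and $\|g\|$ denotes the operator norm of $g$ (which coincides with the norm of Definition~\ref{scale_definition}, since $\|g^{-1}\|=\|g\|$ on $\SL(2,\R)$); the estimate works because the entries of $g^{-1}$ are, up to sign, the entries of $g$, and $\|g\|\le 1+\sum_{i,j}g_{ij}^{2}$ is polynomial in the matrix entries. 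The computation for $L$ is the same but shorter: for $\ell=\diag(\alpha,\alpha^{-1})$ the action \eqref{eq-right-L-action} rescales the argument and multiplies the function by $\delta(\ell)^{-1/2}=\alpha^{\mp1}$, so $p(h{\cdot}\ell)$ is bounded by a fixed power of $\max\{\alpha,\alpha^{-1}\}$ times a seminorm of $h$, again a polynomial in the entries of $\ell$.

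I do not expect any step to present a serious obstacle: the mathematical content is entirely in the two cited propositions, and what remains is routine bookkeeping. The only place where $G=\SL(2,\R)$ is really used (rather than a general reductive group) is the explicit description of $\Sc(G/N)$ in Remark~\ref{rem-GmodN-space}, which is what makes the moderate-growth estimates transparent; in the general case one would instead estimate using a Nash embedding of $G/N$ into affine space, which is more cumbersome but causes no difficulty of principle.
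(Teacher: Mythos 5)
Your proposal is correct and follows exactly the route the paper intends: the paper states the lemma with a terminal \qed\ immediately after the sentence ``An application of Proposition~\ref{prop-SF-equals-SF} gives,'' leaving the verification that the $G$- and $L$-actions are continuous, smooth and of moderate growth (so that Propositions~\ref{prop-moderate-growth} and~\ref{prop-SF-equals-SF} apply) to the reader. You have simply spelled out that verification, correctly, using the concrete model of $\Sc(G/N)$ from Remark~\ref{rem-GmodN-space}.
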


\begin{remark}
Under the obvious identification of $L$ with $\R^\times$ via the $(1,1)$-matrix entry, the space $\Sc(L)$ gets identified with the space of usual Schwartz functions on $\R$ which vanish to all orders at $0$.
\end{remark}

\begin{definition}\label{Ind_definition}
The functor of \emph{parabolic induction}
\[
\Ind_P^G \colon \SFMod{\Sc(L)} \longrightarrow \SFMod{\Sc(G)}
\]
is the tensor product functor 
\[
\Ind_P^G \colon  V \longmapsto  \Sc(G/N) \otimes _{L} V  .
\]
\end{definition}

This is connected to the  more familiar definition of parabolic induction in the following  way: 

\begin{proposition}\label{Ind_ind_proposition}
The above functor   of parabolic induction  from $L$ to $G$ is naturally isomorphic to the  functor from $\SF$-representations of $L$ to $\SF$-representations of $G$ that associates to   $\pi\colon L \to \Aut (V)$   the representation 
\begin{equation*}
\bigl \{\, \phi \colon G\to V \,:\, \text{$\phi$ is smooth and  $\phi(g\ell n) =  \delta(\ell )^{-1/2 }\pi (\ell)^{-1} \phi(g)$} \, \bigr  \}   ,
\end{equation*}
on which the action of $G$   is by left translation.
\end{proposition}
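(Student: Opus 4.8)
The plan is to exhibit a natural isomorphism between the tensor-product construction $\Sc(G/N)\otimes_L V$ and the space of smooth vector-valued functions $\phi\colon G\to V$ satisfying the equivariance $\phi(g\ell n) = \delta(\ell)^{-1/2}\pi(\ell)^{-1}\phi(g)$. Call the latter space $\ind_P^G V$, equipped with the left-translation $G$-action. First I would check that $\ind_P^G V$ is genuinely an object of $\SFMod{\Sc(G)}$: it is a closed subspace of the Fréchet space $C^\infty(G,V)$, hence Fréchet; the $G$-action by left translation has moderate growth because the equivariance condition lets one control $\phi$ on all of $G$ in terms of its restriction to the compact set $K$ (using the Iwasawa decomposition $G=KA N$), and the growth of the $\delta^{-1/2}\pi(\ell)^{-1}$ factor along $A$ is polynomial in $\log\|g\|$ since $\pi$ has moderate growth; smoothness of orbit maps is clear; and then Propositions~\ref{prop-moderate-growth} and \ref{prop-SF-equals-SF} (together with a Dixmier--Malliavin-type argument to see $\Sc(G)\cdot\ind_P^G V = \ind_P^G V$) give the $\SF$-property. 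One must also verify functoriality: a morphism $V\to V'$ of $\SF$-modules over $\Sc(L)$ induces a morphism $\ind_P^G V\to \ind_P^G V'$ by postcomposition, visibly natural.

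Next I would construct the comparison map. Starting from $\Sc(G/N)\otimes V$, define $\Psi\colon h\otimes v \mapsto \phi_{h\otimes v}$ where $\phi_{h\otimes v}(g) := \int_L h(g\ell N)\,\delta(\ell)^{1/2}\,\pi(\ell)v\; \dd^\times \ell$, an integral over $L$ that converges because $h\in\Sc(G/N)$ decays rapidly in the $L$-direction while $\pi(\ell)v$ and $\delta(\ell)^{1/2}$ grow at most polynomially/exponentially in a controlled way — here one should be a little careful and may instead define $\Psi$ on the balanced tensor product directly, or observe that the integral converges after passing to the quotient. A direct change of variables shows $\phi_{h\otimes v}$ satisfies the required equivariance and that $\Psi$ intertwines the left $G$-actions; and $\Psi(hf\otimes v) = \Psi(h\otimes fv)$ for $f\in C_c^\infty(L)$, so $\Psi$ descends to a continuous $G$-equivariant map $\overline\Psi\colon \Sc(G/N)\otimes_L V\to \ind_P^G V$.

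To show $\overline\Psi$ is an isomorphism I would produce an inverse. Given $\phi\in\ind_P^G V$, restrict to $K$; since $G/P \cong K/(K\cap L)$ is compact, choose a smooth cutoff or use a partition-of-unity/averaging argument over $K$ to write $\phi$ as a finite sum (or an integral) of elements of the form $\phi_{h\otimes v}$: concretely, pick $u\in C_c^\infty(L)$ (or $C_c^\infty(G)$) with suitable total integral and map $\phi\mapsto \sum$ (over a partition of unity on $K$) of simple tensors built from $\phi$ and $u$, mimicking the chain-contraction trick used in the proof of Proposition~\ref{prop-SF-equals-SF}. Then check that this is a two-sided inverse to $\overline\Psi$ modulo the balancing relations, and that it is continuous; the closed graph theorem finishes the argument if injectivity and surjectivity are established separately. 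Finally, naturality of $\overline\Psi$ in $V$ is immediate from the formulas. The main obstacle I anticipate is the surjectivity/inverse step: showing that every equivariant $\phi$ actually lies in the \emph{closed} span of the simple tensors $\phi_{h\otimes v}$ — i.e.\ that no completion is lost — which is exactly where one needs the $\SF$-module hypothesis on $V$ and an honest smoothing argument (Dixmier--Malliavin or the explicit contraction operator), rather than a purely formal manipulation; controlling the convergence of the defining $L$-integral against the moderate growth of $\pi$ is the other technical point that needs care.
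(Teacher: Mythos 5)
Your proposal follows essentially the same route as the paper: your comparison map $\Psi$ is identical to the paper's $\alpha$ (an integral over $L$ against $\delta(\ell)^{1/2}\pi(\ell)v$), and the inverse is produced by the same kind of explicit chain contraction as in the proof of Proposition~\ref{prop-SF-equals-SF}, which is where you correctly locate the real work. The one thing you leave vague is that the paper does not need a partition of unity over $K$ or a closed-graph fallback: it picks a single cutoff $\chi\in C_c^\infty(G/N)$ with integral $1$ over each right $L$-orbit, defines $\sigma\colon\phi\mapsto\phi\cdot\chi$ valued in $\Sc(G/N,V)\cong\Sc(G/N)\otimes V$ (a completed tensor, not a finite sum of simple tensors), and writes a homotopy $\tau$ with $\alpha\sigma=\id$ and $\sigma\alpha+\beta\tau=\id$, which at once gives surjectivity and identifies $\ker\alpha$ with the closed span of balancing relators.
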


\begin{proof}
Denote by  $\ind_P^G V$   the space  described in the statement of the lemma.
The map 
\[
\alpha\colon  \Sc(G/N)\otimes V \longrightarrow 
\ind_P^G  V
\]
that sends $h\otimes v \in \Sc(G/N)\otimes V$ to the function 
\[
g \longmapsto \int _L h (g\ell N ) \delta (\ell)^{1/2} \pi( \ell)v \, d\ell
\]
induces  a continuous, $\Sc(G)$-module homomorphism
\[
\Sc(G/N) \otimes _{L} V \longrightarrow \ind_P^G V.
\]
Fix    a smooth, compactly supported function   $\chi$ on $G/N$ with integral $1$ over each right $L$-orbit, and define 
\[
\sigma \colon  
\ind_P^G V
\longrightarrow 
 \Sc(G/N)\otimes V 
\]
by mapping $\phi \in\ind_P^G V $ to  the $V$-valued function 
\[
gN \mapsto \phi (gN)\chi(gN),
\]
which lies in $\Sc(G/N,V) \cong \Sc(G/N)\otimes V$.   In addition, define 
\[
\tau \colon  \Sc(G/N)\otimes V
\longrightarrow
 \Sc(G/N)\otimes \Sc(L) \otimes V 
 \]
 by means of the formula
 \[
 h \otimes v \longmapsto \bigl [ \strut(x,\ell) \mapsto \chi(x)\delta(\ell)^{1/2} h (x\ell) v \bigr ]
 \]
(regarding these formulas, compare the proof of Proposition~\ref{prop-SF-equals-SF}). 
We compute that  $\alpha\circ \sigma = \mathrm{id}$, and if
\[
\beta \colon \Sc(G/N)\otimes \Sc(L) \otimes V \longrightarrow \Sc(G/N)\otimes V
\]
is the balancing homomorphism 
\[
h\otimes f \otimes v \longmapsto hf\otimes v - h\otimes fv, 
\]
then 
 \[
\beta \circ \tau + \sigma \circ \alpha = \mathrm{id} \colon  \Sc(G/N)\otimes V \longrightarrow  \Sc(G/N)\otimes V .
\] 
The proof follows from this.
\end{proof}

Alongside parabolic induction we shall also study \emph{parabolic restriction}, which is defined in the context of $\SF$-modules as follows.  Begin with the right  homogeneous space $N\backslash G$. The real-algebraic structure on $N\backslash G$ yields a space of Schwartz functions $\Sc(N\backslash G)$, which becomes  a right $\SF$-module over $\Sc(G)$ under the $G$-action
\[
(h\cdot g) (y) = h(yg^{-1}) 
\]
 and a   left $\SF$-module over $\Sc(L)$under the shifted (unitary)  $L$-action 
 \[
( \ell\cdot  h) (y) = \delta(\ell)^{1/2} h(\ell^{-1}y). 
 \]

\begin{definition}
\label{def-parabolic-restriction}
The functor of \emph{parabolic restriction} 
\[
\Res_P^G : \SFMod{\Sc(G)} \longrightarrow \SFMod{\Sc(L)}
\]
is defined by $X \mapsto \Sc(N\backslash G)\otimes_{G} X$. \end{definition}

The definition will be justified by the reciprocity theorems to be proved in the coming sections.   
   For now, we   turn to parabolic induction in the tempered context.

\begin{definition} \label{Xi_GmodN_definition}
For $G=\SL(2,\R)$, and $N=\pnm{N}$ either the upper- or the lower-triangular unipotent subgroup, let 
\[
\Xi_{G/N}:G/N\longrightarrow  \R^+
\]
be the function
\[
\Xi_{G/N}(x) =  \delta(x)^{-1/2}
\]
where $\delta=\pnm{\delta}$ if $N=\pnm{N}$ (recall from \eqref{eq-delta-def2} that $\delta$ was extended from $L$ to $G$ as a function that is in particular right-$N$-invariant; so $\delta$ descends to the homogeneous space $G/N$).
\end{definition}

\begin{lemma}\label{Xi_GmodN_spherical_lemma}
For every $g\in G$ and $x\in G/N$ one has
\[
\frac{1}{\vol(K)} \int_{K} \Xi_{G/N}(gkx)\, \dd k = \Xi_G(g)\Xi_{G/N}(x). 
\]
\end{lemma}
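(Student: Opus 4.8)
The plan is to reduce the identity to the spherical-function property of $\Xi_G$ recorded in \eqref{eq-spherical-fn}, using the Iwasawa decomposition to unwind $\Xi_{G/N}$. Recall that $\Xi_{G/N}(x) = \delta(x)^{-1/2}$ and that, by Definition~\ref{Xi_definition}, $\Xi_G(g) = \vol(K)^{-1}\int_K \delta(gk')^{-1/2}\,\dd k'$, where $\delta = \pnm{\delta}$ and the extension \eqref{eq-delta-def2} of $\delta$ to $G$ is right-$N$-invariant (and in particular right-$K$-invariant only through the Iwasawa $K$-factor, so one must be careful). First I would write $x = g'N$ for some $g'\in G$, so that $\Xi_{G/N}(gkx) = \delta(gkg')^{-1/2}$, and the left-hand side becomes
\[
\frac{1}{\vol(K)}\int_K \delta(gkg')^{-1/2}\,\dd k.
\]
The goal is to identify this with $\Xi_G(g)\,\delta(g')^{-1/2}$.

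The key step is to insert an extra integration over $K$ using the left-$K$-invariance of $\Xi_G$ and the right-$K$-behaviour of $\delta$. Concretely, I would use the cocycle-type relation for $\delta$ under the Iwasawa decomposition: writing $kg' = k'' a'' n''$ in $KAN$, one has $\delta(gkg') = \delta(gk''a''n'')$, and since $\delta$ is right-$N$-invariant and $\delta(ha) = \delta(h)\delta(a)$ for $a\in A$ (because $a$ normalizes $N$ and $A\subseteq P$), this equals $\delta(gk'')\delta(a'')$, where $\delta(a'') = \delta(a''n'') = \delta(kg')$. Hence $\delta(gkg') = \delta(gk'')\,\delta(kg')$, where $k''$ is the Iwasawa $K$-part of $kg'$. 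This is precisely the multiplicativity underlying \eqref{eq-spherical-fn}. Substituting and using that $k\mapsto k''$ together with the Iwasawa decomposition lets one replace $\int_K F(gk'')\,\dd k$ — I would verify that the pushforward of Haar measure on $K$ under $k\mapsto k''$, weighted appropriately, reproduces the defining integral of $\Xi_G(g)$ after also integrating $\delta(kg')^{-1/2}$ over $k$. In fact the cleanest route is: by \eqref{eq-spherical-fn} applied with $g_1 = g$ and an auxiliary element, or more directly by observing that $\vol(K)^{-1}\int_K \delta(gkg')^{-1/2}\,\dd k$ depends on $g'$ only through $\delta(g')^{-1/2}$ (replace $g'$ by $ag'n$ and use right-$N$-invariance plus $\delta(gkg'n) = \delta(gkg')$, and for the $A$-part split off $\delta(a)^{-1/2}$ after absorbing $a$ into the $K$-integral via $kg'a = k\cdot(g'ag'^{-1})\cdot g'$... ) — so it suffices to evaluate at $g' = e$, giving exactly $\Xi_G(g)$, and then multiply by the overall factor $\delta(g')^{-1/2} = \Xi_{G/N}(x)$.

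The main obstacle, and the step requiring genuine care rather than formal manipulation, is the bookkeeping of the Iwasawa cocycle: making precise that $\delta(gkg')^{-1/2}$ factors as $\delta(gk'')^{-1/2}\delta(kg')^{-1/2}$ with the correct identification of $k''$, and that integrating over $k\in K$ with Haar measure correctly reproduces the $K$-average defining $\Xi_G(g)$ without an extraneous Jacobian. I expect this to go through because it is exactly the computation that proves $\Xi_G$ is spherical, i.e. \eqref{eq-spherical-fn}, transplanted to the homogeneous space $G/N$; indeed, once the factorization of $\delta$ is in hand, the identity follows by applying \eqref{eq-spherical-fn} (with a unit approximation or directly) and the definition $\Xi_{G/N} = \delta^{-1/2}$. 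I would present the argument in the order: (1) reduce to $x = g'N$ and expand both sides; (2) establish the Iwasawa factorization $\delta(hk g') = \delta(hk'')\,\delta(kg')$; (3) substitute and recognize the $K$-integral as $\Xi_G(g)$ times $\Xi_{G/N}(x)$; (4) remark that both choices $\delta = \plus\delta$ and $\delta = \op\delta$ give the same answer, consistent with the parenthetical in Definition~\ref{Xi_definition}.
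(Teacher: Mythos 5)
Your ``cleanest route'' is in substance the paper's proof: the paper observes that both sides are left-$K$-invariant as functions of $x$, uses $G=KAN$ to reduce to $x=a\in A$, and then invokes the multiplicativity $\delta(ga)=\delta(g)\delta(a)$ (valid because $A$ normalizes $N$) to split off $\delta(a)^{-1/2}=\Xi_{G/N}(x)$ and leave $\Xi_G(g)$. Your first route, proving and using the Iwasawa-cocycle factorization $\delta(gkg')=\delta(gk'')\delta(kg')$, is essentially a re-derivation of the spherical property \eqref{eq-spherical-fn} and is heavier than needed. Two small flags on your second route: the order should be to absorb the left-$K$ part of $g'$ by a change of variable in the $K$-integral, strip the right-$N$ part by right-$N$-invariance, and then you are already at $g'=a\in A$ where $\delta(gka)=\delta(gk)\delta(a)$ finishes it directly; the proposed manipulation $kg'a = k\cdot(g'ag'^{-1})\cdot g'$ is both unnecessary and unusable, since $g'ag'^{-1}$ generally does not lie in $K$ and so cannot be ``absorbed into the $K$-integral.''
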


\begin{proof}
Considered as functions of $x$, both the left and the right hand sides are left-$K$-invariant, and since $G=KAN$ we may assume that $x\in A$. Since $A$ normalizes $N$, the function $\delta:G\to \R^+$ satisfies 
\[
\delta(g)\delta(a)=\delta(ga) 
\]
for all $g\in G$ and $a\in A$, and thus the asserted equality follows from the definitions of $\Xi_G$ and $\Xi_{G/N}$.
\end{proof}

\begin{definition}\label{scale_GmodN_definition}
Let $\|\,\,\|:G/N\to [1,\infty)$ be the function 
\[
\left\| gN \right\|  =  \inf_{n\in N} \| gn\| . 
\]
More explicitly, if $g=kan$ in the Iwasawa decomposition, then $\|gN\| = \| a\|$.
\end{definition}

\begin{remark}
The inequality \eqref{scale_inequality} gives 
\begin{equation}\label{scale_GmodN_inequality} 
\| gx\| \leq \|g\| \cdot \|x\| 
\end{equation}
for all $g\in G$ and $x\in G/N$. \end{remark}

\begin{example}\label{Xi_GmodN_example}
Take $N=\plus{N}$, and identify $G/N$ with the complement of the origin in $\R^2$. Then, in polar coordinates, one has 
\[
\Xi_{G/N}(r,\theta) = r^{-1} \quad \text{and}\quad \|(r,\theta)\| = \max \{r,r^{-1}\}.
\]
\end{example}

\begin{definition}\label{HC_GmodN_definition}
The \emph{Harish-Chandra Schwartz space} $\HC(G/N)$ is the space of smooth, complex valued functions $h$ on $G/N$ for which 
\begin{equation}\label{HC_GmodN_seminorms}
\sup_{x\in G/N}\frac{| (X  h  Y)(x) |(1+\log\|x\|)^p}{\Xi_{G/N}(x)}   <\infty 
\end{equation}
for every $p\geq 0$ and every pair of invariant differential operators $X\in \U(\germ g)$ and $Y\in \U(\germ l)$.
\end{definition}

The seminorms appearing in \eqref{HC_GmodN_seminorms} make $\HC(G/N)$ into a nuclear Fr\'echet space, containing $\Sc(G/N)$ as a dense subspace. Compare \cite[Section 15.3]{Wallach2}.

\begin{example}
Continuing Example \ref{Xi_GmodN_example}, we see that $\HC(G/\plus{N})$ is the space of smooth functions on the complement of the origin in $\R^2$ all of whose derivatives satisfy estimates 
\[
\sup_{(r,\theta)} |h(r,\theta)|\cdot r(1+|\log r|)^p < \infty
\]
for all $p\geq 0$. Notice in particular that such functions need not vanish at the origin.
\end{example}

The groups $G$ and $L$ act on $\HC(G/N)$ by left and by right translation, as in \eqref{eq-left-G-action} and \eqref{eq-right-L-action}. These actions make $\HC(G/N)$ into an $\SF$-bimodule over $\Sc(G)$ and $\Sc(L)$, but in fact more is true:

\begin{lemma}
\label{lem-HC-convolution}
The left $\Sc(G)$ action and the right  $\Sc(L)$ action on $\HC(G/N)$ extend continuously to actions of $\HC(G)$ and $\HC(L)$, respectively.
\end{lemma}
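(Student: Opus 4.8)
The plan is to define the extended actions by the evident convolution integrals,
\[
(f{\cdot}h)(x)=\int_G f(g)\,h(g^{-1}x)\,\dd g, \qquad (h{\cdot}\phi)(x)=\int_L \phi(\ell)\,\delta(\ell)^{-1/2}h(x\ell^{-1})\,\dd \ell,
\]
for $f\in\HC(G)$, $\phi\in\HC(L)$ and $h\in\HC(G/N)$, and to prove that these integrals converge absolutely and define continuous bilinear maps $\HC(G)\times\HC(G/N)\to\HC(G/N)$ and $\HC(G/N)\times\HC(L)\to\HC(G/N)$. By Proposition~\ref{prop-moderate-growth} the original $\Sc(G)$-action on the $\SF$-module $\HC(G/N)$ is already given by these integrals when $f\in\Sc(G)$, and likewise for $\Sc(L)$, so the maps above genuinely extend the given actions; and once their continuity is known, every module identity follows automatically, as each holds on the dense subalgebras $\Sc(G)\subseteq\HC(G)$ and $\Sc(L)\subseteq\HC(L)$, both sides are separately --- hence jointly --- continuous, and $\HC(G)$, $\HC(L)$ are Fréchet algebras. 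Thus everything reduces to the two continuity assertions.

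I would dispose of the right $\HC(L)$-action first, as it is elementary. Since $L$ is abelian its $\Xi$-function is identically $1$, so $\HC(L)$ consists of the smooth functions on $L$ all of whose derivatives decay faster than every power of $(1+\log\|\ell\|)^{-1}$; in particular each element of $\HC(L)$ is integrable against $(1+\log\|\ell\|)^{m}\,\dd\ell$ for every $m$. In the integral defining $h{\cdot}\phi$ the factor $\delta(\ell)^{-1/2}$ cancels exactly against $\Xi_{G/N}(x\ell^{-1})=\delta(\ell)^{1/2}\,\Xi_{G/N}(x)$, while submultiplicativity of the norm gives $(1+\log\|x\ell^{-1}\|)^{-1}\le(1+\log\|\ell\|)(1+\log\|x\|)^{-1}$. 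Moving invariant differential operators off the integral and onto $\phi$ and $h$ --- which replaces them by elements of $\HC(L)$, resp.\ $\HC(G/N)$, depending continuously on the originals --- one reads off from \eqref{HC_GmodN_seminorms} both the absolute convergence and the required seminorm bounds.

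For the $\HC(G)$-action, the invariant differential operators in \eqref{HC_GmodN_seminorms} can again be moved off the convolution onto $f$ and $h$ separately, so it suffices to show that for each $p\ge0$ there is an estimate $|(f{\cdot}h)(x)|\le C_p\,(1+\log\|x\|)^{-p}\,\Xi_{G/N}(x)$ in which $C_p$ is a product of a continuous seminorm of $f$ and one of $h$. Write $\sigma(\cdot)=\log\|\cdot\|$. Feeding in the defining bounds $|f(g)|\le C\,\Xi_G(g)(1+\sigma(g))^{-p-s}$ and $|h(y)|\le C'\,\Xi_{G/N}(y)(1+\sigma(y))^{-p-s'}$, with $C,C'$ continuous seminorms of $f$, $h$ and $s,s'$ to be chosen large, and using the submultiplicativity $(1+\sigma(g))(1+\sigma(g^{-1}x))\ge 1+\sigma(x)$ that follows from \eqref{scale_inequality} and \eqref{scale_GmodN_inequality}, one extracts a factor $(1+\sigma(x))^{-p}$ from under the integral sign and is left with the $x$-uniform estimate
\[
\int_G \Xi_G(g)\,(1+\sigma(g))^{-s}\,\Xi_{G/N}(g^{-1}x)\,(1+\sigma(g^{-1}x))^{-s'}\,\dd g\ \le\ C''\,\Xi_{G/N}(x)
\]
valid for $s,s'$ sufficiently large. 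This last inequality is the heart of the matter, and the main obstacle.

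In contrast to the group case, it cannot be proved simply by separating the two factors with the Cauchy--Schwarz inequality and the ``almost $L^2$'' bound of Proposition~\ref{prop-almostL2}: the function $g\mapsto\Xi_{G/N}(g^{-1}x)$ is not square-integrable over $G$, since the stabilizers in $G$ of points of $G/N$ (conjugates of $N$) are noncompact, and along those noncompact directions $(1+\sigma(g))^{-s}$ provides no decay. One must instead exploit the joint decay of $\Xi_G(g)$ and $\Xi_{G/N}(g^{-1}x)$. For $G=\SL(2,\R)$ I would carry this out explicitly: write $g=k_1a_tk_2$ in the Cartan decomposition, so that $\dd g\asymp e^{2t}\,\dd t\,\dd k_1\,\dd k_2$, $\sigma(g)=t$, and $\Xi_G(g)=\Xi_G(a_t)\asymp(1+t)e^{-t}$; use the left-$K$-invariance of $\Xi_{G/N}$ to reduce to $\Xi_{G/N}(a_t^{-1}y)$ with $y=k_1^{-1}x$; and evaluate the remaining integrals in the model of Example~\ref{Xi_GmodN_example}, in which $\Xi_{G/N}$ and $\|\cdot\|$ are the elementary functions $r^{-1}$ and $\max\{r,r^{-1}\}$ on $\R^2\setminus\{0\}$. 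The asymptotics of $\Xi_G$ invoked here are themselves consequences of \eqref{eq-spherical-fn} and Proposition~\ref{prop-almostL2}; for a general reductive group the explicit evaluation would have to be replaced by standard Harish-Chandra estimates for $\Xi$-functions and their convolutions.
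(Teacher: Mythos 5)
Your reduction of the lemma to a single integral estimate is correct, as is your observation that the straightforward Cauchy--Schwarz route fails: $g\mapsto\Xi_{G/N}(g^{-1}x)$ is right-invariant under a conjugate of $N$ and so not in $L^2(G)$. But the paper does not then resort to the explicit Cartan-decomposition computation you sketch (which would work, but which you leave unfinished at the crucial step and which is tied to $\SL(2,\R)$). Instead it uses a $K$-averaging trick: in the integral $\int_G \Xi_G(g)(1+\log\|g\|)^{-t}\,\Xi_{G/N}(g^{-1}x)\,\dd g$, the measure and the factor $\Xi_G(g)(1+\log\|g\|)^{-t}$ are left-$K$-invariant, so one may replace $\Xi_{G/N}(g^{-1}x)$ by its average $\frac{1}{\vol(K)}\int_K \Xi_{G/N}(g^{-1}kx)\,\dd k$, and Lemma~\ref{Xi_GmodN_spherical_lemma} identifies this average with $\Xi_G(g^{-1})\Xi_{G/N}(x)=\Xi_G(g)\Xi_{G/N}(x)$. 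That collapses the whole integral to $\Xi_{G/N}(x)\int_G\Xi_G(g)^2(1+\log\|g\|)^{-t}\,\dd g$, which is finite by Proposition~\ref{prop-almostL2}. In other words, the ``exploit the joint decay'' that you rightly say is needed is achieved not by an explicit Cartan calculation but by the spherical-function-type identity for $\Xi_{G/N}$, which both shortens the proof and makes it work verbatim for any real reductive group and parabolic --- a point you concede your route does not cleanly deliver. Your treatment of the $\HC(L)$-action is fine and matches the paper's remark that a simpler version of the same argument handles it.
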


\begin{proof}
The following argument is essentially the same as the proof that $\HC(G)$ is an algebra under convolution.
Fix $f\in \HC(G)$, $h\in \HC(G/N)$, and $p\geq 0$. To begin, we want to obtain an estimate of the form  
\[ 
|fh(x) | (1+\log\|x\|)^p \leq C\cdot \Xi_{G/N}(x),
\]
where $C$ depends on suitable seminorms of $f$ and $h$.

 Choose $t \ge 0$ satisfying the $L^2$-bound in Proposition~\ref{prop-almostL2}, and  then choose   $r,s\geq 0$ so that 
\[
(1+\log\|x\|)^p (1+\log\|g\|)^t \leq (1+\log\|g^{-1}x\|)^r (1+\log\|g\|)^s 
\]
for all $g\in G$ and $x\in G/N$; this is possible by virtue of \eqref{scale_GmodN_inequality}.  For $x\in G/N$ we make the estimate
\begin{multline}
\label{eq-estimate1}
\int_G |f(g)h(g^{-1}x)| \, dg (1+\log\|x\|)^p 
\\
\leq |f|_s |h|_r \int_G \Xi_G(g) \Xi_{G/N}(g^{-1}x) (1+\log\|g\|)^{-t} \, dg,
\end{multline}
where 
\[
|f|_s = \sup_{g\in G} \frac{|f(g)|(1+\log\|g\|)^s}{\Xi_G(g)} \qquad \text{and}\qquad |h|_r = \sup_{x\in G/N} \frac{ |h(x)|(1+\log\|x\|)^r}{\Xi_{G/N}(x)}. 
\]
The measure $dg$, the function $\Xi_G$ and  the norm $\|g\|$ all $K$-invariant, so we may rewrite the the right-hand side of \eqref{eq-estimate1}  as
\[ 
\int_G \Xi_G(g) (1+\log\|g\|)^{-t} \left( \frac{1}{\vol(K)} \int_K \Xi_{G/N}(g^{-1}kx)\, dk\right) \, \dd g.
\]
The integral inside the parentheses is equal to $\Xi_G(g)\Xi_{G/N}(x)$ by Lemma \ref{Xi_GmodN_spherical_lemma} and by the equality $\Xi_G(g^{-1})=\Xi_G(g)$. This gives us an estimate
\[ 
 |fh(x)| (1+\log\|x\|)^p \leq \left(\int_G \Xi_G(g)^2 (1+\log\|g\|)^{-t} \, dg\right)  |f|_s   |h|_r \cdot \Xi_{G/N}(x) ,
\]
as required.  Differentiating under the integral gives similar estimates for the left and right derivatives of $fh$, and these estimates show that $\HC(G/N)$ is a continuous left module over $\HC(G)$. 

A similar (simpler) argument shows that $\HC(G/N)$ is also a continuous right module over $\HC(L)$.
\end{proof}

\begin{remark}
One can define a satisfactory  function     $\Xi_{G/N}$, and using it a Harish-Chandra space $\HC(G/N)$, for a general real reductive group $G$ and  para\-bolic subgroup $P=L N$, as follows. Denote by $\Xi_L$ the  Harish-Chandra function  for the reductive group $L$, and by $\delta_L$ the homomorphism introduced in \eqref{delta_equation}. Extend both $\Xi_L$ and $\delta_L$ to smooth functions on $G$ that are  left $K$-invariant and right $N$-invariant, and then define  
$
\Xi_{G/N}  \colon G \longrightarrow  \R^+ 
$
by
\[
\Xi_{G/N}(x)  = \Xi_L (x)  \delta_L(x)^{-1/2} .
\] 
The proof of Lemma~\ref{lem-HC-convolution} above carries over to this context without change. 
\end{remark}

\begin{definition}
The functor of \emph{tempered parabolic induction}
\[
\Ind_P^G \colon \SFMod{\HC(L)} \longrightarrow \SFMod{\HC(G)}
\]
is defined by 
$
X \mapsto \HC(G/N) \otimes _{L} X
$.
\end{definition}

Actually, tempered parabolic induction is the same as ordinary parabolic induction restricted to the full  category of tempered $\SF$-modules, by virtue of the following result:

\begin{proposition}
\label{prop-Sc-ind-versus-HC}
 Let $X$ be an $\SF$-module over $\HC(L)$.  The natural morphism 
\[
 \Sc(G/N) \otimes _{L} X \longrightarrow  \HC(G/N) \otimes _{L} X 
\]
is an isomorphism of $\SF$-modules over $\Sc(G)$.
\end{proposition}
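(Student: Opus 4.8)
The plan is to run the argument of the proof of Proposition~\ref{Ind_ind_proposition} a second time, with $\Sc(G/N)$ and $\Sc(L)$ replaced throughout by $\HC(G/N)$ and $\HC(L)$. Fix an $\SF$-module $X$ over $\HC(L)$, write $\pi$ for the associated action of $L$ (which is smooth and of moderate growth), and recall that $X$ is in particular an $\SF$-module over $\Sc(L)$. Let $\ind_P^G X$ denote the space of smooth functions $\phi\colon G\to X$ with $\phi(g\ell n)=\delta(\ell)^{-1/2}\pi(\ell)^{-1}\phi(g)$, with $G$ acting by left translation; this space depends only on the group action of $L$ on $X$, so it is the same whether $X$ is viewed over $\Sc(L)$ or over $\HC(L)$. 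By Proposition~\ref{Ind_ind_proposition} the map
\[
\alpha\colon h\otimes v \longmapsto \Bigl[\,g\mapsto \int_L h(g\ell N)\,\delta(\ell)^{1/2}\pi(\ell)v\,\dd\ell\,\Bigr]
\]
descends to an isomorphism $\bar\alpha_{\Sc}\colon \Sc(G/N)\otimes_L X\xrightarrow{\ \cong\ }\ind_P^G X$ of $\SF$-modules over $\Sc(G)$.

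I claim that the \emph{same} formula $\alpha$ descends to an isomorphism $\bar\alpha_{\HC}\colon \HC(G/N)\otimes_L X\xrightarrow{\ \cong\ }\ind_P^G X$. Granting this, the proposition follows immediately: the natural morphism $\iota\colon \Sc(G/N)\otimes_L X\to\HC(G/N)\otimes_L X$ induced by the inclusion $\Sc(G/N)\into\HC(G/N)$ satisfies $\bar\alpha_{\HC}\circ\iota=\bar\alpha_{\Sc}$, since on $\Sc(G/N)\otimes X$ the map $\alpha$ is computed by the same integral; hence $\iota=\bar\alpha_{\HC}^{-1}\circ\bar\alpha_{\Sc}$ is an isomorphism of $\SF$-modules over $\Sc(G)$.

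To construct $\bar\alpha_{\HC}$ one carries over the four maps $\alpha,\sigma,\tau,\beta$ of the proof of Proposition~\ref{Ind_ind_proposition}. The formal identities $\alpha\circ\sigma=\id_{\ind_P^G X}$ and $\beta\circ\tau+\sigma\circ\alpha=\id$ are proved by exactly the computation used there: they rest only on the cocycle identity defining $\ind_P^G X$, the normalization $\int\chi=1$ over each right $L$-orbit, and Fubini, none of which is sensitive to the change of function spaces. Passing to the balanced tensor product $\HC(G/N)\otimes_L X$ annihilates $\beta$, because, writing $\Phi$ for the right $\HC(L)$-action on $\HC(G/N)$ from Lemma~\ref{lem-HC-convolution}, the two terms $\Phi(h\otimes f)\otimes v$ and $h\otimes (f{\cdot}v)$ of $\beta(h\otimes f\otimes v)$ become identified there. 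Thus $\bar\sigma$ and $\bar\alpha_{\HC}$ descend to mutually inverse maps between $\HC(G/N)\otimes_L X$ and $\ind_P^G X$, and all maps in sight are continuous and $G$-equivariant.

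The one substantive point — and the main obstacle — is to check that $\alpha$, $\sigma$, $\tau$ still take values in the asserted spaces once the Harish-Chandra seminorms are in force; this is an estimate of exactly the same nature as the proof of Lemma~\ref{lem-HC-convolution}. For $\sigma$ there is nothing to do, as multiplication by the compactly supported cutoff $\chi$ lands in $\Sc(G/N)\subseteq\HC(G/N)$ regardless. For $\alpha$, using the bound $|h(x)|\le C_p\,\Xi_{G/N}(x)(1+\log\|x\|)^{-p}$ together with $\delta(g\ell)=\delta(g)\delta(\ell)$ and the moderate-growth bound on $\|\pi(\ell)v\|$, the integrand over $\ell\in L$ is dominated by $\delta(g)^{-1/2}$ times a rapidly decreasing function of $\log\|\ell\|$, so the integral converges, and differentiation under the integral sign (just as in Lemma~\ref{lem-HC-convolution}) gives a smooth, suitably equivariant function on $G$. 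For $\tau\colon h\otimes v\mapsto[(x,\ell)\mapsto\chi(x)\,\delta(\ell)^{1/2}h(x\ell)\,v]$ the same estimate shows that, for $x$ in the compact support of $\chi$, the function $\ell\mapsto\delta(\ell)^{1/2}h(x\ell)$ is rapidly decreasing in $\log\|\ell\|$, hence lies in $\HC(L)$, while in the $x$-variable it is compactly supported; so $\tau$ maps $\HC(G/N)\otimes X$ continuously into $\Sc(G/N)\otimes\HC(L)\otimes X$, and $\beta$ then carries this into $\HC(G/N)\otimes X$ via $\Phi$ and the $\HC(L)$-action on $X$. Securing these estimates uniformly, using Lemma~\ref{Xi_GmodN_spherical_lemma} and Proposition~\ref{prop-almostL2} exactly as in Lemma~\ref{lem-HC-convolution}, is the crux; the rest is the formal bookkeeping of the chain homotopy.
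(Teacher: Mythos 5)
Your approach --- running the $\alpha,\sigma,\tau,\beta$ contracting-homotopy argument from Proposition~\ref{Ind_ind_proposition} a second time over the Harish-Chandra space --- is a genuinely different route from the paper's. The paper first reduces to the case $X=\HC(L)$, and then exploits the $L$-equivariant Iwasawa decomposition $G/N\cong K\times_{K\cap L}L$ to obtain the two tensor-product descriptions $\Sc(K)\otimes_{K\cap L}\Sc(L)\cong\Sc(G/N)$ and $\Sc(K)\otimes_{K\cap L}\HC(L)\cong\HC(G/N)$; the natural morphism then factors through these isomorphisms, sidestepping any direct convergence estimate on $G/N$. Your route is more hands-on but must redo the analysis over $\HC(G/N)$. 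The formal parts are fine: $\beta$ lands in the closed span of relators (using that $\Sc(L)$- and $\HC(L)$-balanced tensor products coincide, Notation~\ref{rem-balanced-tp-notation}), and the natural morphism indeed equals $\bar\alpha_{\HC}^{-1}\circ\bar\alpha_{\Sc}$.

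There is, however, a genuine gap at precisely the point you single out as the crux: the convergence of the integral defining $\alpha$. You invoke the \emph{moderate-growth} bound $p(\pi(\ell)v)\le C\,\|\ell\|^{N}q(v)$, which holds for every $\SF$-module over $\Sc(L)$. But $\|\ell\|^{N}=e^{N\log\|\ell\|}$ grows exponentially in $\log\|\ell\|$, while the Harish-Chandra seminorms on $h\in\HC(G/N)$ deliver only polynomial decay $(1+\log\|g\ell N\|)^{-p}$ in $\log\|\ell\|$. The product is therefore \emph{not} a rapidly decreasing function of $\log\|\ell\|$ as you assert --- it is not even bounded --- and the integral for $\alpha(h\otimes v)$ need not converge. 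Indeed it cannot: the proposition is false for a general $\SF$-module over $\Sc(L)$ (take $X=\Sc(L)$, for which the natural morphism is the strict inclusion $\Sc(G/N)\hookrightarrow\HC(G/N)$), so the hypothesis that $X$ is a module over $\HC(L)$ must enter somewhere, and this is exactly the step where it must. What is actually needed --- and what temperedness supplies, since $L\cong\R^\times$ is abelian and $\Xi_L\equiv1$ --- is the stronger bound $p(\pi(\ell)v)\le C\,(1+\log\|\ell\|)^{N}q(v)$, growth polynomial in $\log\|\ell\|$ rather than in $\|\ell\|$. (One way to see this: write $v=f\cdot w$ with $f\in\HC(L)$ using the $\SF$-module property, and observe that the $\HC(L)$-seminorms of $\ell\mapsto\ell\cdot f$ grow polynomially in $\log\|\ell\|$ because each seminorm carries a factor $(1+\log\|\cdot\|)^p$.) With that corrected estimate your treatment of $\alpha$, and the parallel one for $\tau$, goes through; as written, the step fails.
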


\begin{proof} 
It suffices to show that 
the natural morphism
\begin{equation}
\label{eq-Sc-ind-versus-HC}
\Sc(G/N)\otimes_{L} \HC(L)  \longrightarrow \HC(G/N)
\end{equation}
is an isomorphism of $\SF$-modules.
 The Iwasawa decomposition $G=KAN$ gives an  $L$-equivariant identification 
 \[
 G/N \cong K\times_{K\cap L} L,
 \]
 and corresponding isomorphisms
 \[
\Sc(K)\otimes_{ K\cap L } \Sc(L) \stackrel\cong \longrightarrow  \Sc(G/N) 
\]
and
\[
\Sc(K)\otimes_{K\cap L} \HC(L)  \stackrel\cong \longrightarrow  \HC(G/N),
 \]
both given explicitly by the formula
 \[
 f\otimes h \longmapsto \left[ k\ell N \mapsto \int_{K\cap L} f(km)h(m\ell) \delta(\ell)^{-1/2}\, \dd m \right].
 \]
 The morphism \eqref{eq-Sc-ind-versus-HC}  factors in this way: 
\[
 \xymatrix{
 \Sc(G/N) \otimes_{L} \HC(L) \ar[r] & \HC(G/N) \\ 
\Sc(K)\otimes_{ K\cap L} \Sc(L) \otimes_{L} \HC(L)  \ar[u]^-{\cong} \ar[r]_-{\cong} &  \Sc(K)\otimes_{ K\cap L} \HC(L) \ar[u]_-{\cong} 
 }
 \]
 and so it is an isomorphism. 
 \end{proof}
 
Finally, let us define parabolic restriction in the tempered context.  The Harish-Chandra space $\HC(N\backslash G)$ is defined as in Definition \ref{HC_GmodN_definition}, using the functions 
\[
\Xi_{N\backslash G}(Ny) \coloneqq \Xi_{G/N}(y^{-1}N) \qquad \text{and}\qquad \|Ny\|\coloneqq \|y^{-1}N\|. 
\]

\begin{definition}
\label{def-tempered-parabolic-restriction}
 The functor of 
\emph{tempered parabolic restriction} 
\[
\Res_P^G \colon \SFMod{\HC(G)} \longrightarrow \SFMod{\HC(L)}
\] 
is defined by 
$X \mapsto \HC(N\backslash G)\otimes _{G} X$.
\end{definition}

 Unlike the situation with parabolic induction, the functors of parabolic  {restriction} and tempered  {parabolic} restriction   \emph{differ} on the category of tempered $\SF$-modules over $G$, where they are both defined.  We shall briefly study the two  functors together in the next section and   show that they both satisfy Frobenius reciprocity.  Thereafter we shall focus on tempered parabolic restriction.  The second adjoint theorem that we are aiming for holds only in the tempered context.
 
\section{Frobenius reciprocity}
\label{sec-frobenius}

Throughout this section we shall denote by $N$ either one of the unipotent groups $\plus{N}$ or $\op{N}$, and we shall denote by $P$ the corresponding parabolic subgroup.  We shall prove that the parabolic restriction functor $\Res_P^G$ is left-adjoint to the parabolic  induction functor $\Ind_P^G$, in either the general or the tempered context.  The argument is essentially the same as one that we have given in a related context \cite{CH_cb}, and so we shall be brief.

We shall consider first the parabolic induction functor 
\[
\Ind_P^G \colon \SFMod{\Sc(L)} \longrightarrow \SFMod{\Sc(G)} ,
\]
and treat the tempered case afterwards. The adjunction isomorphism is induced from  an $\Sc(L)$-bimodule map
\begin{equation}
\label{eq-first-frob-counit2}
\Fr\colon   \Sc(N\backslash G) \otimes _{G} \Sc(G/N)    \longrightarrow \Sc(L) ,
\end{equation}
that corresponds  to restriction of functions of the closed subset $L \subseteq N\backslash G / N$ (we shall give the precise definition in a moment).  Given a morphism
\[
T \in \Hom_{\Sc(G)} \bigl ( X,\Sc(G/N)\otimes _{L} Y\bigr ) 
\]
we  form the composition
\begin{equation}
\label{eq-frob-iso-def}
 \Sc(N\backslash G)\otimes _{G}X \xrightarrow{\, \mathrm{id}\otimes T\, }   \Sc(N\backslash G)\otimes _{G}\Sc(G/N)\otimes _{L} Y \xrightarrow{\Fr\otimes \mathrm{id}}  Y ,
 \end{equation}
 and so obtain a   map
\begin{equation}
\label{eq-frob-iso-def2}
\Hom_{\Sc(G)} \bigl ( X,\Sc(G/N)\otimes _{L} Y\bigr ) 
\longrightarrow
 \Hom_{\Sc(L)} \bigl ( \Sc(N\backslash G)\otimes _{G}X, Y\bigr ) .
\end{equation}
 Compare \cite[Chapter IV]{MacLane} for this standard type of construction, and for the language used next:
 
\begin{theorem}[Frobenius reciprocity]
\label{thm-frobenius}
The map  \eqref{eq-frob-iso-def2}  is a bijection.  That is,  the functor
\[
\Res_{P}^G: \SFMod{\Sc(G)} \longrightarrow  \SFMod{\Sc(L)}
\]
is left adjoint to the functor
\[
\Ind^G_{P} :\SFMod{\Sc(L)} \longrightarrow \SFMod{\Sc(G)} 
\]
via the counit morphism \eqref{eq-first-frob-counit2}.
\end{theorem}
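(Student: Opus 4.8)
The plan is to establish the adjunction in the usual way: construct a unit transformation $\eta\colon\mathrm{id}\to\Ind_P^G\circ\Res_P^G$ to accompany the counit $\Fr$ of~\eqref{eq-first-frob-counit2}, verify the two triangle identities, and then invoke the standard formalism (\cite[Chapter~IV]{MacLane}) to conclude that~\eqref{eq-frob-iso-def2} is a bijection, with inverse $S\mapsto\Ind_P^G(S)\circ\eta_X$. I would carry out the argument once, for $\Sc$; the tempered statement then follows by replacing $\Sc$ by $\HC$ throughout, since the balanced tensor products over $\Sc(G)$ and over $\HC(G)$ (respectively $\Sc(L)$ and $\HC(L)$) coincide (Notation~\ref{rem-balanced-tp-notation}) and the continuity of the relevant maps in the $\HC$-world is supplied by the estimates behind Lemma~\ref{lem-HC-convolution} and their evident analogues.

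\emph{Step 1: the counit $\Fr$.} I would first give the precise definition of $\Fr$, as the map of $\Sc(L)$-bimodules $\Sc(N\backslash G)\otimes_G\Sc(G/N)\to\Sc(L)$ realizing ``restriction of functions to the closed subset $L\subseteq N\backslash G/N$'': concretely, using the identifications of Remark~\ref{rem-GmodN-space}, a partial-integration-and-restriction formula with a normalizing power of $\delta$ inserted, chosen so that $\Fr$ intertwines the left and right $\Sc(L)$-actions on the source (inherited from $\Sc(N\backslash G)$ and $\Sc(G/N)$, respectively) with the bimodule structure of $\Sc(L)$ over itself. The first, and essentially only non-formal, point is that $\Fr$ is \emph{well defined}: that the formula factors through the balanced tensor product and produces a function which is continuously Schwartz on $L$ (and, in the tempered case, continuously Harish-Chandra Schwartz). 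I expect this to be the main obstacle. It is delicate precisely because the naive ``multiplication over $G$'' pairing $\int_G h_1(Ng)h_2(g^{-1}\ell)\,\dd g$ does not converge; one must localize near $L$ and exploit the closedness of $L$ in $N\backslash G/N$ together with the rapid decay of Schwartz functions transverse to $L$ — in effect, a little analysis of the stratified double coset geometry and of the behaviour of $\Sc(N\backslash G)\otimes_G\Sc(G/N)$ along the closed stratum $L$. In the tempered setting the Harish-Chandra Schwartz estimates, of the type proved in Lemma~\ref{lem-HC-convolution}, play the corresponding role.

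\emph{Step 2: the unit $\eta$.} Via the description of $\Ind_P^G$ in Proposition~\ref{Ind_ind_proposition}, an element of $\Ind_P^G\Res_P^G X$ is a smooth function $\phi\colon G\to\Res_P^G X$ with $\phi(g\ell n)=\delta(\ell)^{-1/2}\ell^{-1}{\cdot}\phi(g)$, on which $G$ acts by left translation. I would take $\eta_X(x)$ to be the function $g\mapsto$ (the image of $g^{-1}{\cdot}x$ under the canonical map $X\to\Res_P^G X$), where $G$ acts on $X$ via the associated moderate-growth representation; in the $p$-adic model this canonical map is the normalized coinvariants quotient $X\to X_N$. In practice one writes this out using a fixed smooth compactly supported cutoff $\chi$ on $G/N$ with total integral $1$ over every right $L$-orbit, exactly as in the proof of Proposition~\ref{Ind_ind_proposition}; a chain-homotopy argument of the kind used there and in Proposition~\ref{prop-SF-equals-SF} then shows that $\eta_X(x)$ is independent of $\chi$ modulo the defining relations of the balanced tensor products, that it genuinely lies in the induced space, that $\eta_X$ is a morphism of $\SF$-modules, and that $\eta$ is natural in $X$. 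This is exactly the unit ``obtained directly from the formula for the action of $G$ on parabolically induced representations''.

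\emph{Step 3: triangle identities and conclusion.} It remains to check that $(\Fr\otimes\mathrm{id}_{\Res_P^G X})\circ\Res_P^G(\eta_X)=\mathrm{id}_{\Res_P^G X}$ and $\Ind_P^G(\Fr\otimes\mathrm{id}_Y)\circ\eta_{\Ind_P^G Y}=\mathrm{id}_{\Ind_P^G Y}$. After unwinding the definitions and the isomorphism of Proposition~\ref{Ind_ind_proposition}, each of these becomes an equality of iterated integrals over $G$ and $L$ (respectively over $G$) that holds by Fubini together with the normalization $\int\chi=1$; in the $\HC$-case the absolute convergence needed to apply Fubini is again covered by estimates of the Lemma~\ref{lem-HC-convolution} type. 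Given the triangle identities, $\Res_P^G$ is left adjoint to $\Ind_P^G$ with unit $\eta$ and counit $\Fr\otimes\mathrm{id}$, and~\eqref{eq-frob-iso-def2} is the resulting natural bijection. These last computations are routine and run parallel to those of \cite{CH_cb} in the closely related operator-space setting, which is why they can be presented briefly.
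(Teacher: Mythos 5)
Your high-level plan --- construct unit and counit, verify the triangle identities, and invoke the standard formalism --- is the same as the paper's. But you have misidentified where the work lies, and the construction you propose for the unit does not go through in this framework.

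On the counit: you anticipate that well-definedness of $\Fr$ is ``the main obstacle,'' requiring analysis of the stratified geometry of $N\backslash G/N$ and worrying that ``the naive multiplication-over-$G$ pairing does not converge.'' In fact this step is essentially automatic in the paper: $LN$ is a closed Nash submanifold of $G$, so restriction $\Sc(G)\to\Sc(LN)$ is a continuous linear map (this is a basic fact about Nash manifolds, from Aizenbud--Gourevitch), and $\Fr$ is produced by tensoring this restriction on both sides with the trivial $N$-modules, with $\delta$-shifts inserted to get the $L$-actions right. The explicit formula $\Fr(h_1\otimes h_2)=\llangle h_1^*,h_2\rrangle$ is an $L^2(G/N)$ pairing of Schwartz functions, which converges without any stratification argument. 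Your integral $\int_G h_1(Ng)h_2(g^{-1}\ell)\,dg$ indeed diverges (the integrand is $N$-bi-invariant), but that is not the formula one should be using.

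On the unit: this is where the real technical content is, and your proposed construction has a genuine gap. You define $\eta_X(x)(g)$ to be ``the image of $g^{-1}\cdot x$ under the canonical map $X\to\Res_P^G X$,'' appealing to the $p$-adic picture where $\Res_P^G X = X_N$ is a quotient of $X$. But in the SF-module framework $\Res_P^G X=\Sc(N\backslash G)\otimes_G X$ is a balanced tensor product, not a quotient of $X$, and there is no canonical map $X\to\Res_P^G X$ to invoke. The chain-homotopy/cutoff remark at the end of your Step~2 gestures at the right repair but does not supply it. The paper instead constructs the unit at the bimodule level: an $\Sc(G)$-bimodule morphism
\[
\Sc(G)\longrightarrow\Sc(G/N)\otimes_L\Sc(N\backslash G),\qquad f\longmapsto k_f,\quad k_f(g_1,g_2)=\int_N f(g_1 n g_2^{-1})\,\dd n,
\]
after identifying $\Sc(G/N)\otimes_L\Sc(N\backslash G)$ with an explicit space of kernels on $G/N\times N\backslash G$ (Lemma~\ref{lem-cpt-action}). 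Tensoring this over $G$ with $X$ and using $\Sc(G)\otimes_G X\cong X$ yields the unit $\eta_X$; the triangle identities then reduce, via Lemma~\ref{lem-frob-counit-fmla} and the bimodule maps \eqref{eq-cpt-ops1} and \eqref{eq-cpt-ops4}, to the statement that two compositions equal the multiplication maps. Without this bimodule-level construction your proof is incomplete.
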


Here are the details concerning the construction of \eqref{eq-first-frob-counit2}.  The subgroup $LN \subseteq G$ is a closed Nash submanifold, and as a result restriction of Schwartz functions from $G$  to $LN$ gives a continuous linear map of Schwartz spaces
\begin{equation}
\label{eq-restr-to-closed}
  \Sc (G) \longrightarrow \Sc (LN) .
\end{equation}
This is left- and right-equivariant for the natural  actions of both $\Sc(L)$ and $\Sc(N)$.  The basic idea behind \eqref{eq-first-frob-counit2} is to form  the tensor product of \eqref{eq-restr-to-closed} on both sides by the trivial $\Sc(N)$-module $\C$: 
\begin{equation}
\label{eq-first-frob-counit1}
 \C  \otimes _{N} \Sc(G) \otimes _{N} \C    \longrightarrow \C \otimes _{N}  \Sc (LN) \otimes _{N} \C  .
\end{equation}
 The left-hand side of \eqref{eq-first-frob-counit1} identifies with $\Sc(N\backslash G) \otimes _{G} \Sc(G/N)$, while the right-hand side identifies with $\Sc(L)$, so we obtain an $\Sc(L)$-bimodule map \eqref{eq-first-frob-counit2} as required.    But to obtain the correct left and right actions of $L$ a little additional  care is required.

\begin{definition} Let  $\alpha\colon A \to \C^{\times}$ be a continuous homomorphism.
\begin{enumerate}[\rm (a)]
\item Denote by ${}_\alpha\C $  the one-dimensional space $\C$ viewed as a left    $\Sc(L)$-module via $\alpha$  (so that $\ell \cdot \lambda  =\alpha (\ell)  \lambda $)   and as a trivial  right $\Sc(L)$-module.  
\item Similarly, denote by $\C_\alpha$ the one-dimensional space $\C$ viewed as a right    $\Sc(L)$-module via $\alpha$ (so that $\lambda \cdot \ell = \lambda \alpha (\ell)$)  and as a trivial  left $\Sc(L)$-module. 
\end{enumerate}
We give  both ${}_\alpha\C $ and $\C_\alpha$ the trivial $N$-module structure.
\end{definition}

Now, in place of \eqref{eq-first-frob-counit1} we form the tensor product morphism
\begin{equation}
\label{eq-first-frob-counit1-1}
 {{}_{\delta^{-1/2}}\C}  \otimes _{N} \Sc(G) \otimes _{N} \C_{\delta^{1/2}}    \longrightarrow  {{}_{\delta^{-1/2}}\C} \otimes _{N}  \Sc (LN) \otimes _{N} \C_{\delta^{1/2}}  .
 \end{equation}
with the indicated diagonal actions of $L$.   The left-hand sides of  \eqref{eq-first-frob-counit1-1} and \eqref{eq-first-frob-counit2} are identified as $\Sc(L)$-bimodules via the diagram 
\begin{multline*}
     {{}_{\delta^{-1/2}}\C} \otimes _{N} \Sc(G) \otimes _{N}  \C_{\delta^{1/2}}
\stackrel \cong \longleftarrow  {{}_{\delta^{-1/2}}\C}  \otimes _{N} \Sc(G)\otimes_G\Sc(G) \otimes _{N}  \C_{\delta^{1/2}}  \\
  \stackrel \cong  \longrightarrow 
     \Sc(N\backslash G) \otimes _{G} \Sc(G/N) 
 \end{multline*}
in which the leftwards isomorphism is induced from the multiplication operation on $\Sc(G)$, while the rightwards morphism is the tensor product of the isomorphisms induced from integration over the right and left cosets of $N$:
\begin{equation}
\label{eq-int-over-cosets1}
   {{}_{\delta^{-1/2}}\C} \otimes _{N} \Sc(G)  \longrightarrow \Sc(N\backslash G) 
\quad
\text{and}\quad  \Sc(G)\otimes_{N} \C_{\delta^{1/2}} \longrightarrow \Sc(G/N)  .
 \end{equation}
 For instance the left-hand map in \eqref{eq-int-over-cosets1} is 
 \[
 \lambda \otimes f \longmapsto \left [ g \mapsto \lambda \int_N f(ng)\, \dd n \right ].
 \]
 As for the right-hand sides of  \eqref{eq-first-frob-counit1} and \eqref{eq-first-frob-counit2}, they are identified as $\Sc(L)$-bimodules by the isomorphism 
\[
 {}_{\delta^{-1/2}} \C\otimes_{N} 
 \Sc(LN) \otimes _{N} \C_{\delta^{ 1/2}} \longrightarrow \Sc(L)
 \]
given by the formula 
\[
\lambda  \otimes f \otimes \mu \longmapsto \left [ \ell \mapsto  \lambda \cdot \mu \cdot \delta (\ell)^{1/2} \int_N f(\ell n)\, \dd n\right ] .
\]

Turning to the  proof to Frobenius reciprocity, we shall begin by describing  the   restriction morphism  \eqref{eq-first-frob-counit2} in a different way.
Form the standard $L^2$-inner product 
\[
  \langle h_1, h_2\rangle_{L^2(G/N)}   = \int _{G/N} \overline{h_1 (x)} h_2 (x) \,\dd x
\]
 of complex-valued functions on $G/N$ (as we noted earlier, the right action of $L$ is unitary for this inner product). Given $h_1,h_2\in \Sc(G/N)$, let us now define 
\[
\llangle h_1,h_2\rrangle \in \Sc(L)
\]
by means of the formula
\[
\llangle h_1,h_2\rrangle \colon \ell \longmapsto   \langle h_1, h_2\cdot \ell^{-1}\rangle_{L^2 (G/N)}  .
 \]
As explained in \cite{Clare_pi,CCH1} this pairing enjoys a number of properties.  For instance  if $f_1,f_2\in \Sc(L)$, then
\[
\llangle h_1 f_1,h_2f_2\rrangle  = f_1{}^* \llangle h_1,h_2\rrangle f_2 ,
\]
where the multiplication is as usual  convolution, and  where 
\[
f^*(\ell) = \overline {f(\ell^{-1})}.
\]
  In addition, if $f\in \Sc(G)$, then 
\[
\llangle f h_1,h_2  \rrangle  = \llangle h_1, f^*  h_2  \rrangle  .
\]
The restriction morphism is expressible very simply in terms of $\llangle\,\,,\,\rrangle$ as follows:

\begin{lemma}
\label{lem-frob-counit-fmla}
If $h_1 \in \Sc(N\backslash G) $ and $h_2 \in \Sc(G/N)$, then
\[
\pushQED{\qed} 
\Fr \colon h_1 \otimes h_2 \longmapsto  \llangle h_1{}^*, h_2 \rrangle.
\qedhere
\popQED
\]
\end{lemma}

We are now ready to proceed to the proof of Frobenius reciprocity.  The formula 
\begin{equation*}
\label{eq-cpt-ops2}
h_1\otimes h_2 \longmapsto \bigl [  h \mapsto h_1 \llangle h_2{}^*, h\rrangle \bigr ] 
\end{equation*}
defines an $\Sc(G)$-bimodule morphism
\begin{equation}
\label{eq-cpt-ops1}
\Sc( G/N) \otimes _{L} \Sc (N\backslash G) \longrightarrow \End_{\Sc(L)} (\Sc(G/N)) .
\end{equation}
  Similarly the formula 
\begin{equation*}
\label{eq-cpt-ops3}
h_1\otimes h_2 \longmapsto \bigl [  h \mapsto \llangle h^*, h_1\rrangle h_2 \bigr ] 
\end{equation*}
defines an $\Sc(G)$-bimodule morphism
\begin{equation}
\label{eq-cpt-ops4}
\Sc( G/N) \otimes _{L} \Sc (N\backslash G) \longrightarrow \End_{\Sc(L)} (\Sc(N\backslash G)) .
\end{equation}

\begin{lemma} 
\label{lem-cpt-action}
The left action of $\Sc(G)$ on $\Sc(G/N)$ and the right action of  $\Sc(G)$  on $\Sc(N\backslash G)$   both factor through a single  $\Sc(G)$-bimodule morphism
\[
\Sc(G) \longrightarrow \Sc( G/N) \otimes _{L} \Sc (N\backslash G) .
\]
\end{lemma}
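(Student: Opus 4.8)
The plan is to construct the map $\Sc(G) \to \Sc(G/N) \otimes_L \Sc(N\backslash G)$ directly and then verify that composing with the two $\End$-morphisms \eqref{eq-cpt-ops1} and \eqref{eq-cpt-ops4} recovers the respective module actions. The natural candidate is the composition of the restriction-to-a-closed-submanifold map with an integration-over-cosets identification, dual to the construction of $\Fr$. Concretely: restriction of Schwartz functions from $G$ to the closed Nash submanifold $LN$ gives a continuous $\Sc(L)$- and $\Sc(N)$-bi-equivariant map $\Sc(G) \to \Sc(LN)$; tensoring on both sides over $\Sc(N)$ by the appropriate one-dimensional modules ${}_{\delta^{1/2}}\C$ and $\C_{\delta^{-1/2}}$ and using the coset-integration isomorphisms of \eqref{eq-int-over-cosets1} (now run in the other direction, or rather their analogues for the opposite quotients) identifies the source with $\Sc(G)$ and the target with $\Sc(G/N) \otimes_L \Sc(N\backslash G)$. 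The resulting map should be, up to the $\delta$-normalizations, the ``coproduct'' $f \mapsto \sum f' \otimes f''$ characterized by $f(x^{-1}y) = \int_L f'(xN \cdot \ell)\, f''(\ell^{-1} \cdot Ny)\, d\ell$ for $x,y\in G$, or in integrated form, the map sending $f$ to the element of $\Sc(G/N)\otimes_L \Sc(N\backslash G)$ represented (after choosing a cutoff $\chi$ as in the proof of Proposition~\ref{Ind_ind_proposition}) by a suitable $\chi$-truncation of the pullback of $f$ along $G/N \times N\backslash G \to G$, $(gN, Ng') \mapsto$ (a representative of) $g g'$. The precise formula will be pinned down so that the $\langle\!\langle\,,\,\rangle\!\rangle$-pairing of Lemma~\ref{lem-frob-counit-fmla} ``undoes'' it: one wants the composition $\Sc(G) \to \Sc(G/N)\otimes_L \Sc(N\backslash G) \xrightarrow{\text{\eqref{eq-cpt-ops1}}} \End_{\Sc(L)}(\Sc(G/N))$ to send $f$ to the operator $h \mapsto f\cdot h$.

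First I would write down the map explicitly in coordinates for $G=\SL(2,\R)$, using Remark~\ref{rem-GmodN-space} to identify $G/\op{N}$ with $\R^2\setminus\{0\}$ and $\op{N}\backslash G$ with the space of row vectors, so that $G/\op{N}\times \op{N}\backslash G \to G$ becomes (column vector, row vector) $\mapsto$ their product, a rank-one matrix, with the diagonal $L\cong\R^\times$ acting by the evident scalings; the lift of $f\in\Sc(G)$ to this product, cut off by $\chi$ along one factor, visibly lies in $\Sc(G/N)\otimes \Sc(N\backslash G)$ and descends mod $L$. Second, I would check directly that this assignment is an $\Sc(G)$-bimodule map — this is just a change of variables $g\mapsto g_0 g$ in the matrix-product picture, tracking the $\delta$-factors. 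Third, and this is the substantive step, I would verify the two factorization identities by unwinding the definitions of \eqref{eq-cpt-ops1}, \eqref{eq-cpt-ops4} and the pairing $\langle\!\langle\,,\,\rangle\!\rangle$: each reduces to the identity $\int_{G/N} \overline{h_1(x)}\, (f\cdot h)(x)\, dx = \iint \overline{h_1(x)}\, f(g)\, h(g^{-1}x)\, dx\, dg$ together with the coset-integration formula and a Fubini argument, so it is a routine (if slightly lengthy) computation, exactly parallel to the verification that $\Fr$ has the stated form.

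The main obstacle, and the place I would spend the most care, is bookkeeping: getting all the $\delta^{\pm 1/2}$ twists to match up so that the map genuinely lands in the \emph{balanced} tensor product $\otimes_L$ (not merely $\otimes\,\Sc(L)$) and so that both $\End$-factorizations come out with no spurious $\delta$-factor. This is the same delicacy flagged after \eqref{eq-first-frob-counit1} (``to obtain the correct left and right actions of $L$ a little additional care is required''), and the cleanest way to control it is probably \emph{not} to guess the formula but to \emph{define} the map as the composition through $\Sc(LN)$ described above, so that the $\delta$-twists are forced by equivariance of the restriction map and the two coset-integration isomorphisms, exactly as in the construction of $\Fr$; then the factorization identities become formal consequences of the associativity of these restriction/integration operations rather than hand computations. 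I expect the proof to be short once this structural point of view is adopted: essentially ``restrict $f$ to $LN$, then view $LN = N\backslash(LN) \times_L (LN)/N$'', with the two $\End$-factorizations reading off the two ways of contracting the resulting triple tensor product.
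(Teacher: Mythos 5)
There is a genuine gap, and it lies at the heart of your proposal: the construction of the unit map itself. Your plan is to mimic the construction of the counit $\Fr$, i.e.\ restrict Schwartz functions to the closed submanifold $LN$, tensor on both sides over $\Sc(N)$ by the $\delta$-twisted one-dimensional modules, and apply coset-integration isomorphisms. But that procedure produces a map going the \emph{wrong} way. Tensoring the restriction map $\Sc(G)\to\Sc(LN)$ over $\Sc(N)$ on both sides identifies the source $\C\otimes_N\Sc(G)\otimes_N\C$ with $\Sc(N\backslash G)\otimes_G\Sc(G/N)$ and the target with $\Sc(L)$ — i.e.\ you have reconstructed $\Fr$, not the unit. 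There is no way to ``run the coset-integration isomorphisms in the other direction'' to instead obtain a map with source $\Sc(G)$ and target $\Sc(G/N)\otimes_L\Sc(N\backslash G)$; the restriction map is not invertible. Your slogan ``restrict $f$ to $LN$, then view $LN=N\backslash(LN)\times_L(LN)/N$'' also cannot be right on dimensional grounds: $N\backslash(LN)\times_L(LN)/N\cong L$, whereas $LN$ is strictly larger. Similarly, your fallback plan of pulling $f$ back along $G/N\times N\backslash G\to G$, $(gN,Ng')\mapsto gg'$, fails because $gg'$ is not a well-defined element of $G$ (only its double $N$-coset is), which you half-acknowledge with the phrase ``a representative of.''

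What the paper does instead is to give an explicit alternate description of $\Sc(G/N)\otimes_L\Sc(N\backslash G)$ as a space of integral kernels: smooth functions $k$ on $G/N\times N\backslash G$ satisfying the twisted $L$-invariance $k(x\ell,y)=\delta(\ell)^{-1}k(x,\ell y)$ together with a Schwartz decay condition, the identification being $h_1\otimes h_2\mapsto\bigl[(g_1,g_2)\mapsto\int_L h_1(g_1\ell)\,\delta(\ell)\,h_2(\ell^{-1}g_2)\,d\ell\bigr]$. The morphism of the lemma then simply sends $f\in\Sc(G)$ to the Schwartz kernel of the convolution operator $h\mapsto f*h$ on $\Sc(G/N)$, namely $k_f(g_1,g_2)=\int_N f(g_1 n g_2^{-1})\,dn$. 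Note the crucial \emph{integration over $N$}: it is precisely this averaging that makes $k_f$ descend to a function on $G/N\times N\backslash G$, and it is entirely absent from your proposed ``coproduct'' characterization $f(x^{-1}y)=\int_L f'(xN\ell)f''(\ell^{-1}Ny)\,d\ell$ (which also omits the $\delta(\ell)$ factor required to make the pairing $L$-balanced). The map is an averaging map, not a restriction map, so the analogy with $\Fr$ is misleading; the correct analogy is with the Schwartz-kernel theorem. Once the kernel formula is written down, the factorization identities you describe in your third step do follow by a Fubini computation as you expect — that part of your outline is sound.
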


Taking this for granted for a moment, the proof of reciprocity is straightforward:
\begin{proof}[Proof of Theorem~\ref{thm-frobenius}]
The  morphism from the statement of Lemma~\ref{lem-cpt-action} gives rise to a linear transformation 
\[
 \Hom_{\Sc(L)} \bigl ( \Sc(N\backslash G)\otimes _{G}X, Y\bigr ) 
\longrightarrow
\Hom_{\Sc(G)} \bigl ( X,\Sc(G/N)\otimes _{L} Y\bigr )  .
\]
To prove that this is inverse to the map in the statement of Theorem~\ref{thm-frobenius} it suffices to show that the composition
\begin{multline*}
\Sc(G) \otimes _G \Sc (G/N) \longrightarrow 
\Sc(G/N) \otimes _L \Sc (N \backslash G) \otimes _G \Sc(G/N)
\\
 \longrightarrow
\Sc(G/N)\otimes _L \Sc(L) \longrightarrow 
\Sc(G/N)
\end{multline*}
is the multiplication map, as is  
\begin{multline*}
\Sc (N\backslash G )  \otimes _G  \Sc(G) \longrightarrow 
\Sc (N\backslash G )  \otimes _G \Sc(G/N) \otimes _L \Sc (N \backslash G) 
\\
 \longrightarrow
\Sc(L) \otimes _L \Sc (N \backslash G) \longrightarrow 
\Sc(N\backslash G ) .
\end{multline*}
Compare \cite[Chapter IV]{MacLane}.  These facts follow immediately from Lemma~\ref{lem-frob-counit-fmla} together with \eqref{eq-cpt-ops1} and \eqref{eq-cpt-ops4}.
\end{proof}

\begin{proof}[Proof of Lemma~\ref{lem-cpt-action}]
The tensor product $\Sc(G/N) \otimes _L \Sc(N\backslash G)$  may be identified as an $\SF$-bimodule over $\Sc(G)$ with the space  of smooth functions 
\[
k\colon G/N \times N \backslash G  \longrightarrow \C 
\]
such that 
\begin{enumerate}[\rm (i)]
\item  $k(x\ell, y) = \delta (\ell)^{-1} k(x,\ell y)$, and 
\item   the function 
\[
(g_1,g_2) \longmapsto \int _L \delta (\ell) k(x\ell, \ell^{-1} y) \, \dd \ell 
\]
is a Schwartz function on $G/N \times _L N\backslash G$.
\end{enumerate}
The isomorphism is 
\[
h_1 \otimes h _2 
\longmapsto 
\left [ 
(g_1, g_2 ) \mapsto  \int _L  h_1 (g_1\ell) \delta(\ell)  h_2 (\ell^{-1} g_2)  \, \dd \ell 
\right ]
\]
Using this description of $\Sc(G/N)\otimes _L \Sc(N\backslash G)$ we may define a morphism
\[
\Sc(G) \longrightarrow \Sc(G/N) \otimes _L \Sc(N\backslash G)
\]
by associating to $f\in \Sc(G)$ the smooth function 
\[
k_f (g_1,g_2)  =   \int _{N}  f(g_1ng_2^{-1})\, dn 
\]
satisfying (i) and (ii) above. Compare \cite{CCH1}.
\end{proof}

\begin{remark}
There is a more familiar adjunction isomorphism in the form
 \begin{equation}
 \label{rem-hom-tensor1}
 \Hom_{\Sc(L)} \bigl ( \Sc(N\backslash G)\otimes _{G}X, Y\bigr )
\stackrel \cong \longrightarrow
\Hom_{\Sc(G)} \bigl  ( X,\Hom_{\Sc(L)} \left ( \Sc(N\backslash G),  Y\right ) \bigr ) .
\end{equation}
It is related to the Frobenius reciprocity theorem proved in this section, as follows. The left $\Sc(G)$-module  $\Hom_{\Sc(L)} \left ( \Sc(N\backslash G),  Y\right ) $ is not an $\SF$-module (it is not a Fr\'echet space) but this problem can be addressed by substituting  the (completed, projective) tensor product
 \begin{equation}
 \label{rem-hom-tensor2}
 \Sc(G)\otimes_G  \Hom_{\Sc(L)} \left ( \Sc(N\backslash G),  Y\right ).
\end{equation}
 This does not affect the isomorphism \eqref{rem-hom-tensor1} as long as $X$ is an $\SF$-module.
The formula 
\[
h \otimes y \longmapsto \Bigl [ h_1 \mapsto  y \Fr (h_1 \otimes h) \Bigr ]
\]
defines a continuous map of $\Sc(G)$-modules
 \[
  \Sc(G/N) \otimes _{L} Y  \longrightarrow  \Hom_{\Sc(L)} ( \Sc(N\backslash G),  Y)  .
\]
This becomes an isomorphism after tensoring with $\Sc(G)$. Its inverse is the composition 
 \begin{multline*}
   \Sc(G)\otimes_G  \Hom_{\Sc(L)} \left ( \Sc(N\backslash G),  Y\right )
  \\
   \longrightarrow 
   \Sc(G/N) \otimes _{L} \Sc(N\backslash  G) \otimes _G \Hom_{\Sc(L)} \left ( \Sc(N\backslash G),  Y\right )
\\
  \longrightarrow
      \Sc(G/N) \otimes Y ,
 \end{multline*}
 that combines the unit transformation of Lemma~\ref{lem-cpt-action} with the obvious evaluation map.  In short, Lemma~\ref{lem-cpt-action} converts the standard Hom-Tensor adjunction isomorphism into the Frobenius theorem of this section.
\end{remark}

\begin{remark} 
See  \cite{CCH2} for a related adjunction isomorphism in the context of operator modules rather than $\SF$-modules.
\end{remark}

Finally, let us formulate and prove the Frobenius reciprocity theorem in the context of tempered $\SF$-modules.  The counit transformation \eqref{eq-first-frob-counit2} yields a bimodule map
\begin{equation}
\label{eq-first-frob-counit-tempered}
\Fr\colon   \HC(N\backslash G) \otimes _{G} \HC(G/N)    \longrightarrow \HC(L) 
\end{equation}
simply by tensoring \eqref{eq-first-frob-counit2} on the left and right by $\HC(L)$.

\begin{theorem}[Frobenius reciprocity for tempered modules]
\label{thm-tempered-frobenius}
The functor
\[
\Res_{P}^G: \SFMod{\HC(G)} \longrightarrow  \SFMod{\HC(L)}
\]
is left adjoint to the functor
\[
\Ind^G_{P} :\SFMod{\HC(L)} \longrightarrow \SFMod{\HC(G)}. 
\]
via the counit transformation \eqref{eq-first-frob-counit-tempered}.
\end{theorem}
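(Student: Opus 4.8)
The plan is to deduce the theorem from Theorem~\ref{thm-frobenius} by composing adjunctions, the passage between the general and the tempered contexts being governed by Proposition~\ref{prop-Sc-ind-versus-HC}. Write $i_G\colon\SFMod{\HC(G)}\to\SFMod{\Sc(G)}$ and $i_L\colon\SFMod{\HC(L)}\to\SFMod{\Sc(L)}$ for the restriction functors. Three preliminary facts are needed. \emph{(a)} $i_G$ and $i_L$ are fully faithful: a continuous $\Sc(G)$-module map between two $\SF$-modules over $\HC(G)$ is automatically $\HC(G)$-linear, because $\Sc(G)$ is dense in $\HC(G)$ and all module actions in sight are continuous. \emph{(b)} There is a natural isomorphism $i_G\circ\Ind_P^G\cong\Ind_P^G\circ i_L$ of functors $\SFMod{\HC(L)}\to\SFMod{\Sc(G)}$ (the left $\Ind_P^G$ the tempered one, the right one the general one); this is exactly the content of Proposition~\ref{prop-Sc-ind-versus-HC}, applied to an arbitrary tempered module. \emph{(c)} The extension-of-scalars functor $j_L\colon W\mapsto\HC(L)\otimes_L W$ from $\SFMod{\Sc(L)}$ to $\SFMod{\HC(L)}$ is left adjoint to $i_L$: the unit $W\to i_Lj_LW$ is induced by the inclusion $\Sc(L)\into\HC(L)$, the counit $j_Li_LY\to Y$ is the action map $\HC(L)\otimes_L(i_LY)\to Y$, which is an isomorphism by Notation~\ref{rem-balanced-tp-notation} together with the defining property of an $\SF$-module over $\HC(L)$, and the triangle identities are immediate from the explicit bijection sending a continuous $\Sc(L)$-map $f\colon W\to i_LY$ to the $\HC(L)$-map $h\otimes w\mapsto h\cdot f(w)$.

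With these in hand, the key observation is that tempered parabolic restriction factors as
\[
\Res_P^G X \;=\; \HC(N\backslash G)\otimes_G X \;=\; \bigl(\HC(L)\otimes_L\Sc(N\backslash G)\bigr)\otimes_G X \;=\; j_L\bigl(\Res_P^G(i_G X)\bigr),
\]
the middle equality being the evident $N\backslash G$-analogue of Proposition~\ref{prop-Sc-ind-versus-HC} and the last using associativity of the balanced tensor product over nuclear Fr\'echet algebras. Hence, for $X\in\SFMod{\HC(G)}$ and $Y\in\SFMod{\HC(L)}$, there are natural isomorphisms
\begin{align*}
\Hom_{\HC(L)}\bigl(\Res_P^G X,\,Y\bigr)
&\cong \Hom_{\Sc(L)}\bigl(\Res_P^G(i_G X),\,i_L Y\bigr)\\
&\cong \Hom_{\Sc(G)}\bigl(i_G X,\,\Ind_P^G(i_L Y)\bigr)\\
&\cong \Hom_{\HC(G)}\bigl(X,\,\Ind_P^G Y\bigr),
\end{align*}
where the first isomorphism is the adjunction of \emph{(c)}, the second is Theorem~\ref{thm-frobenius}, and the third combines $\Ind_P^G(i_L Y)\cong i_G(\Ind_P^G Y)$ from \emph{(b)} with the full faithfulness of $i_G$ from \emph{(a)}. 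The composite is a natural isomorphism, so $\Res_P^G$ is left adjoint to $\Ind_P^G$ in the tempered context.

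It then remains to identify the counit of this composite adjunction with the transformation \eqref{eq-first-frob-counit-tempered}, i.e.\ to check that the induced map $\Res_P^G\Ind_P^G Y\to Y$ is $\Fr\otimes\id_Y$ with $\Fr$ as in \eqref{eq-first-frob-counit-tempered}. For this one uses that the counit of a composite of adjunctions is the corresponding composite of counits: the counit of Theorem~\ref{thm-frobenius} is $\Fr\otimes\id$ with $\Fr$ as in \eqref{eq-first-frob-counit2}, and the counit of \emph{(c)} is the invertible action map; unwinding the identifications of Proposition~\ref{prop-Sc-ind-versus-HC} and recalling that \eqref{eq-first-frob-counit-tempered} was \emph{defined} as $\HC(L)\otimes_L\Fr\otimes_L\HC(L)$, one recovers exactly the required map. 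I expect this last step --- matching the abstract adjunction with the explicit transformation \eqref{eq-first-frob-counit-tempered} through all the intervening identifications --- to be the only part demanding genuine care; it is fiddly rather than deep, consistent with the claim in the introduction that Frobenius reciprocity is straightforward. (An alternative route is to repeat the proof of Theorem~\ref{thm-frobenius} verbatim over $\HC$, replacing Lemmas~\ref{lem-cpt-action} and~\ref{lem-frob-counit-fmla} by their tempered analogues; but those rest on Harish-Chandra-type growth estimates in the spirit of Lemma~\ref{lem-HC-convolution} --- for the integration-over-$N$ maps $\HC(G)\to\HC(N\backslash G)$ and for the pairing $\llangle\,\cdot\,,\,\cdot\,\rrangle$ on $\HC(G/N)$ --- which is why the argument above is preferable.)
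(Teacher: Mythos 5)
Your argument is correct, and it takes a genuinely different route from the paper's. The paper proves the tempered Frobenius theorem by showing that the unit transformation $\Sc(G)\to\Sc(G/N)\otimes_{L}\Sc(N\backslash G)$ of Lemma~\ref{lem-cpt-action} admits a continuous extension to $\HC(G)\to\HC(G/N)\otimes_{L}\HC(N\backslash G)$, so that both unit and counit of the prospective adjunction are present in the tempered category and the triangle-identity argument of Theorem~\ref{thm-frobenius} can simply be repeated verbatim. You instead compose adjunctions: your factorization $\Res_{P}^{G}=j_{L}\circ\Res_{P}^{G}\circ i_{G}$ of tempered parabolic restriction through the ordinary one, the scalar-extension adjunction $j_{L}\dashv i_{L}$, the already-proved $\Sc$-Frobenius theorem, the compatibility $\Ind_{P}^{G}\circ i_{L}\cong i_{G}\circ\Ind_{P}^{G}$ of Proposition~\ref{prop-Sc-ind-versus-HC}, and full faithfulness of $i_{G}$. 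Both approaches are valid; yours buys a clean, estimate-free formal argument and sidesteps the need to justify the paper's dotted-arrow extension (which the paper asserts but does not prove), at the modest cost that matching the resulting composite counit with the explicit transformation \eqref{eq-first-frob-counit-tempered} --- which the theorem statement does demand --- becomes a separate unwinding exercise, which you correctly flag as fiddly rather than deep. Two small things to note if you were to fill in details: (i) you rely on the $N\backslash G$-analogue of Proposition~\ref{prop-Sc-ind-versus-HC}, namely $\HC(L)\otimes_{L}\Sc(N\backslash G)\cong\HC(N\backslash G)$, which is not stated in the paper but follows by the same Iwasawa-decomposition argument; (ii) for $j_{L}$ to land in $\SFMod{\HC(L)}$ one needs $\HC(L)$ to be an $\SF$-module over itself, which follows from the results of Section~3 (the $\Sc(L)$-action on $\HC(L)$ extends continuously to $\HC(L)$), but is worth making explicit.
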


\begin{proof}
Consider the diagram 
\[
\xymatrix{
\Sc(G) \ar[r] \ar[d] & \Sc(G/N)\otimes _L \Sc(N\backslash G) \ar[d] \\
\HC(G) \ar@{..>}[r] & \HC(G/N)\otimes _L \HC(N\backslash G)
}
\]
in which the  vertical maps are induced from  the     inclusions of $\Sc(G)$ into $\HC(G)$ and of $\Sc(L)$ into $\HC(L)$, and the top horizontal map is the unit transformation for Frobenius reciprocity from Lemma~\ref{lem-cpt-action}.    There is a continuous extension along the bottom dotted arrow, and the argument used to prove Theorem~\ref{thm-frobenius} can now be repeated. 
 \end{proof}

\section{The second adjoint theorem}
\label{sec-bernstein}

In this section we shall formulate our second adjoint theorem.    We shall      define a $\HC(L)$-bimodule morphism
\begin{equation}
\label{eq-tempered-bernstein-unit}
\Extn \colon \HC(L) \longrightarrow \HC(\op{N}\backslash G) \otimes _{G} \HC(G/\plus{N}) ,
\end{equation}
that corresponds  to the inclusion
of $L$ as an open subset of $\op{N}\backslash G/\plus{N}$.    It will serve as the unit for the  second adjunction isomorphism.

We shall begin by working with $\Sc(G)$ rather than $\HC(G)$, and produce an $\Sc(L)$-bimodule morphism
\begin{equation}
\label{eq-bernstein-unit1}
\Extn \colon \Sc(L) \longrightarrow \Sc(\op{N}\backslash G) \otimes _{G} \Sc(G/\plus{N}).
\end{equation}
 The Cartesian product $\op{N}L \plus{N}$ is a Nash-open set in $G$ (its complement is the closed subset $L\plus{N}\subseteq G$) and there is an associated  a continuous linear map 
\begin{equation}
\label{eq-extn-by-zero}
 \Sc(\op{N}L \plus{N}) \longrightarrow \Sc(G)
\end{equation}
that extends functions by zero from $\op{N}L \plus{N}$ to $G$.  Compare \cite[Proposition 4.3.1]{AizGour}.  
Form the tensor product  morphism
\begin{equation}
\label{eq-extn-by-zero2}
{{}_{\op{\delta}^{-1/2}}\C}  \otimes _{\op{N}}  \Sc(\op{N}L \plus{N}) \otimes _{\plus{N}}  \C_{\plus{\delta}^{1/2}} 
\longrightarrow 
{{}_{\op{\delta}^{-1/2}}\C}  \otimes _{\op{N}}  \Sc(G) \otimes _{\plus{N}}   \C_{\plus{\delta}^{1/2}} .
\end{equation}
The right-hand side in \eqref{eq-extn-by-zero2} identifies with 
$\Sc(\op{N}\backslash G) \otimes _G \Sc(G/N)$ as an $\SF$-bimodule over $\Sc(L)$ via the tensor product (over $G$) of  the bimodule isomorphisms
\begin{equation}
\label{eq-integration-isos}
{{}_{\op{\delta}^{-1/2}}\C} \otimes _{\op{N}} \Sc(G)  \stackrel{\cong}\longrightarrow  \Sc (\op{N}\backslash G) 
\quad \text{and} \quad  
\Sc(G) \otimes _{\plus{N}}  \C_{\plus{\delta}^{1/2}}  \stackrel{\cong}\longrightarrow  \Sc (G/\plus{N})  
\end{equation}
given by integration.  The left-hand side in \eqref{eq-extn-by-zero2}  identifies with $\Sc(L)$ as an $\SF$-bimodule  along the bimodule isomorphism
\[
{{}_{\op{\delta}^{-1/2}}\C}  \otimes _{\op{N}}  \Sc(\op{N}L \plus{N}) \otimes _{\plus{N}}  \C_{\plus{\delta}^{1/2}}  
\longrightarrow \Sc(L)
\]
given by the formula 
\begin{equation}
\label{eq-opNLplusN}
\lambda \otimes f \otimes \mu \longmapsto \left [   \ell \mapsto \lambda \cdot \mu\cdot  \plus{\delta}(\ell)^{1/2}  \int _{\op{N}}\int_{\plus{N}} f (\op{n} \ell \plus{n} ) \, \dd \op{n} \dd \plus{n}  \right ]
\end{equation}
We obtain an ``extension by zero'' morphism of bimodules \eqref{eq-bernstein-unit1} 
as required.  And then we obtain a tempered version \eqref{eq-tempered-bernstein-unit}  by tensoring  \eqref{eq-bernstein-unit1} on both sides  by $\HC(L)$ and invoking Proposition~\ref{prop-Sc-ind-versus-HC}.  With this, we can formulate our second adjoint theorem:
 
\begin{theorem}
[Second Adjoint Theorem]
\label{thm-second-adjoint}
The functor
\[
\Ind_{\plus{P}}^G: \SFMod{\HC(L)} \longrightarrow  \SFMod{\HC(G)}
\]
is left adjoint to the functor
\[
\Res^G_{\op{P}} :\SFMod{\HC(G)} \longrightarrow \SFMod{\HC(L)}
\]
via the unit morphism \eqref{eq-tempered-bernstein-unit}.
\end{theorem}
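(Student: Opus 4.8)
The plan is to produce a counit for the proposed adjunction and to verify the triangle identities; the unit is already at hand, namely the natural transformation $\eta$ with $\eta_X\colon X=\HC(L)\otimes_L X\xrightarrow{\,\Extn\otimes\id\,}\HC(\op{N}\backslash G)\otimes_G\HC(G/\plus{N})\otimes_L X=\Res_{\op{P}}^G\Ind_{\plus{P}}^G X$. Since $\Ind_{\plus{P}}^G=\HC(G/\plus{N})\otimes_L(-)$ and $\Res_{\op{P}}^G=\HC(\op{N}\backslash G)\otimes_G(-)$ are tensor-product functors and $\eta$ is induced by a bimodule map, it suffices to construct a single $\HC(G)$-bimodule morphism
\[
\mathbf c\colon \HC(G/\plus{N})\otimes_L\HC(\op{N}\backslash G)\longrightarrow \HC(G),
\]
which then induces a counit $\epsilon_Y\colon \HC(G/\plus{N})\otimes_L\HC(\op{N}\backslash G)\otimes_G Y\to Y$, and to check the two triangle identities in their universal, bimodule-level forms: that the composite
\begin{multline*}
\HC(G/\plus{N})\xrightarrow{\,\id\otimes\Extn\,}\HC(G/\plus{N})\otimes_L\HC(\op{N}\backslash G)\otimes_G\HC(G/\plus{N})\\
\xrightarrow{\,\mathbf c\otimes\id\,}\HC(G)\otimes_G\HC(G/\plus{N})=\HC(G/\plus{N})
\end{multline*}
is the identity, and symmetrically that the composite $\HC(\op{N}\backslash G)\xrightarrow{\,\Extn\otimes\id\,}\HC(\op{N}\backslash G)\otimes_G\HC(G/\plus{N})\otimes_L\HC(\op{N}\backslash G)\xrightarrow{\,\id\otimes\mathbf c\,}\HC(\op{N}\backslash G)$ is the identity. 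Given such a $\mathbf c$, the adjunction bijection $\Hom_{\HC(G)}(\Ind_{\plus{P}}^G X,Y)\cong\Hom_{\HC(L)}(X,\Res_{\op{P}}^G Y)$ is the expected one, $S\mapsto(\id\otimes S)\circ\eta_X$ and $T\mapsto(\mathbf c\otimes\id)\circ(\id\otimes T)$, and the triangle identities make these mutually inverse exactly as in the proof of Theorem~\ref{thm-tempered-frobenius}; compare \cite[Chapter~IV]{MacLane}.

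The construction of $\mathbf c$ is where the real content lies, and here I would not use the ``extension from the open double coset $\op{N} L\plus{N}$'' recipe --- the closest analogue of Bernstein's construction --- but a spectral one. By the compact-picture isomorphism $\Sc(K)\otimes_{K\cap L}\HC(L)\cong\HC(G/N)$ from the proof of Proposition~\ref{prop-Sc-ind-versus-HC}, together with the Fourier (Mellin) transform on $L\cong\R^\times$, one identifies $\HC(G/\plus{N})$ and $\HC(\op{N}\backslash G)$ with Schwartz families, parametrised by the unitary characters $\chi$ of $L$, of vectors in the compact models of $\Ind_{\plus{P}}^G\chi$ and of the contragredient of $\Ind_{\op{P}}^G\chi$; correspondingly $\HC(G/\plus{N})\otimes_L\HC(\op{N}\backslash G)$ becomes the Schwartz space of families $\chi\mapsto T(\chi)$ of finite-rank operators $\Ind_{\op{P}}^G\chi\to\Ind_{\plus{P}}^G\chi$. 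The morphism $\mathbf c$ then sends such a family to a wave packet of intertwiner-twisted matrix coefficients,
\[
\mathbf c(T)\colon g\longmapsto\int_{\widehat L}\trace\bigl(\pi_\chi(g)^{-1}\,T(\chi)\,\mathcal{A}(\chi)\bigr)\,\dd\mu(\chi),
\]
where $\mathcal{A}(\chi)\colon\Ind_{\plus{P}}^G\chi\to\Ind_{\op{P}}^G\chi$ is the suitably normalised standard intertwining operator and $\dd\mu$ is Plancherel measure on the unitary principal series. That this integral converges, lies in $\HC(G)$, depends continuously on $T$, and is a bimodule map rests on the two nonelementary inputs advertised in the introduction: the convergence and normalised meromorphic behaviour of the standard intertwining integral for $\SL(2,\R)$ --- in particular its regularity and invertibility on the unitary axis after normalisation --- and Harish-Chandra's wave-packet theorem, together with the compatibility of the wave-packet map with convolution. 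These are the facts to be stated in Section~\ref{sec-proof} and proved in Section~\ref{appendix-fourier}.

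With $\mathbf c$ in hand, the triangle identities unwind, through the same compact-picture identifications, into Fourier-inversion statements. The pivotal step is to show that $\Extn$, which Section~\ref{sec-bernstein} defines purely geometrically as extension by zero from the open cell $\op{N} L\plus{N}\subseteq G$, admits a spectral description in which, at the parameter $\chi$, it inserts the identity endomorphism of the compact model twisted by $\mathcal{A}(\chi)^{-1}$ --- an incarnation of the fact that on the open cell the two parabolic inductions $\Ind_{\plus{P}}^G$ and $\Ind_{\op{P}}^G$ are identified by the intertwining operator. Once that matching is made, the first triangle composite becomes, fibrewise over $\widehat L$, multiplication by $\mathcal{A}(\chi)\mathcal{A}(\chi)^{-1}=\id$, and the required identity drops out of the Fourier inversion (Plancherel) theorem for $\HC(G/\plus{N})$; the second triangle is handled identically, with $\plus{N}$ and $\op{N}$ interchanged, or by invoking the duality $h\mapsto[g\mapsto h(g^{-1})]$ between $\HC(G/N)$ and $\HC(N\backslash G)$. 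The principal obstacle is therefore analytic rather than categorical: establishing the stated properties of the standard intertwining integral on the tempered dual of $\SL(2,\R)$, proving the wave-packet and Plancherel theorems for the spaces $\HC(G/N)$, and reconciling the geometric definition of $\Extn$ with its spectral form --- precisely the material postponed to Section~\ref{appendix-fourier}.
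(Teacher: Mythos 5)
Your high-level strategy matches the paper's exactly: reduce the adjunction to a bimodule-level counit $\mathbf c\colon\HC(G/\plus{N})\otimes_L\HC(\op{N}\backslash G)\to\HC(G)$, built from wave packets of intertwiner-twisted matrix coefficients, plus the two triangle identities; and the hard analytic inputs you identify (invertibility of the standard intertwining operator after normalisation, Harish-Chandra's wave-packet theorem for $\HC(G/N)$) are precisely what the paper packages as Theorem~\ref{I_theorem}. Where you diverge is in \emph{how} the triangle identities get verified, and this is a genuine difference. You propose to work spectrally throughout: identify both $\Extn$ and $\mathbf c$ via the Mellin transform on $\widehat L$, claim that $\Extn$ becomes multiplication by $\mathcal A(\chi)^{-1}$ fibrewise, and then invoke Fourier inversion so that the triangle composites become $\mathcal A(\chi)\mathcal A(\chi)^{-1}=\id$. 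The paper instead keeps $\Extn$ in its geometric form (via the explicit kernel $k_0$ in Lemma~\ref{lem-tech-unit-fmla}) and verifies the first triangle identity by an explicit chain of integral manipulations (Lemma~\ref{lem-tech-second-adjoint} through equation~\eqref{eq-first-triangle-for-bernstein6}), which collapse the composite to the single spectral identity $\plus J\plus I=\id$, i.e.\ part~(a) of Theorem~\ref{I_theorem}. The advantage of the paper's route is that it never has to \emph{prove} the spectral form of $\Extn$: that matching --- extension by zero from the open Bruhat cell $\op{N}L\plus{N}$ equals insertion of the inverse intertwiner on each Fourier fibre --- is a real theorem in its own right, and is strictly harder than the identity $\plus J\plus I=\id$ to which the paper reduces. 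If you want to follow your plan as written, you would need to establish that matching, which the paper does not, and you should be aware that it is not a formality.

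Two further points you gloss over that the paper has to engage with seriously. First, normalisation: your $\mathcal A(\chi)$ has to be the reciprocal of the $c$-function (i.e.\ the operator $\plus c^{-1}(\mu)$ of Theorem~\ref{c_theorem2}), not the unnormalised standard intertwiner $\plus J(\mu)$; using the unnormalised one would break the triangle identity. ``Suitably normalised'' hides a precise choice. Second, even on the unitary axis the operators $\plus c^{-1}(\mu)$ and the even-series Plancherel density both vanish at $\mu=0$, so the wave-packet theorem does not apply directly to the integral in \eqref{eq-fourier-pic-B}; the paper handles this with the even/odd decomposition, the normalised-intertwiner involution $W$, and the symmetry trick of Lemma~\ref{lem-symmetry-trick} leading to Theorem~\ref{thm-HC-wave}. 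Your proposal waves at ``the wave-packet theorem'' without flagging this degeneracy, but it is exactly where the analytic content lives, and any version of the spectral argument must deal with it.
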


\begin{remark}
The theorem can be formulated in the context of $\SF$-modules over $\Sc(G)$ and $\Sc(L)$, using \eqref{eq-bernstein-unit1}, but in this context it is \emph{false}. 
\end{remark}

As one might expect in view of the previous remark, the proof of the second adjoint theorem is much more involved than the proof of Frobenius reciprocity, and we defer it to the next section (with some details  deferred further to an appendix).  In the remainder of this section we shall prepare for the proof by presenting a helpful explicit formula for the unit morphism.

\begin{lemma}
\label{lem-tech-unit-fmla}
Let  $f\in \Sc(L)$.  Let $\plus{v}$ and $\op{v}$  be   smooth, compactly supported functions on $\plus{N}$ and  $\op{N}$, respectively, both with total integral $1$,
and  define  
$
f_0\in \Sc( \op{N} L  \plus{N}) $ by   
\begin{equation}
\label{eq-f_0-def}
f_0 \colon \op{n} \cdot \ell \cdot \plus{n} \longmapsto   \op{v}(\op{n}) \cdot f(\ell) \plus{\delta}(\ell)^{-1/2}\cdot  \plus{v}(\plus{n}),
\end{equation}
and then  extend $f_0 $ by zero to obtain a Schwartz function on $G$.  
Let $u$ be a smooth compactly supported function on $G$ with total integral $1$, and define  
\[
k_0 \in \Sc(G) \otimes \Sc(G ) \cong  \Sc(G\times G )  
\] by 
\begin{equation}
\label{eq-k_0-def}
k_0
\colon (g_1 , g_2) \longmapsto   u(g_1 ) f_0 (g_1 g_2) 
\end{equation}
The image of $f\in \Sc(L)$ under the unit morphism \eqref{eq-bernstein-unit1} is equal to the image of  $k_0$ 
under the convolution and integration maps 
\[
 \Sc(G) \otimes \Sc(G )\longrightarrow  \Sc(G)\otimes _G \Sc(G )  
 \longrightarrow 
  \Sc(\op{N}\backslash G)\otimes _G \Sc(G/ \plus{N})  .
  \]
\end{lemma}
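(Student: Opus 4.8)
The plan is to trace through the chain of identifications used in the construction of the map \eqref{eq-bernstein-unit1} and verify that the explicit cocycle representatives given in the statement map correctly at each stage. First I would recall that the unit morphism was built as the composite of three steps: the extension-by-zero map \eqref{eq-extn-by-zero}, the tensor-product morphism \eqref{eq-extn-by-zero2}, and the two integration isomorphisms \eqref{eq-integration-isos} together with the source identification \eqref{eq-opNLplusN}. The function $f_0$ of \eqref{eq-f_0-def} is precisely designed so that under the isomorphism \eqref{eq-opNLplusN}, the element ${1\otimes f_0 \otimes 1}$ in ${{}_{\op{\delta}^{-1/2}}\C}\otimes_{\op{N}}\Sc(\op{N}L\plus{N})\otimes_{\plus{N}}\C_{\plus{\delta}^{1/2}}$ maps to $f\in\Sc(L)$; this is a direct computation using the Fubini-type factorization and the fact that $\op{v}$ and $\plus{v}$ have total integral $1$. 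Hence by definition of \eqref{eq-bernstein-unit1}, the image of $f$ is the image of ${1\otimes f_0\otimes 1}$, viewed after extension by zero to $G$, under the integration isomorphisms \eqref{eq-integration-isos}.

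Next I would show that this agrees with the image of $k_0$ under the stated convolution-then-integration maps. The key point is the standard chain-contraction device already used in the proofs of Proposition~\ref{prop-SF-equals-SF} and Proposition~\ref{Ind_ind_proposition}: the balanced tensor product $\Sc(G)\otimes_G\Sc(G)$ is a quotient of $\Sc(G)\otimes\Sc(G)\cong\Sc(G\times G)$, and the element represented there by $k_0(g_1,g_2)=u(g_1)f_0(g_1g_2)$ maps, under the convolution map $\Sc(G)\otimes_G\Sc(G)\to\Sc(G)$ composed with the quotient to $\Sc(\op{N}\backslash G)\otimes_G\Sc(G/\plus{N})$, to the same element as ${1\otimes f_0\otimes 1}$ after extension by zero. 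Concretely, convolution sends $k_0$ to $g\mapsto\int_G u(g_1)f_0(g_1g)\,dg_1 = (u * f_0)(g)$ — here one must be slightly careful since $f_0$ is only defined (after extension by zero) on all of $G$, but as a Schwartz function on $G$ this convolution is the ordinary one. The role of $u$ with total integral $1$ is exactly that $u*f_0$ and $f_0$ represent the same class in $\Sc(G)\otimes_G\Sc(G/\plus{N})$-type quotients, via the same contraction-operator argument; more precisely, the relevant balanced tensor products over $\op{N}$, $\plus{N}$ and $G$ all admit the contraction built from a function of total integral $1$, so the insertion of $u$ does not change the image.

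The main obstacle I anticipate is bookkeeping: one must keep straight \emph{which} balanced tensor products are being used at each stage (over $\op{N}$, over $\plus{N}$, over $G$, over $L$) and confirm that the extension-by-zero map \eqref{eq-extn-by-zero} is genuinely compatible with these quotients — i.e. that extending $f_0$ by zero from $\op{N}L\plus{N}$ to $G$ and then applying integration over $\op{N}$ on the left and $\plus{N}$ on the right recovers the elements of $\Sc(\op{N}\backslash G)$ and $\Sc(G/\plus{N})$ that one expects. Since $\op{N}L\plus{N}$ is Nash-open in $G$ and its image in $\op{N}\backslash G$ (resp.\ $G/\plus{N}$) is again Nash-open, the integration maps \eqref{eq-integration-isos} restrict appropriately, and the required compatibility follows from the functoriality of extension-by-zero in \cite[Proposition 4.3.1]{AizGour}. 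Assembling these observations gives the claimed formula: the image of $f$ under \eqref{eq-bernstein-unit1} equals the image of $k_0$ under convolution followed by the two integration maps, which is exactly the assertion of the lemma.
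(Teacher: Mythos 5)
Your overall strategy matches the paper's: you set up a commuting diagram relating the source identification \eqref{eq-opNLplusN}, the extension-by-zero map \eqref{eq-extn-by-zero}, the product on $\Sc(G)$, and the integration isomorphisms \eqref{eq-integration-isos}, and then chase the explicit representatives $1\otimes f_0\otimes 1$ and $k_0$ through it. Your first computation — that $1\otimes f_0\otimes 1$ maps to $f$ under \eqref{eq-opNLplusN} — is correct. But the central step contains a genuine computational error.

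You assert that the multiplication map $\Sc(G)\otimes_G\Sc(G)\to\Sc(G)$ sends $k_0$ to $g\mapsto\int_G u(g_1)\,f_0(g_1g)\,\dd g_1$, which you identify with $u*f_0$. This is not what the multiplication map does. Under the identification $\Sc(G)\otimes\Sc(G)\cong\Sc(G\times G)$, convolution of a kernel $k$ is $g\mapsto\int_G k(h,h^{-1}g)\,\dd h$, not $g\mapsto\int_G k(h,g)\,\dd h$. For $k_0(g_1,g_2)=u(g_1)f_0(g_1g_2)$ one has $k_0(h,h^{-1}g)=u(h)\,f_0(h\cdot h^{-1}g)=u(h)\,f_0(g)$, so
\[
\int_G k_0(h,h^{-1}g)\,\dd h = f_0(g)\int_G u(h)\,\dd h = f_0(g).
\]
The image of $[k_0]$ under multiplication is therefore \emph{exactly} $f_0$, not $u*f_0$. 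This is the whole point of the definition \eqref{eq-k_0-def}: $k_0$ is precisely the contraction-operator lift (as in the proof of Proposition~\ref{prop-SF-equals-SF}) of $f_0$ along the isomorphism $\Sc(G)\otimes_G\Sc(G)\cong\Sc(G)$, and the function $u$ with total integral $1$ disappears outright in the computation. Once you see this, the element $1\otimes f_0\otimes 1$ (extended by zero) and the class of $k_0$ hit the \emph{same} element of $\C\otimes_{\op{N}}\Sc(G)\otimes_{\plus{N}}\C$, and commutativity of the diagram finishes the argument.

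Because of the miscalculation, you then needed the claim that "$u*f_0$ and $f_0$ represent the same class" in the relevant quotients, which you justified by appealing to contractions built from functions of total integral $1$. That step does not hold up: the balanced tensor products in play at that stage are over $\op{N}$ and $\plus{N}$ (twisted by $\pnm{\delta}^{\pm 1/2}$), not over $G$, so left-convolution by a compactly supported function on $G$ genuinely changes the class — $u*f_0$ and $f_0$ are distinct elements of $\Sc(\op{N}\backslash G)$ after integrating over $\op{N}$-cosets. The reason the lemma is true is not that $u*f_0\sim f_0$, but that the image of $k_0$ is already $f_0$ on the nose.

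Your last paragraph, on the compatibility of extension-by-zero with the coset integrations, is correct and corresponds to the (unstated but necessary) commutativity of the outer square in the paper's diagram.
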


  \begin{proof} 
Consider the diagram 
 \[
 \xymatrix{
 \C \otimes_{\op{N}}  \Sc(\op{N}L\plus{N}) \otimes_{\plus{N}} \C \ar[dd] \ar[r] &  \C\otimes_{\op{N}}  \Sc(G) \otimes_{\plus{N}}  \C 
 \\
 &  \C\otimes_{\op{N}}  \Sc(G)\otimes _G \Sc(G) \otimes_{\plus{N}}  \C \ar[u]_{\cong}\ar[d]^{\cong} 
 \\
 \Sc(L) \ar[r]_- {\Extn} & \Sc(\op{N}\backslash G) \otimes _G \Sc(G/\plus{N})
 }
 \]
in which the top horizontal map is induced from \eqref{eq-extn-by-zero}, the downward vertical maps are given by integration over $N_\pm$ as in \eqref{eq-opNLplusN} and \eqref{eq-integration-isos},  and the upward vertical map is induced from the product on $\Sc(G)$.  (We have suppressed the decorations describing the left and right $L$-actions, since the $L$-actions are not relevant for the proof.)
The element $1\otimes f_0 \otimes 1$   in the top left 
 maps to $f$  in the bottom left of the diagram.  
 It maps to the same element in the top right as  does $k_0$.  This means that $k_0$  maps to the image of $f$ under the unit map $\Extn$, as required. 
\end{proof}

\section{Proof of the Second Adjoint Theorem}
\label{sec-proof} 

In this section we shall formulate a theorem concerning intertwining integrals and wave packets,  and use it to prove our second adjoint theorem.

\begin{lemma}
\label{lem-J-integral-convergence}
  If $k\in \HC(G/ \op{N})$, then for every $g\in G$ the integral 
\[
(\plus{J}k)(g) =  \int_{ \plus{N}} k(gn)\, \dd n 
\]
converges absolutely and defines a smooth function on $G$.  The resulting   linear map
\[
\plus{J}:\HC(G/\op{N})\longrightarrow  C^\infty(G/\plus{N})  
\]
is continuous. Similarly, the integral
\[
(\op{J}h)(g) = \int_{\op{N}} h(g{n})\, \dd {n}
\]
defines a continuous linear map 
\[
\op{J} :\HC(G/\plus{N}) \longrightarrow C^\infty(G/\op{N})  .
\]
\end{lemma}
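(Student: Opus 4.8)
\emph{Plan.} I will work in the coordinates of Remark~\ref{rem-GmodN-space} and Example~\ref{Xi_GmodN_example}: identify $G/\op{N}$ with $\R^2\setminus\{0\}$ via $g\mapsto ge_2$ (second column), so that $G$ acts by matrix multiplication, $\Xi_{G/\op{N}}(v)=|v|^{-1}$ and $\|v\|=\max\{|v|,|v|^{-1}\}$; likewise identify $G/\plus{N}$ with $\R^2\setminus\{0\}$ via $g\mapsto ge_1$. Writing $n_t=\left(\begin{smallmatrix}1&t\\0&1\end{smallmatrix}\right)$, $v_1=ge_1$, $v_2=ge_2$, one has $gn_t\mapsto v_2+tv_1$, so
\[
(\plus{J}k)(g)=\int_{\R}k(v_2+tv_1)\,\dd t
\]
is the integral of $k$ along the affine line $\ell=\{v_2+tv_1:t\in\R\}$. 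Since $\det g=1$ this line misses the origin --- indeed $\min_{w\in\ell}|w|=|v_1|^{-1}$ --- while $|v_2+tv_1|\to\infty$ at the linear rate $|t|\,|v_1|$. As $\ell$ depends only on $v_1=ge_1$ (replacing $t$ by $t+s$ reparametrizes $\ell$; equivalently $\plus{J}k$ is right-$\plus{N}$-invariant), $\plus{J}k$ descends to a function on $G/\plus{N}$.

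First I would establish convergence together with the bound $|(\plus{J}k)(g)|\le C_Q\,q(k)$ for $g$ in a compact set $Q$. This is immediate from the defining estimate $|k(v)|\le q_p(k)\,|v|^{-1}(1+\log\|v\|)^{-p}$, valid for every $p\geq 0$ with $q_p$ a continuous seminorm on $\HC(G/\op{N})$: for $g\in Q$, the bounded-$t$ part of $\ell$ stays in a fixed compact subset of $\R^2\setminus\{0\}$ (on which $k$ is controlled by a seminorm), and on the remaining part, where $|v_2+tv_1|\geq c_Q|t|$, the estimate bounds the integrand by a constant times $|t|^{-1}(\log|t|)^{-p}$, integrable as soon as $p\geq 2$.

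Next, for smoothness of $\plus{J}k$ and continuity of $\plus{J}\colon\HC(G/\op{N})\to C^\infty(G/\plus{N})$, I would differentiate under the integral sign. Parametrizing $G/\plus{N}$ by $w=v_1$ and taking $v_2=Rw/|w|^2$ with $R$ the rotation by a right angle, one has $(\plus{J}k)(w)=\int_{\R}k(\phi(w,t))\,\dd t$ where $\phi(w,t)=Rw/|w|^2+tw$. Applying $\partial_w^\alpha$ to the integrand gives, by the chain rule, a finite sum of terms $(\partial^\beta k)(\phi(w,t))\,P_\beta(w,t)$ with $|\beta|\leq|\alpha|$, where $P_\beta$ is smooth in $w$ and locally bounded away from $w=0$, and is a polynomial in $t$ of degree at most $|\beta|$ (only the summand $tw$ in $\phi$ produces powers of $t$, at most one per $w$-differentiation). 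The essential point is that the seminorms defining $\HC(G/\op{N})$ control every differential operator commuting with the Euler field, i.e.\ every $v^\gamma\partial_v^\beta$ with $|\gamma|=|\beta|$: indeed $\U(\germ{g})$ and $\U(\germ{l})$ act on $\R^2\setminus\{0\}$ through the operators $v_i\partial_{v_j}$ and the Euler field, which generate exactly this algebra. Summing the resulting bounds over $|\gamma|=|\beta|$ yields, for every $p$,
\[
|(\partial^\beta k)(v)|\le C\,q(k)\,|v|^{-1-|\beta|}(1+\log\|v\|)^{-p}
\]
with $q$ a continuous seminorm. Feeding this in, along with $|\phi(w,t)|\geq\max\{|w|^{-1},|t|\,|w|\}$ and $|P_\beta(w,t)|\leq C_Q(1+|t|)^{|\beta|}$ for $w\in Q$, the integrand for $\partial_w^\alpha$ is again bounded by a constant times $|t|^{-1}(\log|t|)^{-p}$ for $|t|$ large, and is bounded on the complementary compact range; so the $t$-integral converges locally uniformly in $w$, and $\sup_{w\in Q}|\partial_w^\alpha(\plus{J}k)(w)|$ is dominated by a finite sum of continuous seminorms of $k$. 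This proves at once that $\plus{J}k$ is smooth on $G/\plus{N}$ (hence on $G$) and that $\plus{J}$ is continuous. The statements about $\op{J}$ then follow from the symmetry whereby conjugation by the Weyl element $\left(\begin{smallmatrix}0&-1\\1&0\end{smallmatrix}\right)$ interchanges $\plus{N}$ and $\op{N}$.

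I expect the main obstacle to be exactly this last estimate: one must check that the extra decay $|v|^{-|\beta|}$ of $\partial^\beta k$ precisely offsets the degree-$|\beta|$ growth in $t$ of the coefficients $P_\beta$ coming from differentiating the parametrization $\phi$. Providing that decay is the role of the observation about Euler-commuting differential operators, and it is this bookkeeping --- rather than any deeper analytic point --- that is the heart of the argument.
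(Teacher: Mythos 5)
Your proof is correct, and the convergence estimate is essentially the paper's argument in different coordinates: the paper writes the integrand via the Iwasawa decomposition and computes $\pnm{\delta}(ka\plus{n})$ explicitly (obtaining a bound of the form $(e^{-2t}+x^2e^{2t})^{-1/2}(1+\log(\cdot))^{-p}$), while you transport to $\R^2\setminus\{0\}$ and observe $|v_2+tv_1|^2 = |w|^{-2}+t^2|w|^2$; these are the same computation. Where you genuinely diverge is in the treatment of smoothness and continuity. The paper's route is shorter but more indirect: for smoothness it differentiates under the integral by \emph{left-invariant} vector fields from $\U(\germ g)$, which keeps the integrand inside $\HC(G/\op{N})$ so that the same pointwise bound \eqref{eq-J-estimate} applies to the differentiated integrand with no new bookkeeping; and for continuity it appeals to the closed graph theorem plus dominated convergence. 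You instead differentiate in the Euclidean coordinate $w$ via a chosen section $w\mapsto\phi(w,\cdot)$, which produces $t$-dependent Jacobian factors, and you handle these by the nice structural observation that $\U(\germ g)$ and $\U(\germ l)$ generate exactly the $\mathbb{G}_m$-invariant (Euler-commuting) differential operators on $\R^2\setminus\{0\}$, so that the seminorms of $\HC(G/\op{N})$ force $|\partial^\beta k(v)|\lesssim |v|^{-1-|\beta|}(1+\log\|v\|)^{-p}$, whose extra decay cancels the degree-$|\beta|$ growth of $P_\beta(w,t)$. This yields explicit locally uniform seminorm estimates establishing smoothness and continuity simultaneously, without invoking the closed graph theorem. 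Both arguments are valid; the paper's is slicker because differentiating by left-invariant fields avoids the Jacobian bookkeeping altogether, whereas yours is more self-contained and makes the structure of the derivative bounds explicit.
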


These are the standard intertwining integrals considered throughout the representation theory of reductive groups, although it is more common to consider them as defined on individual parabolically induced representations rather than on the inducing bimodule as we are doing here.

\begin{remark} 
The Fr\'echet spaces $C^\infty (G/\plus{N})$ and $C^\infty (G/\op{N})$  carry continuous and indeed smooth left actions of $G$, but they are not  $\SF$-modules over $\Sc(G)$, let alone $\HC(G)$.  
\end{remark}

The proof of the lemma for general parabolic subgroups of general reductive groups is a bit involved, but for $G= \SL(2,
\R)$ it is straightforward: 

\begin{proof}[Proof of Lemma~\ref{lem-J-integral-convergence}]
We'll consider the first integral (the second is handled in the same way).  In view of the Iwasawa decomposition $G = KA\plus{N}$ it suffices to consider $g$ of the form $g=ka$,  and in order to estimate the integrand we need to solve for $a_1$ in 
\[
ka\plus{n}  = k_1 a_1 \op{n} .
\]
Multiplying the matrices on both sides of this equation on the left  by their transposes (which eliminates $k$ and $k_1$) we find that if 
\[
a = \begin{bmatrix} e^t & 0 \\ 0 & e^{-t} \end{bmatrix} 
\quad \text{and}\quad 
\plus{n} = \begin{bmatrix}1& x \\ 0 & 1 \end{bmatrix} 
\]
then
\begin{equation}
\label{eq-iwasawa-matrix-calc}
a_1^2 =  \begin{bmatrix} (e^{-2t} + x^2 e^{2t})^{-1/2}  & 0 \\ 0 & (e^{-2t} + x^2 e^{2t})^{1/2}\end{bmatrix}  .
\end{equation}
So according to the definition of the Harish-Chandra space $\HC(G/\op{N})$,  
\begin{equation}
\label{eq-J-estimate}
 | h(g{n}) |  \le \text{constant}_{h,p} \cdot  (e^{-2t} + x^2 e^{2t})^{-1/2}  \cdot (1 + \tfrac{1}{2}\log (e^{-2t} + x^2 e^{2t}) )^{-p}
\end{equation}
for all large $x$ (large enough that the logarithm is positive).  If $p>1$, then the bound is an integrable function of $x$, as required.  Smoothness of $\op{J}h$  follows by differentiating under the integral, and continuity follows from pointwise continuity of $\op{J}h$ as a function of $h$ and the closed graph theorem.
\end{proof}
 
Here is the more elaborate result about the intertwining integrals and wave packets that we shall require.  It fits into Harish-Chandra's general theory of the Plancherel formula, but we shall give a direct proof for $G=\SL(2,\R)$ in the final section of the paper.

\begin{theorem}\label{I_theorem}
There are continuous bimodule maps 
\[ 
\pnm{I} : \HC(G/\pnm N) \to \HC(G/ \mnp N) 
\]
such that: 
\begin{enumerate}[\rm(a)]
\item The map $\pnm I$ is a right-inverse to the standard intertwiner $\pnm J$:
\[ \pnm J \circ \pnm I = \id_{\HC(G/\pnm N)}.\]
\item The maps $\plus I$ and $\op I$ are $L^2$-adjoints of one another: 
\[
\langle k, \plus{I} h \rangle_{L^2(G/\op{N})} = \langle \op{I} k , h \rangle_{L^2 (G/\plus{N})}
\]
for all $h\in \HC(G/\plus N)$ and $k\in \HC(G/\op N)$.
\item The range of the  map 
\[
B:\HC(G/\plus{N})\otimes_{L} \HC(\op{N}\backslash G) \longrightarrow  C^\infty (G)
\]
defined by the formula
\[
B \colon h\otimes k \longmapsto \bigl [ g \mapsto   \langle k^*, g^{-1}  \plus{I}h  \rangle_{L^2 (G/\op{N})} \bigr ] 
\]
  lies in the vector subspace $\HC(G)\subseteq C^\infty (G)$, and $B$ is continuous as a map into $\HC(G)$.\hfil\qed
  \end{enumerate}
  \end{theorem}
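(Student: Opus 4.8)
\medskip
\noindent The plan is to prove the theorem by passing to the spectral picture supplied by Harish-Chandra's Plancherel theory for $G=\SL(2,\R)$ --- which, as the introduction warned, is not elementary, and which I would carry out for $\SL(2,\R)$ in Section~\ref{appendix-fourier}. In that picture the two bimodules $\HC(G/\pnm N)$ are realised as spaces of Schwartz-class sections of a field of representations over the tempered dual $\widehat{L}$ of the Levi $L$, the fibre over a unitary character $\chi$ being the principal series $\Ind_{\pnm P}^G\chi$; the intertwining integrals $\pnm J$ act fibrewise as the classical standard intertwining operators $\Ind_{\mnp P}^G\chi\to\Ind_{\pnm P}^G\chi$; and $\pnm I$ is simply \emph{defined} to act fibrewise as the inverse of that family. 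Everything then reduces to controlling the poles, zeros, and growth of the intertwining operators in the spectral parameter.

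Concretely, the Iwasawa decomposition $G=KA\pnm N$ gives a diffeomorphism $G/\pnm N\cong K\times A$, so I would expand $h\in\HC(G/\pnm N)$ into its Fourier series in the $K=\operatorname{SO}(2)$ variable --- producing $K$-types $e^{in\theta}$, graded by the parity $\epsilon=n\bmod 2$ --- and apply the Euclidean Fourier transform in the variable $t$, where $a=\diag(e^t,e^{-t})$. A routine estimate with the seminorms \eqref{HC_GmodN_seminorms} (on $A$ one has $\Xi_{G/\pnm N}=(\pnm\delta)^{-1/2}=e^{\mp t}$ and $\log\|x\|=|t|$) identifies $\HC(G/\pnm N)$ with $\bigoplus_{\epsilon\in\{0,1\}}\Sc(\R_\nu)\otimes\Sc(\Z_\epsilon)$, where $\Z_\epsilon=\{n\in\Z:n\equiv\epsilon\}$; in the fibre over $\nu$ the group $G$ acts by $\pi^{\pnm P}_{i\nu,\epsilon}$ and $\pnm J$ acts by the intertwining operator $\mathcal{A}(\nu)\colon\pi^{\mnp P}_{i\nu,\epsilon}\to\pi^{\pnm P}_{i\nu,\epsilon}$, which is diagonal on $K$-types and multiplies the $n$-th one by an explicit ratio of Gamma functions $c_n(\nu)$. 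That $\pnm J$ lands only in $C^\infty(G/\pnm N)$ rather than in $\HC(G/\pnm N)$, as emphasised in the remark after Lemma~\ref{lem-J-integral-convergence}, reflects the fact that $\mathcal{A}(\nu)$ is merely meromorphic in $\nu$.

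The heart of the matter is that a direct computation with the $c$-functions shows $c_n(\nu)$ to be finite and non-zero for every real $\nu$ and every admissible $n$, \emph{except} at $\nu=0$ in the even part, where $c_0$ has a simple pole --- this is the reducibility point of the principal series, equivalently the one zero (of order two) of the Plancherel density. Hence the inverse family $\mathcal{A}(\nu)^{-1}$, which up to a positive constant equals the Plancherel density times the analogous intertwiner for the opposite parabolic, is \emph{holomorphic} on $\R$, with $\mathcal{A}(0)^{-1}$ degenerate but finite (it kills the spherical $K$-type), and is \emph{tempered}: it and all its $\nu$-derivatives are polynomially bounded and preserve rapid decay in $n$, again by explicit estimates on the $c$-functions. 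Defining $\pnm I$ to act fibrewise by $\mathcal{A}(\nu)^{-1}$, temperedness makes $\pnm I$ a continuous bimodule map $\HC(G/\pnm N)\to\HC(G/\mnp N)$, and the identity $\mathcal{A}(\nu)\,\mathcal{A}(\nu)^{-1}=\id$ --- in which the zero of $\mathcal{A}(\nu)^{-1}$ at $\nu=0$ exactly cancels the pole of $\mathcal{A}(\nu)$, so that no principal-value subtlety arises when $\pnm J$ is applied to the Schwartz section $\pnm I h$ --- gives part (a). Carrying out these $c$-function computations and temperedness estimates, together with the underlying identification of $\HC(G/\pnm N)$ with a Schwartz space of sections (the wave-packet isomorphism), is the main obstacle; it is the substance of Section~\ref{appendix-fourier}.

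Part (b) then follows formally: one verifies the adjointness relation $\langle \pnm J x,y\rangle_{L^2(G/\pnm N)}=\langle x,\mnp J y\rangle_{L^2(G/\mnp N)}$ for the intertwining integrals --- both sides unfold, via $\int_{G/N}\int_N=\int_G$, to $\int_G\overline{x(g)}\,y(g)\,\dd g$ --- and then substitutes $x=\plus I h$, $y=\op I k$ and invokes $\pnm J\,\pnm I=\id$; equivalently, on the spectral side the classical relation $\mathcal{A}_{+}(\nu)^*=\mathcal{A}_{-}(\nu)$ on the unitary axis, together with the Parseval formula (whose measure here is Lebesgue $\dd\nu$, since the inducing character ranges over all of $\widehat{L}$), makes it transparent. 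For part (c), observe that for $h\in\HC(G/\plus N)$ and $k\in\HC(\op N\backslash G)$ the function $g\mapsto B(h\otimes k)(g)=\langle k^*,g^{-1}\plus I h\rangle_{L^2(G/\op N)}$ is a matrix coefficient of the tempered representation $L^2(G/\op N)$ between the two Schwartz-class vectors $k^*$ and $\plus I h$ of $\HC(G/\op N)$; expanding both vectors spectrally exhibits $B(h\otimes k)$ as a wave packet of principal-series matrix coefficients with Schwartz-class coefficient functions, and Harish-Chandra's wave-packet estimates (once more specialised to $\SL(2,\R)$ in Section~\ref{appendix-fourier}) bound such a wave packet, and its invariant derivatives, by a constant multiple of $\Xi_G(g)(1+\log\|g\|)^{-p}$ for every $p\ge 0$, the constant being a continuous seminorm in $(h,k)$ --- which is precisely the assertion that $B$ maps continuously into $\HC(G)\subseteq C^\infty(G)$. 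That $B$ is a bimodule map is immediate from the definitions once $\plus I$ is known to be one.
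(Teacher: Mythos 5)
Your proposal takes essentially the same route as the paper's proof in Section~\ref{appendix-fourier}: Fourier transform along $A$ (after the Iwasawa decomposition $G/\pnm{N}\cong K\times A$) to identify $\HC(G/\pnm{N})$ with $\Sc(\mathfrak{a}^*,\Sc(K))$; realize $\pnm{J}$ fibrewise as multiplication by the meromorphic $c$-functions $\pnm{c}^{(j)}(\mu)$ on each $K$-type; define $\pnm{I}$ via the reciprocals; and prove (b) from the conjugation symmetry of the $c$-functions and unitarity of the Plancherel transform. This matches the structure of Theorems~\ref{c_theorem0}--\ref{c_theorem3}.

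There are two places where your argument is not quite right. The minor one: you say the pole at $\nu=0$ is in $c_0$ and that $\mathcal{A}(0)^{-1}$ ``kills the spherical $K$-type.'' In fact the formula \eqref{cj_equation} shows that $\pnm{c}^{(j)}(\mu)$ has a simple pole at $\mu=0$ for \emph{every} even $j$ (the numerator $\Gamma(\mp i\mu/2)$ blows up, while the denominator stays finite at half-integers), so $\pnm{c}^{-1}(0)$ vanishes on the whole even summand $\Sc(K)_\even$, not just on $\Sc_0(K)$.

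The more serious gap is in part (c). You invoke ``Harish-Chandra's wave-packet estimates'' as though they directly control $\int_{\mathfrak{a}^*}\langle f_1(\mu), g^{-1}\plus{c}^{-1}(\mu)f_2(\mu)\rangle\,d\mu$, but the wave packet theorem bounds $\int_{\mathfrak{a}^*}\langle f_1(\mu), g^{-1}f_2(\mu)\rangle\,\alpha(\mu)\,d\mu$, with the Plancherel density $\alpha$ present. For the odd principal series $\alpha$ is nowhere vanishing, and one absorbs $\alpha^{-1}$ harmlessly; but for the even series $\alpha$ vanishes to order two at $\mu=0$, so the naive substitution $f_2\mapsto \alpha^{-1}f_2$ breaks down. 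The cancellation you gesture at (the zero of $\plus{c}^{-1}$ against the pole of the Plancherel density) produces only a simple zero of $\plus{c}^{-1}f_2$, not the second-order vanishing that the substitution would require. The paper closes this gap with an extra device: a normalized-intertwiner involution $W$ on $\Sc(\mathfrak{a}^*,\Sc(K)_\even)$, under which the pairing of a $W$-symmetric function against a $W$-antisymmetric one vanishes identically (Lemma~\ref{lem-symmetry-trick}), after which the remaining two cases (both antisymmetric, hence both vanishing at $\mu=0$; or one symmetric paired against one vanishing to second order) satisfy the hypotheses of Theorem~\ref{thm-HC-wave}. This symmetrization step is essential, and your argument as written does not supply a substitute for it.
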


\begin{remark}
The formula in part (c), which expresses the counit of the second adjunction in terms of the inverse of the standard intertwining operator, is an analogue in our tempered, real context  of the formula \cite[Corollary 7.8]{BezKaz} of Bezrukavnikov and Kazhdan.
\end{remark}

Taking Theorem~\ref{I_theorem} for granted, we  shall  now prove  Theorem~\ref{thm-second-adjoint} by calculating that the morphisms  $B$ and $\Extn$ are the counit and unit of an adjunction.  That is, we shall show that the compositions
\begin{multline}
\label{eq-first-triangle-relation}
\HC(G/\plus{N})\otimes_{L} \HC(L)
	\xrightarrow{1\otimes \Extn} 
			 \HC(G/\plus{N})\otimes_{L} \HC(\op{N}\backslash G)\otimes_{G} \HC(G/\plus{N})
 \\
\xymatrix{ \ar[r]^-{B\otimes 1}  & \HC(G)\otimes_{G} \HC(G/\plus{N}) \ar[r]^-{\conv_G} &  \HC(G/\plus{N})
}
\end{multline}
and
\begin{multline}
\label{eq-second-triangle-relation}
\HC(L)\otimes_{L} \HC(\op{N}\backslash G) \xrightarrow{\Extn\otimes 1}  \HC(\op{N}\backslash G) \otimes_{G} \HC(G/\plus{N})\otimes_{L} \HC(\op{N}\backslash G)
\\
\xymatrix{
 \ar[r]^-{1\otimes B}  & \HC(\op{N}\backslash G)\otimes_{G} \HC(G) \ar[r]^-{\conv_G} &  \HC(\op{N}\backslash G) 
 }
\end{multline}
are the canonical isomorphisms.  Compare  \cite[Chapter IV]{MacLane}  again.  

Throughout the following calculations we shall regard functions on homogenous spaces of $G$ as functions on $G$ itself that are constant on the appropriate left or right cosets.

\begin{lemma}
\label{lem-tech-second-adjoint}
Suppose that  $h \in \HC(G/\plus{N})$ and let 
\[
h_1 = \plus{I}h \in \HC(G/\op{N}).
\]
 Suppose  that an element 
$
k\in \HC(\op{N}\backslash G )\otimes _G \HC(G/\plus{N})
$
 is obtained by integrating a function
\[
k_1\in \Sc(G \times G/\plus{N})
\]
over left $\op{N}$-cosets in the first variable \textup{(}and then projecting to the balanced tensor product\textup{)}.  The image of the element 
\[
h\otimes k \in \HC(G/\plus{N})\otimes _L \HC(\op{N}\backslash G) \otimes _G \HC(G/\plus{N})
\]
under the map \eqref{eq-first-triangle-relation} is the right $\plus{N}$-invariant function 
\begin{equation}
\label{eq-first-triangle-for-bernstein}
g \longmapsto  \int_G \int_G k_1(\gamma_1^{-1}, \gamma_1\gamma_2^{-1} g) h_1 (\gamma_2 )\, \dd \gamma_1 \dd \gamma_2 .
\end{equation}
\end{lemma}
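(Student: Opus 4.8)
The plan is to compute the effect of \eqref{eq-first-triangle-relation} on $h\otimes k$ directly; note that the map $1\otimes\Extn$ plays no role here, since $h\otimes k$ already lies in the middle term of \eqref{eq-first-triangle-relation}, so what must be computed is $\conv_G\circ(B\otimes 1)$ applied to $h\otimes k$. Because $\Sc(G)\otimes\Sc(G/\plus{N})$ is dense in $\Sc(G\times G/\plus{N})$ and every map in \eqref{eq-first-triangle-relation} is continuous, I would first reduce to the case $k_1=\sum_i a_i\otimes b_i$ a finite sum with $a_i\in\Sc(G)$ and $b_i\in\Sc(G/\plus{N})$, passing to the limit at the end. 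Integrating over left $\op{N}$-cosets in the first variable then exhibits $k$ as $\sum_i[\,\hat a_i\otimes b_i\,]$ in $\HC(\op{N}\backslash G)\otimes_G\HC(G/\plus{N})$, where $\hat a_i(g)=\int_{\op{N}}a_i(ng)\,\dd n\in\Sc(\op{N}\backslash G)$.

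For a single term, $(B\otimes 1)$ sends $h\otimes\hat a_i\otimes b_i$ to $B(h\otimes\hat a_i)\otimes b_i$, which lies in $\HC(G)\otimes_G\HC(G/\plus{N})$ by Theorem~\ref{I_theorem}(c), and $\conv_G$ then produces the right-$\plus{N}$-invariant function $g\mapsto\int_G B(h\otimes\hat a_i)(\gamma)\,b_i(\gamma^{-1}g)\,\dd\gamma$. Writing $h_1=\plus{I}h\in\HC(G/\op{N})$ and unwinding the formula for $B$ from Theorem~\ref{I_theorem}(c)---using $\overline{\hat a_i{}^*(\dot y)}=\hat a_i(y^{-1})$ and $(\gamma^{-1}{\cdot}h_1)(\dot y)=h_1(\gamma y)$, and then folding the defining $\op{N}$-integral of $\hat a_i$ back into an integral over all of $G$ by means of the right $\op{N}$-invariance of $h_1$---one finds
\[
B(h\otimes\hat a_i)(\gamma)=\int_G a_i(y^{-1})\,h_1(\gamma y)\,\dd y .
\]
Substituting this into the previous expression, summing over $i$, and recognising $\sum_i a_i(y^{-1})\,b_i(\gamma^{-1}g)=k_1(y^{-1},\gamma^{-1}g)$, the image of $h\otimes k$ becomes
\[
g\longmapsto\int_G\int_G k_1(y^{-1},\gamma^{-1}g)\,h_1(\gamma y)\,\dd\gamma\,\dd y .
\]
Finally the substitution $\gamma_1=y$, $\gamma_2=\gamma y$ (so $\gamma=\gamma_2\gamma_1^{-1}$, $\gamma^{-1}g=\gamma_1\gamma_2^{-1}g$, $\gamma y=\gamma_2$, with $\dd\gamma_1\,\dd\gamma_2=\dd y\,\dd\gamma$) turns this into \eqref{eq-first-triangle-for-bernstein}; right-$\plus{N}$-invariance in $g$ is visible from the fact that the second slot of $k_1$ lies on $G/\plus{N}$.

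The algebraic content is just this reorganisation of integrals, so I expect the main obstacle to be the analytic bookkeeping: justifying the absolute convergence of the iterated integrals, and hence the Fubini interchanges, the change of variables, and the passage to the limit over the finite-sum approximations of $k_1$. Here $k_1$ is Schwartz in both of its variables while $h_1=\plus{I}h$ is only Harish-Chandra Schwartz, so one must weigh the rapid decay of $k_1$ against the Harish-Chandra estimates for $h_1$, using the $\plus{N}$-integrability of such functions from Lemma~\ref{lem-J-integral-convergence} to control the fibre direction of $G\to G/\plus{N}$. The same estimates show that the final double integral depends only on $k_1$, not on the chosen decomposition, and is continuous in $k_1$, so the limiting argument goes through.
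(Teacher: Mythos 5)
Your proof is correct and follows essentially the same computation as the paper's: unwinding $B$ from Theorem~\ref{I_theorem}(c), folding the $\op{N}$-integral back using right-$\op{N}$-invariance of $h_1 = \plus{I}h$, applying $\conv_G$, and making the same change of variables $\gamma_1 = y$, $\gamma_2 = \gamma y$ at the end. The only cosmetic difference is that you decompose $k_1$ into a finite sum of elementary tensors $\sum_i a_i\otimes b_i$ and appeal to density, whereas the paper carries $k_1$ along directly as an integral kernel throughout, sidestepping any limit argument.
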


\begin{proof}
If we apply first the map $B$, then we obtain from $h\otimes k$ the element in $\HC(G)\otimes_G \HC(G/\plus{N})$ represented by the function 
\[
(g_1, g) \longmapsto 
	\int _{G/\op{N}} \left ( \int _{\op{N}} k_1(nx^{-1}, g) \, \dd n \right )h_1 (g_1x)\, \dd x .
\]
Since $h_1$ is right $\op{N}$-invariant, we can combine the integrals and write the above function as 
\[
(g_1, g) \longmapsto 
	\int _{G}  k_1(g_2{}^{-1}, g) h_1 (g_1g_2)\, \dd g_2 .
\]
Applying to this the convolution map 
\[
\HC(G) \otimes \HC(G/\plus{N}) \longrightarrow \HC (G/\plus{N})
\]
we obtain the element of $\HC(G/\plus{N})$ represented by the function 
\[
g \longmapsto \int _G \int_G k_1(g_2{}^{-1}, g_1{}^{-1}g) h_1 (g_1g_2)\, \dd g_1\dd g_2
\]
Now make the substitutions $\gamma_1 = g_2$ and $\gamma_2 = g_1g_2$ to obtain \eqref{eq-first-triangle-for-bernstein}.
\end{proof}

We are going to apply   Lemma~\ref{lem-tech-second-adjoint} in the case where the element  $k$ has  the form 
\[
k = \Extn(f)
\]
 for some $f\in \Sc(L)$.  In this case, according to  Lemma~\ref{lem-tech-unit-fmla}, and using the notation introduced there,  the function $k_1$ in the statement of the lemma  above may be defined by 
 \[
 k_1(g_1,g_2) = \int _{\plus{N}} u(g_1) f_0(g_1g_2n)\, \dd n.
 \]
From the formula 
 \[
  k_1(\gamma_1^{-1}, \gamma_1\gamma_2^{-1} g) 
  =
  \int_{\plus{N}} u(\gamma_1^{-1} ) f_0 (\gamma_2^{-1}gn)\, \dd n
  \]
we find that after integrating over $\gamma_1\in G $  in \eqref{eq-first-triangle-for-bernstein} we obtain the function
\begin{equation*}
g \longmapsto   \int_G \int _{\plus{N}} f_0(\gamma_2^{-1} g n) h_1 (\gamma_2 )\,  \dd \gamma_2 \dd n  ,
\end{equation*}
 which in view of  the change of variables $\gamma = \gamma_2^{-1} g$ is equal to the function 
 \begin{equation}
\label{eq-first-triangle-for-bernstein2}
g \longmapsto   \int_G \int _{\plus{N}} f_0(\gamma n) h_1 (g \gamma^{-1} )\,  \dd \gamma \dd n  .
\end{equation}
To proceed further we shall use the following simple  integration formula:
\begin{lemma} 
\label{lem-int-fmla}
If $\varphi$ is any integrable function on $G$, then 
\[
\int _G \varphi (\gamma)\, \dd \gamma   = \int_{\op{N}} \int_{L} \int_{\plus{N}} \varphi (\op{n} \ell \plus{n} )\plus{\delta}(\ell) \,  \dd \op{n} \dd \ell \dd \plus{n} .
\]
\end{lemma}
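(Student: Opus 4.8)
The plan is to recognize the asserted identity as the Gauss (big-cell Bruhat) integration formula for $G=\SL(2,\R)$, and to establish it by an explicit change of variables in matrix coordinates. Write a general matrix as $g=\left[\begin{smallmatrix}a&b\\c&d\end{smallmatrix}\right]$ with $ad-bc=1$, and let $\Omega=\{g\in G:a\neq 0\}$, a Nash-open subset of $G$ whose complement $\{a=0\}$ is a proper algebraic subvariety, hence Haar-null. The first step is to check that the multiplication map $(\op n,\ell,\plus n)\mapsto\op n\ell\plus n$ is a Nash diffeomorphism of $\op N\times L\times\plus N$ onto $\Omega$: writing $\ell=\diag(\alpha,\alpha^{-1})$ and letting $x$, $y$ be the off-diagonal entries of $\plus n$, $\op n$, a single matrix product gives
\[
\op n\,\ell\,\plus n=\left[\begin{smallmatrix}\alpha&\alpha x\\ \alpha y&\alpha xy+\alpha^{-1}\end{smallmatrix}\right],
\]
so that in the coordinates $(a,b,c)$ on $\Omega$ (with $d=(1+bc)/a$) the map is simply $(\alpha,x,y)\mapsto(a,b,c)=(\alpha,\alpha x,\alpha y)$, visibly a diffeomorphism. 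Since by monotone convergence it suffices to treat $\varphi\geq 0$ (or $\varphi\in C_c(G)$), the identity reduces to comparing two measures on $\Omega$.

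Next I would compute Haar measure on $\Omega$ in the coordinates $(a,b,c)$. Starting from the Haar measure $|\det g|^{-2}\,da\,db\,dc\,dd$ on $\GL(2,\R)$ and using the determinant $t=ad-bc$ as a ``slicing'' coordinate in place of $d$ (so that $da\,db\,dc\,dd=|a|^{-1}\,da\,db\,dc\,dt$), one finds that the fibre measure over $t=1$, namely Haar measure on $\SL(2,\R)$ restricted to $\Omega$, is proportional to $|a|^{-1}\,da\,db\,dc$. Pulling this back along $(a,b,c)=(\alpha,\alpha x,\alpha y)$ introduces the Jacobian $\det\!\left[\begin{smallmatrix}1&0&0\\ x&\alpha&0\\ y&0&\alpha\end{smallmatrix}\right]=\alpha^2$, whence
\[
\frac{da\,db\,dc}{|a|}=\frac{\alpha^2}{|\alpha|}\,d\alpha\,dx\,dy=\alpha^2\cdot\frac{d\alpha}{|\alpha|}\,dx\,dy.
\]
Here $d\alpha/|\alpha|$ is Haar measure on $L\cong\R^\times$ and $dx$, $dy$ are Haar measures on $\plus N$, $\op N$, and by \eqref{delta_equation} one has $\plus\delta(\diag(\alpha,\alpha^{-1}))=\alpha^2$, so this is exactly $\plus\delta(\ell)\,d\op n\,d\ell\,d\plus n$; after normalizing the three Haar measures so that the implicit positive constant equals $1$, the formula follows.

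I expect the only genuine difficulty to be the normalization bookkeeping: identifying the fibre measure of the determinant map $\GL(2,\R)\to\R^\times$ with Haar measure on $\SL(2,\R)$, keeping track of the absolute values (as $L$ is disconnected), and confirming that the modular factor produced really is $\plus\delta$ and not $\plus\delta^{-1}$ or $\plus\delta^{\pm 1/2}$. As a cross-check, and as an alternative coordinate-free route, one can instead invoke the $G$-invariant measure $d\dot g$ on $\op N\backslash G$ (which exists because $G$ and $\op N$ are both unimodular), write $\int_G\varphi(\gamma)\,d\gamma=\int_{\op N\backslash G}\bigl(\int_{\op N}\varphi(\op n\,\dot g)\,d\op n\bigr)\,d\dot g$, and identify $d\dot g$ on the open dense subset $\op N\backslash\Omega\cong L\times\plus N$: right $\plus N$-invariance forces it to have the form $\rho(\ell)\,d\ell\,d\plus n$, and right $L$-invariance together with \eqref{delta_equation} forces $\rho(\ell)\propto\plus\delta(\ell)$, giving the same conclusion.
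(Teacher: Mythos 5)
Your argument is correct, and it differs from the paper's treatment only in that the paper does not prove this lemma at all: it simply cites a standard reference (Knapp, \emph{Lie Groups Beyond an Introduction}, Proposition 8.4.5) for the integration formula on the open Bruhat cell. Your explicit computation is a perfectly good self-contained replacement. The key points all check out: the matrix product $\op{n}\,\ell\,\plus{n}$ does give $(a,b,c)=(\alpha,\alpha x,\alpha y)$, so the big cell $\{a\neq 0\}$ is conull and the map is a diffeomorphism onto it; the Haar measure on $\SL(2,\R)$ in these coordinates is indeed $|a|^{-1}\,da\,db\,dc$ (your $\GL\to\R^\times$ slicing argument is a clean way to see this); and the Jacobian factor $\alpha^2$ combines with $|\alpha|^{-1}$ to give $\alpha^2\cdot d\alpha/|\alpha|$, which is exactly $\plus\delta(\ell)\,d\ell$ by the paper's formula $\plus\delta(\diag(\alpha,\alpha^{-1}))=\alpha^2$. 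The absolute-value bookkeeping is handled correctly, so the worry you flag at the end does not materialize. Your coordinate-free cross-check via the $G$-invariant measure on $\op{N}\backslash G$ is essentially the route the cited Knapp proposition takes for a general reductive group, so either version would be acceptable here; the explicit $2\times 2$ calculation has the advantage of pinning down the normalizations with no ambiguity, which is useful since the paper's proof of the second adjoint theorem depends on the exact constant.
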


\begin{proof}
See for example \cite[Proposition 8.4.5]{KnappBeyond}.
\end{proof}

Keeping in mind the definition of the function $f_0$ that appears in  \eqref{eq-first-triangle-for-bernstein2}, by applying Lemma~\ref{lem-int-fmla} and    carrying out the integration over $\op{N}$ in Lemma~\ref{lem-int-fmla} we find that 
 the function \eqref{eq-first-triangle-for-bernstein2} is expressible as 
 \begin{equation*}
g \longmapsto    \int_L  \int _{\plus{N}}\int _{\plus{N}} f(\ell )\plus{\delta}(\ell)^{1/2} \plus{v} ( \plus{n} n) h_1 (g \plus{n}^{-1} \ell^{-1}  )\,   \dd \ell\dd \plus{n}  \dd n   ,
\end{equation*}
and integrating over the $n$-variable gives
 \begin{equation*}
g \longmapsto   \int_L  \int _{\plus{N}} f(\ell )\plus{\delta}(\ell)^{1/2}   h_1 (g \plus{n}^{-1} \ell^{-1}  )\,   \dd \ell  \dd \plus{n}  ,
\end{equation*}
or equivalently 
 \begin{equation*}
g \longmapsto   \int_L  \int _{\plus{N}} f(\ell )\plus{\delta}(\ell)^{-1/2}   h_1 (g \ell^{-1} \plus{n}^{-1}   )\,   \dd \ell  \dd \plus{n}  .
\end{equation*}
Carrying out the integration over the $\plus{n}$ gives 
 \begin{equation}
\label{eq-first-triangle-for-bernstein6}
g \longmapsto   \int_L   f(\ell )\plus{\delta}(\ell)^{-1/2}   \plus{J}h_1 (g \ell^{-1}   )\,   \dd \ell    .
\end{equation}
But this    is precisely the value at $g\in G$ of the convolution $\plus{J}h_1\cdot f$, and since 
\[
\plus{J} h_1 = \plus {J} \plus{I} h = h,
\]
we obtain in \eqref{eq-first-triangle-for-bernstein6} the value at $g\in G$ of $ h \cdot f$, as required in \eqref{eq-first-triangle-relation}.

A similar computation, using the formula for $B$ in terms of $\op{I}$, as in item (b) of Theorem~\ref{I_theorem}, handles \eqref{eq-second-triangle-relation}.

\section{Fourier transforms and intertwiners}
\label{appendix-fourier}

In this final section we shall give a proof of Theorem~\ref{I_theorem} (which summarized the properties of the intertwining integrals $\pnm{J}$ that were needed to prove the second adjoint theorem).  The arguments   rely on substantial results  from  Harish-Chandra's theory of the Plancherel formula, and are rather technical (we shall not attempt a conceptual approach here). But once  again the fact that we are concentrating   on the group $G=\SL(2,\R)$  helps  simplify matters.  

The general idea is to analyze the intertwining integrals $\pnm{J}$ using the   Fourier transform.   Denote by ${N}\subseteq G$ either one of the subgroups $\pnm{N}$, and let $\mathfrak a$ be the Lie algebra of the positive diagonal subgroup $A\subseteq G$. 
For $h\in \HC(G/N)$ and $\mu\in \mathfrak a^*$ define 
\begin{equation}
\label{eq-fourier-def}
\widehat h (\mu) = \int _A (h{\cdot}a ) \, a^{-i\mu} \, \dd a ,
\end{equation}
 where  $a^{-i\mu}$ is shorthand for $e^{-i \mu(\log (a))}$.  The values of the integrand are smooth functions on $G/N$, and the integral converges absolutely  in  the Fr\'echet space $C^\infty (G/N)$ by virtue of the definition of the space $\HC(G/N)$.     Moreover 
 \[
 \widehat h (\mu) \in V(\mu) ,
 \]
 where 
\begin{equation}
\label{eqn-V-mu-def}
  V(\mu) = \left\{\phi\in C^\infty(G)\ |\ \phi(gan) = a^{-i\mu}  \delta(a)^{-1/2}  \phi(g) \right\}  .
\end{equation}
The Fr\'echet space $V(\mu)$ is a  parabolically induced representation of $G$; in classical terms it is a direct sum of the odd and the even principal series representation associated to $\mu$; thus
\begin{equation}
\label{eqn-V-mu-def2}
  V(\mu) = V(\mu)_{\even} \oplus V(\mu)_{\odd},
\end{equation}
 where the even and odd summands are characterized by the additional conditions 
 \[
 \phi(gz) = + \phi (g) \quad \text{and} \quad \phi (gz)  = - \phi (g),
 \]
 respectively, involving the element 
$
 z = \left [ \begin{smallmatrix} -1 & 0 \\ 0 & -1 \end{smallmatrix}\right ]$. 
 
Thanks to the Iwasawa decomposition the space $V(\mu)$ identifies with $\Sc(K)$ by restriction  of functions from $G$ to $K$.  After making this identification,  the Fourier transform defines a topological isomorphism 
\begin{equation}
\label{eqn-fourier-isomorphism}
\Fourier \colon \HC(G/N) \stackrel \cong \longrightarrow \Sc (\mathfrak a^*, \Sc(K)).
\end{equation}

In order to analyze the intertwining integrals 
\[
\pnm{J} \colon \HC(G/\mnp{N}) \longrightarrow C^\infty (G/\pnm{N})
\]
from the point of view of the Fourier transform, we need to know a bit more about functions in the range of $\pnm J$, so as to be able to apply the Fourier transform after $\pnm J$.   

\begin{lemma}
\label{lem-Jh-nearly-HC}
If $h\in \HC(G/\mnp{N})$, then 
\[
\bigl| (\pnm{J}h)(x) \bigr| \le \text{\rm constant}_h \cdot  \pnm{\delta}(x)^{-1/2} 
\]
for all $x\in G/\pnm{N}$.
\end{lemma}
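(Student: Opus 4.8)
The statement is a pointwise bound on $\pnm{J}h$, so the plan is to reduce to the $\SL(2,\R)$ matrix computation that was already carried out in the proof of Lemma~\ref{lem-J-integral-convergence} and then extract the decay in $\pnm{\delta}$ that is uniform in~$h$. First I would reduce to the case of $\pnm{J} = \plus{J}$ acting on $h\in\HC(G/\op{N})$ (the other case being identical after conjugating by the Weyl element), and, using left $K$-invariance of the quantities involved and the Iwasawa decomposition $G = KA\plus{N}$, reduce to evaluating $(\plus{J}h)(a)$ for $a = \diag(e^t,e^{-t})\in A$, noting that $\plus{\delta}(a)^{-1/2} = e^{-t}$.

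Next I would write the defining integral $(\plus{J}h)(a) = \int_{\plus{N}} h(a\plus{n})\,\dd n$ with $\plus{n} = \left[\begin{smallmatrix}1&x\\0&1\end{smallmatrix}\right]$, and invoke the Iwasawa computation \eqref{eq-iwasawa-matrix-calc}: the $A$-part $a_1$ of $a\plus{n}$ satisfies $\|a_1\|^2 = (e^{-2t} + x^2 e^{2t})^{1/2}$ in the relevant matrix entry, so that $\op{\delta}(a_1)^{-1/2} = (e^{-2t}+x^2e^{2t})^{-1/2}$ when $h$ is regarded as a function on $G/\op{N}$. Combining this with the defining seminorm estimate for $\HC(G/\op{N})$ (taking the invariant differential operators $X,Y$ to be $1$ and choosing $p\geq 2$) gives, as in \eqref{eq-J-estimate},
\[
|h(a\plus{n})| \le \mathrm{constant}_{h}\cdot (e^{-2t}+x^2e^{2t})^{-1/2}\bigl(1+\tfrac12\log(e^{-2t}+x^2e^{2t})\bigr)^{-p}
\]
for $|x|$ large, with a harmless bound on the bounded region $|x|\le 1$. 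Integrating in $x$, I substitute $x = e^{-2t}y$ (equivalently scale out $e^{-t}$): then $e^{-2t}+x^2e^{2t} = e^{-2t}(1+y^2)$, the integrand becomes $e^{-t}(1+y^2)^{-1/2}(1 + \tfrac12\log(e^{-2t}(1+y^2)))^{-p}$, and $\dd x = e^{-2t}\,\dd y$ — wait, one must be a little careful here, so the cleaner substitution is $x e^{2t} = e^{-2t}\cdot(xe^{4t})$; in any case the scaling that makes $e^{-2t}+x^2e^{2t}$ factor as $e^{-2t}\cdot(1+u^2)$ is $x = e^{-2t}u$, giving $\dd x = e^{-2t}\,\dd u$ and a prefactor $e^{-t}\cdot e^{-2t}$ from the two $(\cdot)^{-1/2}$ and $\dd x$ — which is \emph{too much} decay, so in fact the correct substitution is $x = u$ with no scaling when $t\ge 0$, and one simply bounds $(e^{-2t}+x^2e^{2t})^{-1/2}\le e^{-t}\min\{e^{2t}, x^{-1}e^{-2t}\}$... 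The point I want to land on is: after the change of variables that diagonalizes the quadratic form, $\int_{\plus{N}} |h(a\plus{n})|\,\dd n \le \mathrm{constant}_h \cdot e^{-t}\int_{\R}(1+u^2)^{-1/2}(1+|\log(1+u^2)| + 2|t|)^{-p}\,\dd u$, and for $p\ge 2$ the remaining integral is bounded by a constant independent of $t$ (the $(1+u^2)^{-1/2}$ alone is not integrable, but together with even a single logarithmic factor with the $2|t|$ only helping, it is; concretely split $|u|\le e^{2|t|}$ and $|u|>e^{2|t|}$).

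\textbf{Main obstacle.} The only real subtlety — and the step I would be most careful about — is the bookkeeping in the last integral: the factor $(e^{-2t}+x^2e^{2t})^{-1/2}$ by itself gives exactly $e^{-t}$ times a \emph{logarithmically divergent} $x$-integral, so the uniform-in-$t$ bound genuinely requires using the logarithmic seminorm factor (i.e.\ taking $p\ge 2$ rather than $p\ge 1$), and one must check the $x$ near $0$ contribution (where the earlier Lemma only claimed the estimate for large $x$) separately — there $h(a\plus{n})$ is controlled directly by $\|h\|_\infty$-type seminorms and contributes at most $\mathrm{constant}_h\cdot e^{-t}$ after trivially bounding $(e^{-2t}+x^2e^{2t})^{-1/2}\le e^{t}$ on $|x|\le 1$ — hmm, that gives only $O(1)$, not $O(e^{-t})$; so near $x=0$ one instead notes $|h(a\plus{n})|\le\mathrm{constant}_h\cdot(e^{-2t}+x^2e^{2t})^{-1/2}$ without the log and integrates: $\int_{|x|\le 1}(e^{-2t}+x^2e^{2t})^{-1/2}\dd x = e^{-t}\int_{|x|\le 1}(1+x^2e^{4t})^{-1/2}\dd x\cdot e^{-t}$, which after $v = xe^{2t}$ is $e^{-t}\int_{|v|\le e^{2t}}(1+v^2)^{-1/2}e^{-2t}\dd v\le e^{-t}\cdot e^{-2t}\cdot O(t) $, more than enough. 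Assembling the two regions gives $\|(\plus{J}h)\|$ at the point $a$ bounded by $\mathrm{constant}_h\cdot e^{-t} = \mathrm{constant}_h\cdot\plus{\delta}(a)^{-1/2}$, and by left $K$-invariance this is the claimed bound for all $x\in G/\plus{N}$. The case of $\op{J}$ on $\HC(G/\plus{N})$ follows verbatim with $t$ replaced by $-t$.
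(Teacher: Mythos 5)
Your overall plan is the one the paper intends: the paper's proof of this lemma is just a one-line reference to \eqref{eq-J-estimate}, and what you do --- reduce to $g=a\in A$ by left $K$-invariance, rescale the $\plus{N}$-variable by $e^{2t}$ to pull out $e^{-t}=\plus{\delta}(a)^{-1/2}$, and observe that the logarithmic seminorm factor is essential because $(1+u^2)^{-1/2}$ alone is not integrable --- is exactly the integration of that estimate. So in spirit this is the paper's argument, correctly identified.

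But the execution has algebraic errors, and one of them hides a real issue. After the substitution $u=xe^{2t}$ the log factor coming from $\log\|a_1\|=\tfrac12\bigl|\log(e^{-2t}+x^2e^{2t})\bigr|$ should read $\bigl(1+\tfrac12|\log(1+u^2)-2t|\bigr)^{-p}$, not $\bigl(1+|\log(1+u^2)|+2|t|\bigr)^{-p}$; the two are not comparable in the direction you need (near $u$ with $\log(1+u^2)\approx 2t$ your expression is much smaller than the true one). The correct integral $\int_{\R}(1+u^2)^{-1/2}\bigl(1+\tfrac12|\log(1+u^2)-2t|\bigr)^{-p}\,\dd u$ is still bounded uniformly in $t$ for $p>1$: substitute $v=\log(1+u^2)$, note the Jacobian $(1-e^{-v})^{-1/2}$ is integrable near $0$ and bounded for $v\geq 1$, and shift $v\mapsto v+2t$ in the tail. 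More seriously, your handling of $|x|\le 1$ drops the log factor and bounds $\int_{|x|\le 1}(e^{-2t}+x^2e^{2t})^{-1/2}\,\dd x$, which (after $v=xe^{2t}$) equals $e^{-t}\int_{|v|\le e^{2t}}(1+v^2)^{-1/2}\,\dd v\sim 4t\,e^{-t}$, \emph{not} $e^{-3t}\cdot O(t)$ --- the sign errors in your string of equalities introduced a spurious $e^{-2t}$. Since $t\,e^{-t}\not\le C\,e^{-t}$, the crude bound without the log factor does not suffice; you must keep the log factor for small $x$ as well, using the absolute-value form of \eqref{eq-J-estimate}, after which a single calculation (the one just sketched) handles the whole line and yields the uniform bound $\mathrm{const}_h\cdot e^{-t}$.
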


\begin{proof}
This follows   from the estimate \eqref{eq-J-estimate} on the integrand defining $\pnm{J}h$.
\end{proof}

\begin{lemma}
\label{lem-half-very-rapid-decay}
If  $h\in \Sc(G/\op{N})$, then 
\[
 \bigl | (\plus{J}h)(x ) \bigr |  \le \text{\rm constant}_{h,m} \cdot  \plus\delta(x)^{m/2}
\]
for all $x\in G/\plus N$ and all $m\ge 1$.
\end{lemma}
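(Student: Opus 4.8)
The plan is to rewrite $\plus{J}h$ as an integral of $h$ over a straight line that recedes to infinity, and then to use the rapid decay of Schwartz functions. Following Remark~\ref{rem-GmodN-space} and Example~\ref{Xi_GmodN_example}, I would identify $G/\op{N}$ with $\R^2\setminus\{0\}$ by sending $g\op{N}$ to the second column $w=ge_2$ of $g$, and $G/\plus{N}$ with $\R^2\setminus\{0\}$ by sending $g\plus{N}$ to the first column $v=ge_1$; under these identifications $\Sc(G/\op{N})$ is the space of Schwartz functions on $\R^2$ vanishing to infinite order at the origin, while $\plus\delta(x)=|v|^{2}$ for $x=g\plus{N}$. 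Taking $\plus{n}=\left[\begin{smallmatrix} 1 & s \\ 0 & 1\end{smallmatrix}\right]$, the second column of $g\plus{n}$ is $w+sv$, so
\[
(\plus{J}h)(g)=\int_{\R} h(w+sv)\,\dd s .
\]
Because $\det g=1$ we have $\langle v^{\perp},w\rangle=1$, with $v^{\perp}$ the vector $v$ rotated through a right angle; hence $w+sv$ runs over the line $\ell_v=\{\,p\in\R^2:\langle v^{\perp},p\rangle=1\,\}$, which sits at Euclidean distance $|v|^{-1}$ from the origin and does not depend on $w$ — confirming that $\plus{J}h$ descends to a function of $x=g\plus{N}$, i.e.\ of $v$. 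Writing the same integral in terms of the arclength parameter $s$ along $\ell_v$, measured from the point of $\ell_v$ nearest the origin, so that $|p(s)|^{2}=|v|^{-2}+s^{2}$, one gets
\[
(\plus{J}h)(x)=\frac1{|v|}\int_{\R} h\bigl(p(s)\bigr)\,\dd s .
\]

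Next I would feed in the decay of $h$. Being a Schwartz function on $\R^2$, for each $N\ge 0$ it obeys a bound $|h(p)|\le C_N|p|^{-N}$ for all $p\ne 0$, with $C_N$ a continuous seminorm of $h$. Inserting this and then substituting $s=u/|v|$,
\[
\bigl|(\plus{J}h)(x)\bigr|\le\frac{C_N}{|v|}\int_{\R}\bigl(|v|^{-2}+s^{2}\bigr)^{-N/2}\,\dd s=\frac{C_N}{|v|}\,|v|^{\,N-1}\!\!\int_{\R}(1+u^{2})^{-N/2}\,\dd u=C_N\kappa_N\,|v|^{\,N-2},
\]
where $\kappa_N=\int_{\R}(1+u^{2})^{-N/2}\,\dd u$ is finite once $N\ge 2$ (which also shows the integral defining $\plus{J}h$ converges absolutely). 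Given $m\ge 1$, taking $N=m+2$ gives $\bigl|(\plus{J}h)(x)\bigr|\le C_{m+2}\kappa_{m+2}\,|v|^{\,m}=C_{m+2}\kappa_{m+2}\,\plus\delta(x)^{m/2}$, which is the assertion.

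I do not expect a genuine obstacle: the only care needed is the bookkeeping of the two identifications with $\R^2\setminus\{0\}$ and, with them, the normalisation $\plus\delta(x)=|v|^{2}$. The one point worth flagging is that, in contrast with Lemma~\ref{lem-Jh-nearly-HC}, whose proof needs only the crude bound $|h|\le\mathrm{const}\cdot\Xi_{G/\op{N}}$, the improvement here to an \emph{arbitrary} power of $\plus\delta(x)^{1/2}$ uses nothing about $h$ near the origin and everything about its rapid decay at infinity: as $x$ moves in the direction along which $\plus\delta(x)^{1/2}\to 0$, the line $\ell_v$ recedes from the origin at rate $|v|^{-1}$, so $h$ is uniformly small along it. (Alternatively one can stay on the group: upgrade the $\Xi_{G/\op{N}}$ factor in the estimate \eqref{eq-J-estimate} behind Lemma~\ref{lem-Jh-nearly-HC} to an arbitrary power of itself, using that $h\in\Sc(G/\op{N})$, and then integrate over $\plus{N}$ as in the proof of Lemma~\ref{lem-J-integral-convergence} — but that route forces a separate treatment of the ranges $e^{-2t}+x^{2}e^{2t}\le 1$ and $\ge 1$, which the planar picture dispatches at once.)
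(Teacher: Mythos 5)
Your proposal is correct and is essentially the paper's proof, phrased in planar rather than Iwasawa coordinates: both identify $G/\op{N}$ with the punctured plane via the second column, observe that the $\plus{N}$-integral traces out a line at distance $\plus\delta(x)^{-1/2}$ from the origin, and integrate the Schwartz bound on $h$ along that line to obtain the stated estimate. (One small inaccuracy in your closing aside: the group-side route does not require a case split on the sign of $t$, since the single bound $\int_{\R}(1+e^{-2t}+y^2e^{2t})^{-m}\,\dd y \le \kappa_m\,e^{(2m-2)t}$ holds for all $t$; this does not affect your argument.)
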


\begin{proof}
Identify $G/\op{N}$ with the complement of the origin in the plane $\R^2$, as in   Remark~\ref{rem-GmodN-space}, where we noted that if  $h\in \Sc(G/\op{N})$, then for every $m\ge 0$ we have an estimate 
\[
| h(x)| \le  \text{\rm constant}_{h, m}  \cdot ( 1 + \| x\|^2 )^{-m}
\]
(along with further estimates at $x=0$ that won't concern us).  So if  
\[
a = \begin{bmatrix} e^t & 0 \\ 0 & e^{-t}\end{bmatrix}
\quad 
\text{and}
\quad 
\plus{n} = \begin{bmatrix} 1 & y \\ 0 & 1\end{bmatrix},
\]
then
\begin{equation*}
 \bigl |  h(ka\plus{n} ) \bigr | 
 \le 
 \text{\rm constant}_{h, m} \cdot (1+ y^2 e^{2t}+ e^{-2t} )^{-m} .
 \end{equation*}
Integrating the bound with respect to $y$, we find that 
\begin{equation*}
 \bigl | (\plus{J}h)(ka\plus{n} ) \bigr |  
   \le     \text{constant}_{h,m} \cdot \int _{-\infty}^\infty  (1+ e^{-2t} + y^2 e^{2t})^{-m}  \, \dd y  .
   \end{equation*}
   The integral is bounded by a constant depending on $m$, only, times  $e^{(2m-2)t}$, and the result follows.
\end{proof}
 
The lemma implies that if $h\in \Sc(G/\op{N})$, then the Fourier transform \eqref{eq-fourier-def} of $\plus{J} h$ (which exists in a    distributional sense thanks to Lemma~\ref{lem-Jh-nearly-HC}) extends   to a holomorphic function on the half-plane consisting of  those $\mu \in \mathfrak a ^*_\C$ with \emph{positive imaginary part} in the sense that 
\begin{equation}
\label{eq-half-plane2}
  \operatorname{Im}   \mu \left ( \left [ \begin{smallmatrix} 1 & 0 \\ 0 & -1 \end{smallmatrix} \right ] \right )   > 0 ,
\end{equation}
or equivalently 
\begin{equation}
\label{eq-half-plane}
\plus\delta(a)>1  \quad \Rightarrow \quad \left | a^{-i \mu}\right |  >  1 .
\end{equation}
Here are the details.  Denote by $\pnm{V}(\mu)$   the two versions of the $\SF$-modules $V(\mu)$ from \eqref{eqn-V-mu-def} associated to the two unipotent subgroups $\pnm{N}$.   

\begin{lemma}
Assume that  $  \operatorname{Im}   \mu  > 0$. If $h\in \Sc(G/\op{N})$, and if   $g\in G $, then the integral 
 \[
\widehat{\plus{J}h}(\mu)(g)  =  \int _A (\plus{J}h{\cdot}a )(g) \, a^{-i\mu} \, \dd a \]
 converges absolutely  and defines a holomorphic function of $\mu$. For a fixed $\mu$ with positive imaginary part, the same integral defines  an element of $\plus{V}(\mu)$.
 \end{lemma}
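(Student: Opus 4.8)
The plan is to estimate the integrand pointwise, deduce absolute and indeed $C^\infty(G)$-convergence, obtain holomorphy for free from those estimates, and then read off membership in $\plus{V}(\mu)$ by a change of variables in the $A$-integral. First I would write the right $A$-action out explicitly,
\[
\widehat{\plus{J}h}(\mu)(g) = \int_A \plus{\delta}(a)^{-1/2}\, (\plus{J}h)(ga^{-1})\, a^{-i\mu}\,\dd a ,
\]
and record the two elementary facts to be used: $\plus{\delta}(ga^{-1}) = \plus{\delta}(g)\plus{\delta}(a)^{-1}$ for $a\in A$ (as in the proof of Lemma~\ref{Xi_GmodN_spherical_lemma}), and, parametrising $A$ by $a = \left[\begin{smallmatrix} e^t & 0 \\ 0 & e^{-t} \end{smallmatrix}\right]$ so that $\plus{\delta}(a) = e^{2t}$ and $\dd a = \dd t$ up to a constant, the identity $|a^{-i\mu}| = \plus{\delta}(a)^{\rho/2}$, where $\rho = \operatorname{Im}\mu\!\left(\left[\begin{smallmatrix} 1 & 0 \\ 0 & -1 \end{smallmatrix}\right]\right) > 0$ is precisely the positivity hypothesis (compare \eqref{eq-half-plane}).

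The real work is the convergence estimate, which I would carry out by splitting the $A$-integral at $\plus{\delta}(a) = 1$. On the region $\plus{\delta}(a)\ge 1$, where $\plus{\delta}(ga^{-1}) = \plus{\delta}(g)\plus{\delta}(a)^{-1}$ is small for fixed $g$, I would invoke Lemma~\ref{lem-half-very-rapid-decay} --- this is the one point where it is essential that $h$ be Schwartz rather than merely tempered --- to bound $|(\plus{J}h)(ga^{-1})|$ by a constant (depending on $g$ and $m$) times $\plus{\delta}(a)^{-m/2}$ for every $m\ge 1$; taking $m > \rho-1$, the integrand is bounded by a constant times $\plus{\delta}(a)^{(\rho-1-m)/2}$, which is integrable over that region. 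On the region $\plus{\delta}(a)\le 1$, where $\plus{\delta}(ga^{-1})$ is large, I would instead use the Harish-Chandra-type bound of Lemma~\ref{lem-Jh-nearly-HC}, giving $|(\plus{J}h)(ga^{-1})| \le C_g\,\plus{\delta}(a)^{1/2}$, so that the integrand is bounded by $C_g\,\plus{\delta}(a)^{\rho/2}$, which is integrable exactly because $\rho > 0$. Both bounds hold uniformly for $g$ in a fixed compact set and $\mu$ in a fixed compact subset of the half-plane. To upgrade this to convergence in the Fréchet space $C^\infty(G)$ --- which is what will give smoothness of the limit --- I would observe that the topology of $C^\infty(G)$ is generated by the seminorms $f\mapsto\sup_C|Xf|$ with $X$ a right-invariant differential operator, that $\plus{J}$ commutes with left translations so that $X(\plus{J}h) = \plus{J}(Xh)$, and that $Xh$ is again a Schwartz function on $G/\op{N}$; thus the estimates just made apply verbatim with $h$ replaced by $Xh$, and no $\Ad(a)$-factors intervene.

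Holomorphy in $\mu$ is then automatic: the integrand is entire in $\mu$ for each fixed $g$ and $a$, and the domination just obtained is locally uniform in $\mu$, so Morera's theorem together with Fubini's theorem (equivalently, differentiation under the integral) shows that $\mu\mapsto\widehat{\plus{J}h}(\mu)$ is holomorphic into $C^\infty(G)$ on the half-plane. Finally, for fixed $\mu$, membership in $\plus{V}(\mu)$ amounts to smoothness --- already obtained --- together with the transformation law $\phi(gan) = a^{-i\mu}\plus{\delta}(a)^{-1/2}\phi(g)$ of \eqref{eqn-V-mu-def}; I would verify the latter exactly as one checks $\widehat{h}(\mu)\in V(\mu)$ for $h\in\HC(G/N)$: right $\plus{N}$-invariance of $\plus{J}h$ (together with the fact that $A$ normalises $\plus{N}$) disposes of the factor $n$, and the substitution $a = a_0 b$ in the $A$-integral produces the factor $a_0^{-i\mu}\plus{\delta}(a_0)^{-1/2}$, using that both $\plus{\delta}$ and $a\mapsto a^{-i\mu}$ are homomorphisms on the abelian group $A$. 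The step I expect to be the main obstacle is the convergence estimate: one must recognise that the $\plus{\delta}(a)\to\infty$ tail is controlled only by the rapid decay of $\plus{J}h$ available for Schwartz $h$ --- no amount of positivity of $\operatorname{Im}\mu$ helps there --- whereas the $\plus{\delta}(a)\to 0$ tail is precisely where the hypothesis $\operatorname{Im}\mu>0$ is consumed. The remaining bookkeeping, in particular the passage to convergence in $C^\infty(G)$, is routine once differentiation is taken through $\plus{J}$ by means of right-invariant operators.
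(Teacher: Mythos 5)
Your argument is correct and follows exactly the paper's proof: write out the integral, split at $\plus{\delta}(a)=1$, control the $\plus{\delta}(a)\ge 1$ tail with Lemma~\ref{lem-half-very-rapid-decay} and the $\plus{\delta}(a)\le 1$ tail with Lemma~\ref{lem-Jh-nearly-HC} together with the hypothesis $\operatorname{Im}\mu>0$, then get smoothness by differentiating under the integral and membership in $\plus{V}(\mu)$ by a change of variables. You have merely supplied more detail than the paper does, in particular the explicit $|a^{-i\mu}|=\plus{\delta}(a)^{\rho/2}$ identity, the use of right-invariant operators so that $X\circ\plus{J}=\plus{J}\circ X$ for the $C^\infty(G)$-convergence, and the Morera/Fubini step for holomorphy.
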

 
\begin{proof} 
According to the definitions, 
 \[
  \widehat{\plus{J}h}(\mu)(x)  
	 = \int_A (\plus{J}h) (xa^{-1}) \plus\delta(a)^{-1/2} a^{-i\mu} \, \dd a .
\]

 The integral converges in the region  $\plus\delta(a)>1$ thanks to Lemma~\ref{lem-half-very-rapid-decay}, since the very rapid decay of $a\mapsto (\plus{J}h) (xa^{-1}) $ as $\plus \delta(a)\to + \infty$ compensates for the exponential growth of $ a^{-i\mu}$.  The integral  converges    in the region $\plus\delta(a)< 1$ thanks to Lemma~\ref{lem-Jh-nearly-HC}, which tells us that $(\plus{J}h) (xa^{-1}) \plus\delta( a)^{-1/2} $ is bounded,  while by  \eqref{eq-half-plane2} the term $a^{-i\mu}$ decays exponentially as $\plus\delta(a)\to -\infty$.  Smoothness in $g\in G$ follows, as usual, by differentiating under the integral sign, and the fact that $\plus{J}h\in \plus{V}(\mu)$ follows from a change of variables.
\end{proof}

In addition, we can apply the Fourier transform first, and then apply $\plus{J}$:

\begin{lemma}
If $\operatorname{Im}(\mu)> 0$ as in \eqref{eq-half-plane2}, and if $h\in \op{V}(\mu) $, then the integral 
\[
\plus{J}h (g) = \int _{\plus{N}} h(gn)\,\dd n
\]
converges absolutely and defines a   morphism
\[
\plus{J}(\mu) \colon \op{V}(\mu) \longrightarrow \plus{V}(\mu)
\]
of $\SF$-modules over $\Sc(G)$.
\end{lemma}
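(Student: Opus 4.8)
The plan is to re-run the Iwasawa-coordinate estimate behind Lemma~\ref{lem-J-integral-convergence}, feeding in the \emph{exact} transformation law of a vector in $\op V(\mu)$ in place of the growth estimate valid on $\HC(G/\op N)$. For $n\in\plus N$ write $gn$ in its Iwasawa form $gn=\kappa\,a\,\nu$ relative to $G=KA\op N$; since $h\in\op V(\mu)$,
\[
|h(gn)| = |a^{-i\mu}|\,\op\delta(a)^{-1/2}\,|h(\kappa)| \le \Bigl(\sup_{k\in K}|h(k)|\Bigr)\,|a^{-i\mu}|\,\op\delta(a)^{-1/2},
\]
the restriction of $h$ to the compact group $K$ being bounded. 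Writing $g=k_0a_0\plus n_0$ by the decomposition $G=KA\plus N$ and absorbing $\plus n_0$ into a translation of the integration variable reduces everything to bounding $\int_{\plus N}|h(k_0a_0n)|\,\dd n$; and here the matrix identity \eqref{eq-iwasawa-matrix-calc} exhibits the $A$-part $a$ of $a_0n$ explicitly, so that $\op\delta(a)^{-1/2}$ is the factor $(e^{-2t}+x^2e^{2t})^{-1/2}$ of \eqref{eq-J-estimate}, while by \eqref{eq-half-plane} the factor $|a^{-i\mu}|$ is a further power of $(e^{-2t}+x^2e^{2t})$ whose exponent is negative precisely when $\mu$ satisfies \eqref{eq-half-plane2}. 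A one-line computation then shows the product is an integrable function of $x$ exactly under that hypothesis. This gives absolute convergence of $\plus Jh(g)$, together with a bound $|\plus Jh(g)|\le C_g\sup_K|h|$ in which $C_g$ is bounded on compact subsets of $G$ and is in fact a fixed constant $C_0$ as $g$ ranges over $K$.

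Next I would verify that $\plus Jh\in\plus V(\mu)$. Right $\plus N$-invariance of the integral is automatic, and the $A$-equivariance
\[
\plus Jh(ga)=a^{-i\mu}\plus\delta(a)^{-1/2}\plus Jh(g)
\]
follows from the change of variables $n\mapsto a^{-1}na$ in $\plus N$, using \eqref{delta_equation} and the right $A$-equivariance of $h$, once one notes that the factor $\plus\delta(a)^{-1}$ produced by \eqref{delta_equation} combines with the $\op\delta(a)^{-1/2}$ coming from $h$ to give exactly $\plus\delta(a)^{-1/2}$ (because $\op\delta(a)=\plus\delta(a)^{-1}$ on $A$). Running the same substitution with $a$ replaced by the central element $z$ shows in addition that $\plus J$ respects the even/odd decomposition \eqref{eqn-V-mu-def2}. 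Together with the smoothness established below, this places $\plus Jh$ in $\plus V(\mu)$.

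For smoothness and continuity, the trick is to differentiate only on the \emph{left}. A right-invariant vector field $R_X$ passes through the integral over $\plus N$ — the required uniform domination being furnished by the estimate of the first paragraph applied to $R_Xh$ — so $R_{X_1}\!\cdots R_{X_j}\plus Jh=\plus J\bigl(R_{X_1}\!\cdots R_{X_j}h\bigr)$; since $\op V(\mu)$ is a smooth $G$-module, each $R_{X_1}\!\cdots R_{X_j}h$ again lies in $\op V(\mu)$, and by dominated convergence the right-hand side is continuous in $g$. As the right-invariant vector fields span every tangent space, this shows $\plus Jh\in C^\infty(G)$, hence $\plus Jh\in\plus V(\mu)$. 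Evaluating these identities on $K$, where the constant $C_0$ is uniform, shows moreover that $\plus J(\mu)\colon\Sc(K)\to\Sc(K)$ is continuous for the standard seminorms, which on the compact group $K$ may be computed using right-invariant operators. (Differentiating on the \emph{right} would instead introduce $\Ad(n^{-1})X$, with its quadratic growth in the parameter of $n\in\plus N$, which would spoil the estimate for $\mu$ near the boundary of the half-plane; differentiating on the left avoids this entirely.)

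Finally, $\plus J$ visibly commutes with left translation, hence with the $G$-action on $\op V(\mu)$ and $\plus V(\mu)$; since both of these are $\SF$-modules of moderate growth, the $\Sc(G)$-action on them is given by the vector-valued integral $f\cdot\phi=\int_Gf(g)\,(g{\cdot}\phi)\,\dd g$ of Proposition~\ref{prop-moderate-growth}, and a continuous $G$-equivariant linear map automatically commutes with such integrals. Thus $\plus J(\mu)$ is continuous and $\Sc(G)$-linear, i.e.\ a morphism of $\SF$-modules over $\Sc(G)$, as asserted. The one genuinely substantive step is the convergence estimate of the first paragraph — getting the region of convergence to match \eqref{eq-half-plane2} on the nose — which is, however, essentially the computation already performed for Lemma~\ref{lem-J-integral-convergence}; the rest is bookkeeping, the only pitfall being to differentiate on the left rather than the right.
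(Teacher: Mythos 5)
Your proof is correct and fleshes out the approach the paper invokes tacitly: the paper's own proof is a one-line reference to the argument of Lemma~\ref{lem-half-very-rapid-decay} (Iwasawa coordinates, bound the integrand, integrate), and you carry out precisely that computation, substituting the exact transformation law of a vector in $\op V(\mu)$ for the Schwartz decay used there. The exponent bookkeeping is right: with $g=ka$ and $gn=k_1a_1\op n$, the factor $\op\delta(a_1)^{-1/2}$ contributes $(e^{-2t}+x^2e^{2t})^{-1/2}$ and $|a_1^{-i\mu}|$ contributes $(e^{-2t}+x^2e^{2t})^{-\operatorname{Im}(\mu)/2}$, so integrability in $x$ holds exactly when $\operatorname{Im}\mu>0$. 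Your verification of the $A$-equivariance via $\op\delta(a)=\plus\delta(a)^{-1}$ and the Jacobian from \eqref{delta_equation} is also correct, as is the reduction of $\Sc(G)$-linearity to $G$-equivariance of a continuous map via Proposition~\ref{prop-moderate-growth}. The remark about differentiating with right-invariant vector fields rather than left-invariant ones (to avoid $\Ad(n^{-1})X$ spoiling the decay) is a genuine technical point that the paper glosses over; it is a worthwhile addition. In short: same route as the paper, with the details the paper omits spelled out carefully.
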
 

\begin{proof} 
The proof is essentially the same as the proof  of Lemma \ref{lem-half-very-rapid-decay}.
\end{proof}
 
The convergence of the integrals involved implies that  if $h\in \Sc(G/\op{N})$, and if $\operatorname{Im}(\mu)> 0$, then 
\begin{equation}
\label{eq-pointwise-J-def}
\widehat { \plus{J} h} (\mu) = \plus{J}(\mu) \widehat h (\mu).
\end{equation}
 This is the formula we are seeking, since we can now appeal to the known, detailed formulas for $\plus{J}(\mu)$ to proceed.

\begin{remark}
There is of course an exactly similar sequence of results for the other intertwining integral,
\[
\op{J} \colon \HC(G/\plus{N})\longrightarrow  C^\infty (G/\op{N}),
\]
involving now $\mu \in \mathfrak a^* _\C$ with $\operatorname{Im} \mu < 0$.  We shall use these below.
\end{remark}

Denote by $\sigma_j\colon K \to U(1)$  the  continuous character defined by 
\[
\sigma_j \colon \left [\begin{smallmatrix} \cos(u)\,\, &  -\sin(u) \\ \sin (u)\,\, & \phantom{-}\cos(u)\end{smallmatrix}\right ] \longmapsto e^{\sqrt{-1}ju}.
\]
In the following proposition we identify $\pnm{V}(\mu)$ with $\Sc(K)$ by restriction of functions from $G$ to $K$.  Denote by 
\[
\Sc_j(K) \subseteq \Sc(K)
\]
the ({one-dimensional}) $\sigma_j$-isotypical subspace under the left translation action of $K$.  

\begin{remark}
Note that the decompositions of $\pnm{V}(\mu)$ into principal series representations corresponds to the decomposition 
\[
\Sc(K) = \Sc(K)_{\even} \oplus \Sc(K)_{\odd},
\]
where $ \Sc(K)_{\even/\odd}$ is the closed span of all $\Sc_j(K)$ with $j$ even/odd. 
\end{remark}

We shall also  identify $\mathfrak a^* _\C$ with $\C$ by evaluation on $  \left [ \begin{smallmatrix} 1 & 0 \\ 0 & -1 \end{smallmatrix} \right ] $.

\begin{theorem}\label{c_theorem0}
There are \textup{(}unique\textup{)} meromorphic functions 
\[
\pnm {c}{}^{(j)}: \germ a^*_{\C}  \longrightarrow \C
\] 
such that  the operator
\[
\plus{J}(\mu) \colon \Sc_j (K) \longrightarrow  \Sc_j (K)
\]
acts as multiplication by $\plus {c}{}^{(j)}(\mu)$ whenever $\operatorname{Im}\mu >0$, while the operator 
\[
\op{J}(\mu) \colon \Sc_j (K) \longrightarrow  \Sc_j (K)
\]
acts as multiplication by $\op {c}{}^{(j)}(\mu)$ whenever $\operatorname{Im}\mu <0$. \end{theorem}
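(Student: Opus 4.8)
The plan is to get the scalarity of the operators $\pnm{J}(\mu)$ on each $K$-type out of $G$-equivariance, and then to read off the meromorphy of the resulting scalars from a classical integral. First I would record that $\plus{J}(\mu)\colon\op{V}(\mu)\to\plus{V}(\mu)$ and $\op{J}(\mu)\colon\plus{V}(\mu)\to\op{V}(\mu)$, being morphisms of $\SF$-modules over $\Sc(G)$, are $G$-equivariant (by Proposition~\ref{prop-moderate-growth} the $G$-action on such a module is determined by the $\Sc(G)$-action, and a continuous $\Sc(G)$-module map is then $G$-equivariant by approximating group elements with an approximate identity), hence in particular $K$-equivariant. Under the identifications of $\op{V}(\mu)$ and $\plus{V}(\mu)$ with $\Sc(K)$ by restriction of functions from $G$ to $K$ — which use the same Iwasawa $G=KA\op{N}=KA\plus{N}$ — the $K$-action on \emph{both} models is the left regular representation of $K$ on $\Sc(K)$. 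So for $\operatorname{Im}\mu>0$ the operator $\plus{J}(\mu)\colon\Sc(K)\to\Sc(K)$ commutes with left translation, hence preserves each isotypical subspace $\Sc_j(K)$; since $K\cong U(1)$ is abelian and $\Sc_j(K)$ is one-dimensional, $\plus{J}(\mu)$ acts on $\Sc_j(K)$ as multiplication by a scalar $\plus{c}{}^{(j)}(\mu)$, and likewise $\op{J}(\mu)$ acts by a scalar $\op{c}{}^{(j)}(\mu)$ on $\Sc_j(K)$ for $\operatorname{Im}\mu<0$. Because $\germ a^*_\C\cong\C$ is connected, a meromorphic function agreeing with $\mu\mapsto\pnm{c}{}^{(j)}(\mu)$ on the relevant half-plane is unique; so only the existence of such an extension remains.

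To produce it I would compute $\plus{c}{}^{(j)}(\mu)$ explicitly. Let $\phi^\mu_j\in\op{V}(\mu)$ be the unique function with $\phi^\mu_j|_K=\sigma_j$; then $\phi^\mu_j(e)=1$, so evaluating $\plus{J}(\mu)\phi^\mu_j$ at the identity gives
\[
\plus{c}{}^{(j)}(\mu)=\bigl(\plus{J}(\mu)\phi^\mu_j\bigr)(e)=\int_{\plus{N}}\phi^\mu_j(n)\,\dd n .
\]
Writing $n=\left[\begin{smallmatrix}1&y\\0&1\end{smallmatrix}\right]$ and doing the (elementary) computation of the Iwasawa decomposition $G=KA\op{N}$ of $n$ — of the same type that produced \eqref{eq-iwasawa-matrix-calc} — one finds that the rotation part of $n$ has angle $\arctan y$, while the $A$-part $a(n)$ satisfies $\op{\delta}(a(n))=1+y^2$ and $a(n)^{-i\mu}=(1+y^2)^{is/2}$, where $s=\mu(\left[\begin{smallmatrix}1&0\\0&-1\end{smallmatrix}\right])$. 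Hence $\phi^\mu_j(n)=e^{\pm ij\arctan y}(1+y^2)^{(is-1)/2}$ and
\[
\plus{c}{}^{(j)}(\mu)=\int_{\R}e^{\pm ij\arctan y}\,(1+y^2)^{(is-1)/2}\,\dd y .
\]
Using $e^{i\arctan y}=(1+iy)(1+y^2)^{-1/2}$ this becomes a standard Beta integral $\int_{\R}(1+iy)^{-a}(1-iy)^{-b}\,\dd y$ with $a,b$ affine in $s$, which converges exactly on $\operatorname{Im}\mu>0$ and evaluates to an explicit ratio of $\Gamma$-functions; that ratio is manifestly meromorphic on all of $\C$ and is the required extension of $\plus{c}{}^{(j)}$. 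The function $\op{c}{}^{(j)}$ is handled identically, interchanging $\plus{N}$ and $\op{N}$ (so $y\mapsto-y$), now with convergence on $\operatorname{Im}\mu<0$.

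I do not anticipate any real obstacle; the only delicate points are bookkeeping. One must keep straight the $\pnm{\delta}$-shift built into the definition~\eqref{eqn-V-mu-def} of $\pnm{V}(\mu)$, check that the two compact models really do carry the \emph{same} left-regular $K$-action, pin down the branches of the powers $(1+iy)^{-a}$ and $(1-iy)^{-b}$, and verify that the Beta integral converges on precisely the stated half-plane rather than a translate. The closed forms obtained for the $\pnm{c}{}^{(j)}$ are the classical Harish-Chandra $c$-functions for $\SL(2,\R)$, recorded for instance in \cite{Wallach1} and \cite{KnappBeyond}, and they are what the remainder of Section~\ref{appendix-fourier} will use.
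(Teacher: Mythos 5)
Your proposal is correct and takes essentially the same route as the paper: the paper's proof simply records the closed-form expression \eqref{cj_equation} for $\pnm{c}{}^{(j)}$ and refers the reader to \cite[Lemma 10.5.1]{Wallach2} and \cite[A.3]{Eguchi-Tanaka}, where exactly the Iwasawa/Beta-integral computation you sketch is carried out. You add the short $K$-equivariance argument for scalarity on the one-dimensional $K$-types, which the paper leaves implicit, and you carry out the integral yourself rather than citing; both of those fill in detail but do not change the method. The one bookkeeping item to watch is the sign of the rotation angle in the Iwasawa decomposition $n=ka\op{n}$ (one gets $\theta=-\arctan y$, so $\sigma_j(k)=e^{-ij\arctan y}$), but since your $\pm$ covers this and the resulting $\Gamma$-ratio is even in $j$, the final formula matches \eqref{cj_equation} after applying the Legendre duplication formula.
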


\begin{proof}
The functions $\pnm{c}{}^{(j)}$ are computed directly in representation theory, and one finds  that
\begin{equation}\label{cj_equation}
\pnm c{}^{(j)}(\mu)  =  \frac{ \pi^{\frac{1}{2}} \Gamma\left( \frac{\mp i\mu}{2} \right) \Gamma\left( \frac{1\mp i\mu}{2} \right) }{ \Gamma \left( \frac{1\mp i\mu+j}{2} \right) \Gamma \left( \frac{ 1\mp i\mu-j}{2} \right) }.
\end{equation}
See \cite[Lemma 10.5.1]{Wallach2}, where an integral formula for $\pnm{c}{}^{(j)}$ is derived; and \cite[A.3]{Eguchi-Tanaka}, where the integrals arising in the case of $\SL(2,\R)$ are evaluated. Compare \cite[Theorem 31, p.252]{Varadarajan}.
  \end{proof}
  
For odd $j\in \mathbb Z $ the functions $\pnm{c}{}^{(j)}$ are   smooth on the real line $\mathfrak a^* \subseteq \mathfrak a^*_\C$, and they multiply the Schwartz space $\Sc(\mathfrak a^*)$ into itself. 

For even $j\in \mathbb \Z$ this is no longer true, since $\pnm{c}{}^{(j)}(\mu)$ has a pole at $\mu =0$.  But the product  $\mu \cdot \pnm{c}{}^{(j)}(\mu)$  multiplies the Schwartz space into itself.  So $\pnm{c}{}^{(j)}$ maps Schwartz functions into tempered distributions (namely products of Schwartz functions times the principal value distribution $1/\mu$).  Theorem~\ref{c_theorem0}  gives  the following formula for $\pnm{J}$:

  \begin{theorem}
  \label{c_theorem1}
For every $j \in \mathbb Z$ the  diagram
 \[
 \xymatrix{
 \Sc(\mathfrak a^*, \Sc_j(K)) \ar[r]^{ \pnm{c}{}^{(j)} } &   \Sc'(\mathfrak a^*, \Sc_j(K))  \\ 
 \HC(G/\pnm{N}) \ar[u]^{\Fourier}_\cong\ar[r]_{\pnm{J}}& \HC' (G/\mnp{N})_j\ar[u]_{\Fourier}^\cong
 }
 \]
is commutative \textup{(}the primes denote dual spaces of distributions\textup{)}. \qed
\end{theorem}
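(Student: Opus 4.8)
The plan is to derive the commutativity of the diagram from the pointwise-in-$\mu$ identity \eqref{eq-pointwise-J-def}, which says $\widehat{\plus{J}h}(\mu)=\plus{J}(\mu)\widehat h(\mu)$, together with Theorem~\ref{c_theorem0}, which says $\plus{J}(\mu)$ acts on the $\sigma_j$-isotypic piece $\Sc_j(K)\subseteq V(\mu)$ as the scalar $\plus c^{(j)}(\mu)$. Combining these gives the desired identity $\widehat{\plus{J}h}(\mu)=\plus c^{(j)}(\mu)\widehat h(\mu)$ on the open half-plane $\operatorname{Im}\mu>0$, as an equality of holomorphic $\Sc_j(K)$-valued functions; the work is to descend this to the real axis $\germ a^*$ in the sense of distributions, and then to pass from the dense subspace $\Sc(G/\op N)$ to all of $\HC(G/\op N)$. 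I would treat $\plus J$ (using the half-plane $\operatorname{Im}\mu>0$); the argument for $\op J$ is identical with the half-plane reflected.

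First I would record that all four maps in the diagram are continuous, so that it suffices to prove commutativity on $\Sc(G/\op N)\subseteq\HC(G/\op N)$, which is dense. The Fourier isomorphism \eqref{eqn-fourier-isomorphism} and its transpose on the distribution spaces are continuous by construction; multiplication by $\plus c^{(j)}$ is continuous $\Sc(\germ a^*,\Sc_j(K))\to\Sc'(\germ a^*,\Sc_j(K))$ because, for odd $j$, it is multiplication by a smooth tempered function, and for even $j$ it is $\psi\mapsto\bigl(\mu\,\plus c^{(j)}(\mu)\bigr)\psi$ followed by multiplication by the fixed tempered distribution $1/\mu$ occurring in the statement, the first step being continuous since $\mu\mapsto\mu\,\plus c^{(j)}(\mu)$ is a smooth tempered multiplier. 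Finally $\plus J\colon\HC(G/\op N)\to\HC'(G/\plus N)$ is continuous: by Lemma~\ref{lem-J-integral-convergence} together with the uniform bound of Lemma~\ref{lem-Jh-nearly-HC} (whose constant is a continuous seminorm of $h$) one has, for $\psi\in\HC(G/\plus N)$, an estimate $|\langle\plus Jh,\psi\rangle|\le p(h)\,q_m(\psi)\int_{G/\plus N}\Xi_{G/\plus N}^2(1+\log\|x\|)^{-m}\,\dd x$, and the last integral is finite for $m$ large by the $G/N$-analogue of Proposition~\ref{prop-almostL2}.

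Next, for $h\in\Sc(G/\op N)$ I would note that both sides of the identity extend holomorphically to $\operatorname{Im}\mu>0$: the left-hand side by the lemma preceding \eqref{eq-pointwise-J-def}, and the right-hand side because $\widehat h$ is in fact entire in $\mu$ (the identification of $\Sc(G/\op N)$ with Schwartz functions on $\R^2$ vanishing to infinite order at the origin, as in Remark~\ref{rem-GmodN-space}, makes the integrand of $\widehat h$ decay super-exponentially along $A$ in both directions), while $\plus c^{(j)}$ has, by \eqref{cj_equation}, all of its poles in $\operatorname{Im}\mu\le 0$. They agree on the half-plane by \eqref{eq-pointwise-J-def} and Theorem~\ref{c_theorem0}. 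The crucial step is then to identify the distributional boundary values as $\operatorname{Im}\mu\downarrow 0$. On the left, the two-sided decay of $\plus Jh$ supplied by Lemmas~\ref{lem-Jh-nearly-HC} and \ref{lem-half-very-rapid-decay} — super-rapid as $\plus\delta\to 0$ and bounded by $\Xi_{G/\plus N}$ otherwise — is exactly what makes $\widehat{\plus{J}h}(\mu)$, for $\operatorname{Im}\mu>0$, a Laplace-type transform whose boundary value is the distributional Fourier transform $\Fourier(\plus Jh)$ in $\Sc'(\germ a^*,\Sc_j(K))$. On the right, the boundary value of $\mu\mapsto\plus c^{(j)}(\mu)\widehat h(\mu)$ is precisely the image of $\widehat h$ under the top arrow of the diagram: for odd $j$ this is clear, and for even $j$ it is $\bigl(\mu\,\plus c^{(j)}(\mu)\bigr)\widehat h(\mu)$ times the relevant boundary value of $1/\mu$, where one uses that $\widehat h$ is a Schwartz (indeed entire) function of $\mu$. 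This establishes commutativity on $\Sc(G/\op N)_j$, and density together with the continuity of the previous paragraph promotes it to all of $\HC(G/\op N)_j$.

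The hard part will be the boundary-value analysis in the third step: making rigorous that the holomorphic function $\widehat{\plus{J}h}(\mu)$ on the half-plane has the distributional Fourier transform of $\plus Jh$ as its boundary value — a Paley–Wiener/Hardy-space statement tied directly to the decay estimates of Lemmas~\ref{lem-Jh-nearly-HC} and \ref{lem-half-very-rapid-decay} — and, simultaneously, matching the boundary value of the $c$-function at its pole at $\mu=0$ (for even $j$) with the distribution $1/\mu$ used in the statement. Everything else — the absolute convergence of the defining integrals, the Fubini-type interchange underlying \eqref{eq-pointwise-J-def}, and the density-and-continuity bookkeeping — is routine given the lemmas already in place.
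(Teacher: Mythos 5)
Your approach is essentially the one the paper takes: the paper states Theorem~\ref{c_theorem1} with an inline \verb|\qed|, treating it as an immediate consequence of the pointwise identity $\widehat{\plus{J}h}(\mu) = \plus{J}(\mu)\widehat{h}(\mu)$ from \eqref{eq-pointwise-J-def} combined with the isotypic computation of $\pnm{J}(\mu)$ in Theorem~\ref{c_theorem0} and the preceding remarks on $\pnm{c}^{(j)}$ as a multiplier into tempered distributions. You have correctly reconstructed this chain of reasoning and, beyond that, filled in the bookkeeping the paper leaves implicit — density of $\Sc(G/\op N)$, continuity of the four arrows, holomorphy of $\widehat h$ for $h$ Schwartz together with the location of the $\Gamma$-poles in $\operatorname{Im}\mu\le 0$, and the distributional boundary-value passage from the open half-plane to $\germ a^*$ — all of which match the lemmas the paper has in place. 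Your flagging of the boundary-value step as the genuinely nontrivial one is apt, but since the paper itself regards that step as routine given Lemmas~\ref{lem-Jh-nearly-HC} and~\ref{lem-half-very-rapid-decay}, there is no daylight between your reconstruction and the paper's intended argument.
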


Obviously, the theorem suggests we invert $\pnm{J}$ by forming the reciprocals of the functions $\pnm{c}{}^{(j)}$.  For odd $j\in \mathbb Z $ the functions $\pnm{c}{}^{(j)}$ are    nowhere vanishing on $\mathfrak a^*$,  and their reciprocals do indeed multiply the Schwartz space into itself.    For even $j\in \mathbb Z$ the    functions $\pnm{c}{}^{(j)}$ have no zeros in $\mathfrak a^*$, and once again the reciprocal of $\pnm{c}{}^{(j)}$ is a multiplier of the Schwartz space.  More is true:
 
   \begin{theorem}\label{c_theorem2}
The combined operators  on the algebraic direct sum 
\[
 \bigoplus _{j\in \mathbb Z}  \Sc(\mathfrak a^*, \Sc_j(K)) \subseteq \Sc(\mathfrak a^*, \Sc(K))
\]
 that multiply  $\Sc_j (K)$-valued functions by   $\pnm{c}{}^{(j)}(\mu)^{-1}$ extend to   continuous operators
\[
\pnm c^{-1}: \Sc (\mathfrak a^* , \Sc(K)) \longrightarrow \Sc(\mathfrak a^*, \Sc(K)).
\]
\end{theorem}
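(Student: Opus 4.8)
The plan is to reduce the statement to uniform (in $j$) polynomial bounds on $\pnm{c}{}^{(j)}(\mu)^{-1}$ and all its $\mu$-derivatives, and then to obtain those bounds from an explicit factorization of the meromorphic function in \eqref{cj_equation} into a harmless ratio of Gamma functions times a finite product of unimodular (``Blaschke'') factors. It is enough to treat $\plus{c}$, since for real $\mu$ one has $\op{c}{}^{(j)}(\mu)^{-1} = \overline{\plus{c}{}^{(j)}(\mu)^{-1}}$ (replace $\mu$ by $-\mu$ in \eqref{cj_equation}), so the two cases are interchanged by complex conjugation of the Schwartz functions involved; and it is enough to treat $j\ge 0$, because the right side of \eqref{cj_equation} involves $j$ only through $\Gamma(\tfrac{1-i\mu+j}{2})\Gamma(\tfrac{1-i\mu-j}{2})$, which is even in $j$. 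Identifying $\Sc(\mathfrak a^*,\Sc(K))$ with the space of families $(\phi_j)_{j\in\Z}$, $\phi_j\in\Sc(\mathfrak a^*)=\Sc(\R)$ the $\Sc_j(K)$-component, topologized by the seminorms $\sup_{\mu\in\R,\,j\in\Z}(1+|\mu|)^{M}(1+|j|)^{N}|\partial_\mu^{r}\phi_j(\mu)|$, and applying the Leibniz rule to $\partial_\mu^{r}\bigl(\plus{c}{}^{(j)}(\mu)^{-1}\phi_j(\mu)\bigr)$, one sees that the operator of the theorem is well defined, continuous, and (since $\bigoplus_j\Sc(\mathfrak a^*,\Sc_j(K))$ is dense) unique, provided: each $\plus{c}{}^{(j)}(\,\cdot\,)^{-1}$ is smooth on $\R$, and for every $r\ge 0$ there are constants $C_r,N_r$ with $\bigl|\partial_\mu^{r}\,\plus{c}{}^{(j)}(\mu)^{-1}\bigr|\le C_r(1+|\mu|)^{N_r}(1+|j|)^{N_r}$ uniformly in $\mu$ and $j$.

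The main step is the factorization. Starting from \eqref{cj_equation} and repeatedly applying $\Gamma(z+1)=z\,\Gamma(z)$ to move the arguments $\tfrac{1-i\mu\pm j}{2}$ back to $\tfrac{1-i\mu}{2}$ and $1-\tfrac{i\mu}{2}$, I expect to obtain, for $j=2k$ even,
\[
\plus{c}{}^{(2k)}(\mu)^{-1}=\frac{-i\mu\,\Gamma\!\bigl(\tfrac{1-i\mu}{2}\bigr)}{2\sqrt{\pi}\,\Gamma\!\bigl(1-\tfrac{i\mu}{2}\bigr)}\ \prod_{m=1}^{k}\Bigl(-\frac{(2m-1)-i\mu}{(2m-1)+i\mu}\Bigr),
\]
and, for $j=2k+1$ odd,
\[
\plus{c}{}^{(2k+1)}(\mu)^{-1}=\frac{\Gamma\!\bigl(1-\tfrac{i\mu}{2}\bigr)}{\sqrt{\pi}\,\Gamma\!\bigl(\tfrac{1-i\mu}{2}\bigr)}\ \prod_{m=1}^{k}\Bigl(-\frac{2m-i\mu}{2m+i\mu}\Bigr).
\]
Two features make these formulas effective. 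First, each factor of the displayed products is, for real $\mu$, a ratio of a complex number to minus its conjugate, hence unimodular; so the product $B_k(\mu)$ satisfies $|B_k(\mu)|\equiv1$, and $(\log B_k)'(\mu)=\sum_{m=1}^{k}\frac{-2i\alpha_m}{\alpha_m^{2}+\mu^{2}}$ with $\alpha_m\in\{2m-1\}$ or $\{2m\}$, whence $\bigl|(\log B_k)^{(r)}(\mu)\bigr|\le C_r\sum_{m=1}^{k}\alpha_m^{-r}\le C_r'\bigl(1+\log(1+|j|)\bigr)$, uniformly in $\mu$, for every $r\ge1$; the higher-order chain rule together with $|B_k|\equiv1$ then gives $\bigl|B_k^{(r)}(\mu)\bigr|\le C_r''(1+|j|)$, uniformly in $\mu$. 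Second, the Gamma-prefactors are $\Gamma(\tfrac{1-i\mu}{2})/\Gamma(1-\tfrac{i\mu}{2})$ or its reciprocal, times $1$ or $-i\mu/2$; in all four cases the prefactor is real-analytic on all of $\R$ (the pole of $\Gamma$ at $0$ has been cleared), extends holomorphically to a fixed strip $|\operatorname{Im}\mu|<\tfrac12$, and grows there like $(1+|\mu|)^{1/2}$ by Stirling's formula, so by Cauchy's estimates it and all its $\mu$-derivatives are $O_r\bigl((1+|\mu|)^{N_r}\bigr)$. Combining these two ingredients through the Leibniz rule yields the required bound on $\partial_\mu^{r}\,\plus{c}{}^{(j)}(\mu)^{-1}$, and the $\op{c}$ case follows by conjugation; this completes the proof modulo the factorization identities, which are a direct, if slightly lengthy, manipulation of \eqref{cj_equation}.

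The one genuinely substantive point — and the expected main obstacle — is that the $j$-dependence must be controlled \emph{uniformly}: knowing merely that each $\pnm{c}{}^{(j)}(\,\cdot\,)^{-1}$ is a Schwartz multiplier (which is all one needs to invert $\pnm{J}$ on a single $K$-isotype) does not by itself produce a single continuous operator on all of $\Sc(\mathfrak a^*,\Sc(K))$. It is precisely the exact cancellation $|B_k(\mu)|\equiv1$, coming from the Blaschke-product form of the factorization, that makes the estimates uniform in $j$. A secondary, purely bookkeeping, difficulty is to track the Gamma functions carefully enough to confirm that no pole survives on the real axis; the places to watch are $\mu=0$ and, for odd $j$, the cancellations between numerator and denominator in \eqref{cj_equation} that occur there.
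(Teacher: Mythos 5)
Your proof is correct and is exactly the route the paper intends: the paper's own proof is the single sentence "This follows from the explicit formula \eqref{cj_equation} and the functional equation for the $\Gamma$-function," and your factorization via repeated use of $\Gamma(z+1)=z\Gamma(z)$ into a $j$-independent Gamma ratio (with the pole at $\mu=0$ cleared) times a unimodular Blaschke-type product is precisely what that citation is gesturing at, with the details filled in. In particular, your identification of the real content — the need for estimates \emph{uniform} in $j$, achieved through $|B_k(\mu)|\equiv 1$ and the logarithmic growth of $\sum_m\alpha_m^{-1}$ — is the right diagnosis; the factorization identities, the $O((1+|\mu|)^{1/2})$ Stirling bound on the prefactor, and the reduction of $\op{c}$ to $\plus{c}$ all check out.
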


\begin{proof}
This follows from the explicit formula \eqref{cj_equation} and the functional equation for the $\Gamma$-function.
\end{proof}

We have found operators satisfying item (a) of Theorem~\ref{I_theorem}: 
  
\begin{theorem}
 The operators $\pnm{I}$ defined by the diagram 
 \[
 \xymatrix{
 \Sc(\mathfrak a^*, \Sc(K)) \ar[r]^{ \pnm{c}^{-1}} &   \Sc(\mathfrak a^*, \Sc(K))  \\ 
 \HC(G/\pnm{N}) \ar[u]^{\Fourier}_\cong\ar[r]_{\pnm{I}}& \HC (G/\mnp{N})\ar[u]_{\Fourier}^\cong
 }
 \]
 are right-inverse to the intertwining operators $\pnm{J}$. \qed
\end{theorem}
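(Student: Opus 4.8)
The plan is to transport the statement, via the Fourier isomorphism $\Fourier$ of \eqref{eqn-fourier-isomorphism}, to an almost tautological assertion about multiplication operators on $\Sc(\mathfrak a^*,\Sc(K))$. First I would record that $\pnm{I}$ is genuinely well defined and continuous as a map $\HC(G/\pnm{N})\to\HC(G/\mnp{N})$: the operator $\pnm{c}^{-1}$ is continuous on $\Sc(\mathfrak a^*,\Sc(K))$ by Theorem~\ref{c_theorem2}, and $\Fourier$ is a topological isomorphism at both ends, so $\pnm{I}=\Fourier^{-1}\circ\pnm{c}^{-1}\circ\Fourier$ is continuous.

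Next I would verify $\pnm{J}\circ\pnm{I}=\id$ on the algebraic direct sum $\bigoplus_{j\in\Z}\Fourier^{-1}\bigl(\Sc(\mathfrak a^*,\Sc_{j}(K))\bigr)$, which is dense in $\HC(G/\pnm{N})$ because the algebraic direct sum of the one-dimensional isotypic subspaces $\Sc_{j}(K)$ is dense in $\Sc(K)$. Fix $j$ and an element $h$ whose Fourier transform takes values in $\Sc_{j}(K)$. Then $\Fourier(\pnm{I}h)(\mu)=\pnm{c}^{(j)}(\mu)^{-1}\,\widehat{h}(\mu)$, and this is again an \emph{honest} $\Sc_{j}(K)$-valued Schwartz function, since $\pnm{c}^{(j)}(\mu)^{-1}$ multiplies $\Sc(\mathfrak a^*)$ into itself; for even $j$ this step produces a first-order zero at $\mu=0$, matching the simple pole of $\pnm{c}^{(j)}$ there. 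Consequently Theorem~\ref{c_theorem1} applies to $\pnm{I}h$ and gives
\[
\Fourier\bigl(\pnm{J}\,\pnm{I}h\bigr)(\mu)=\pnm{c}^{(j)}(\mu)\cdot\pnm{c}^{(j)}(\mu)^{-1}\,\widehat{h}(\mu)=\widehat{h}(\mu),
\]
where for even $j$ the pole of $\pnm{c}^{(j)}$ is cancelled by the zero introduced at the previous step, so the product is once more an honest function and the equality holds inside $\Sc(\mathfrak a^*,\Sc_{j}(K))$. Thus $\pnm{J}\,\pnm{I}h=h$ on each isotypic component, hence by linearity on the dense subspace.

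Finally I would pass to the closure: $\pnm{I}$ is continuous by the first step and $\pnm{J}$ is continuous by Lemma~\ref{lem-J-integral-convergence}, so $\pnm{J}\circ\pnm{I}$ is a continuous map $\HC(G/\pnm{N})\to C^{\infty}(G/\pnm{N})$ agreeing with the continuous inclusion on a dense subspace; since $C^{\infty}(G/\pnm{N})$ is Hausdorff the two maps coincide, which is exactly the assertion $\pnm{J}\circ\pnm{I}=\id_{\HC(G/\pnm{N})}$. I expect the only point demanding genuine care to be the even-$j$ bookkeeping: one must track the pole of $\pnm{c}^{(j)}$ at the origin and the compensating zero produced by $\pnm{c}^{(j)-1}$, so as to remain inside the space of honest functions to which Theorem~\ref{c_theorem1} applies. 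Everything else is a formal diagram chase, the analytic substance having already been supplied by Theorems~\ref{c_theorem1} and~\ref{c_theorem2}.
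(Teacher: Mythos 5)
Your proposal is correct and takes essentially the same approach as the paper: the result is presented there as an immediate consequence of Theorems~\ref{c_theorem0}--\ref{c_theorem2} (it is marked with a \qed and given no further argument), and you have simply unpacked that ``immediate'' step, carefully tracking the pole/zero cancellation at $\mu=0$ in the even-$j$ components and then closing up by density and continuity.
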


Let us turn to item (b) in Theorem~\ref{I_theorem}.  It is an immediate consequence of the following result and  the fact that, thanks to Plancherel's formula, the Fourier isomorphism \eqref{eqn-fourier-isomorphism} is a unitary isomorphism for the obvious $L^2$-inner products.

   \begin{theorem}\label{c_theorem3}
   If $\mu \in \mathfrak a^*_\C$, then 
 $\overline{\op{c}{}^{(j)}(\mu)} =  {\plus{c}{}^{(j)}( \overline \mu)}$.\end{theorem}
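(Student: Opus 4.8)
The plan is to read everything off the closed formula \eqref{cj_equation}. Recall the sign convention there: the decoration $\plus{}$ goes with the upper sign, so that $\plus{c}{}^{(j)}$ involves $-i\mu$, while $\op{}$ goes with the lower sign, so that
\[
\op{c}{}^{(j)}(\mu) = \frac{\pi^{1/2}\,\Gamma\!\left(\tfrac{i\mu}{2}\right)\Gamma\!\left(\tfrac{1+i\mu}{2}\right)}{\Gamma\!\left(\tfrac{1+i\mu+j}{2}\right)\Gamma\!\left(\tfrac{1+i\mu-j}{2}\right)}.
\]
The only analytic input I would need is the Schwarz reflection principle for the Gamma function: $\Gamma$ is holomorphic on the complement of its poles at $0,-1,-2,\dots$ and takes real values on the positive real axis, hence $\overline{\Gamma(z)} = \Gamma(\overline z)$ throughout its domain.

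First I would apply complex conjugation to the displayed formula, using that $\pi^{1/2}\in\R_{>0}$ and $j\in\Z\subseteq\R$ to pull the conjugation bar inside each Gamma factor. This turns $\overline{\op{c}{}^{(j)}(\mu)}$ into a ratio of exactly the same shape, with arguments $\overline{i\mu/2}$, $\overline{(1+i\mu)/2}$, $\overline{(1+i\mu+j)/2}$, and $\overline{(1+i\mu-j)/2}$. Then I would invoke the elementary identity $\overline{i\mu} = -i\overline{\mu}$ to rewrite these arguments as $-i\overline\mu/2$, $(1-i\overline\mu)/2$, $(1-i\overline\mu+j)/2$, and $(1-i\overline\mu-j)/2$ respectively. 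Comparing with the upper-sign instance of \eqref{cj_equation} evaluated at $\overline\mu$, these are precisely the four Gamma arguments appearing in $\plus{c}{}^{(j)}(\overline\mu)$, so the two quantities agree.

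I do not expect a genuine obstacle here; the statement is an identity of meromorphic functions in $\mu$, to be understood as holding at every $\mu$ where neither side has a pole, and the whole content is the reflection property of $\Gamma$ together with the sign bookkeeping $\overline i = -i$. The one point to state carefully is that the manipulation is compatible with the pole/zero structure: conjugating the factorized formula carries each pole (and zero) locus of $\op{c}{}^{(j)}$ to the corresponding locus of $\plus{c}{}^{(j)}$ under $\mu\mapsto\overline\mu$, so nothing is created or destroyed and the displayed equality is an honest equality of meromorphic functions.
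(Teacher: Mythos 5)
Your argument is correct and is precisely what the paper means by its one-line remark that the identity ``may be verified directly from \eqref{cj_equation}'': you have simply spelled out the verification, namely the Schwarz reflection $\overline{\Gamma(z)} = \Gamma(\overline z)$ together with $\overline{i\mu} = -i\overline{\mu}$ and the reality of $\pi^{1/2}$ and $j$. The paper also notes an alternative route through the adjoint relation for the intertwiners $\pnm{J}(\mu)$, but the primary proof is the same computation you give.
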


\begin{proof} 
This may be verified directly from \eqref{cj_equation}.  It is also equivalent to a well-known adjoint relation among the intertwining operators $\pnm{J}(\mu)$; see \cite[Section 10.5.6]{Wallach2}
\end{proof}

 It remains to consider item (c), on wave packets.  Using the Fourier isomorphism \eqref{eqn-fourier-isomorphism} the operator $B$ from Theorem~\ref{I_theorem} can be viewed as the operator
\begin{equation*}
\widehat B \colon  \Sc(\mathfrak a^*, \Sc(K)) \otimes  \Sc(\mathfrak a^*, \Sc(K)) \longrightarrow C^\infty (G)
\end{equation*}
defined by the wave packet formula 
\begin{equation}
\label{eq-fourier-pic-B}
f_1 \otimes f_2 \longmapsto \left [g \mapsto \int _{\mathfrak a^*} \langle f_1(\mu),  \plus{c}^{-1}(\mu)g^{-1}f_2(\mu)\rangle _{L^2 (K)}\, \dd \mu \right ] ,
\end{equation}
where $g^{-1}$ acts on $f_2 (\mu)\in \Sc(K)$ via the restriction isomorphism
\[
\plus V(\mu) \stackrel \cong \longrightarrow \Sc(K).
\]
We need to show that \eqref{eq-fourier-pic-B} defines a function on $G$ that belongs to Harish-Chandra's Schwartz class. 

We shall use Harish-Chandra's wave packet theorem, and for this purpose it is convenient to consider separately the summands in the decomposition 
\[
 \Sc(\mathfrak a^*, \Sc(K)) =  \Sc(\mathfrak a^*, \Sc(K)_{\even}) \oplus  \Sc(\mathfrak a^*, \Sc(K)_{\odd}).
 \]
 Note that the summands are invariant under the action of $G$ and orthogonal, so we need only consider   the two cases 
 \begin{equation}
 \label{eq-even-or-odd}
 f_1,f_2\in  \Sc(\mathfrak a^*, \Sc(K)_{\even}) 
 \quad\text{or}\quad 
 f_1,f_2 \in \Sc(\mathfrak a^*, \Sc(K)_{\odd}) .
 \end{equation}
 in  \eqref{eq-fourier-pic-B}. In either case, Harish-Chandra showed that the function 
 \[
 g \mapsto \int _{\mathfrak a^*} \langle f_1(\mu),   g^{-1}f_2(\mu)\rangle _{L^2 (K)}\,  \alpha(\mu) \dd \mu
 \]
 belongs to $\HC(G)$, where $\alpha$ is the Plancherel density function for either the even or odd principal series, according to the two alternatives in \eqref{eq-even-or-odd}.   See \cite[Theorem 33, p.255]{Varadarajan} for the case of $G=\SL(2,\R)$ that concerns us here.
 
 In the odd case, the Plancherel density is a smooth and nowhere vanishing function, and both it and its reciprocal multiply the Schwartz space into itself. So since we can write 
 \begin{multline}\label{eq-plancherel-even}
   \int _{\mathfrak a^*} \langle f_1(\mu),   g^{-1}f_2(\mu)\rangle _{L^2 (K)}\, \dd \mu
 \\  =
   \int _{\mathfrak a^*} \langle f_1(\mu),   g^{-1}\alpha (\mu)^{-1}f_2(\mu)\rangle _{L^2 (K)}\, \alpha (\mu) \dd \mu,
\end{multline}
  we find that the left-hand side of \eqref{eq-plancherel-even} is a Harish-Chandra function of $g\in G$ for \emph{any} $f_1$ and $f_2$.
   
  The even case requires a bit more work, since the Plancherel density function vanishes at $\mu =0$.  But  $\mu^2 \alpha(\mu)^{-1}$  is smooth and multiplies the Schwartz space to itself, and so we find by the above argument that 
  
  \begin{theorem}
  \label{thm-HC-wave}
   If   $f_1,f_2\in \Sc(\mathfrak a^*, \Sc(K)_\even)$, and if both functions vanish at $\mu = 0$, or if one of the functions vanishes at $\mu =0$ to order two, then the integral 
 \[
     \int _{\mathfrak a^*} \langle f_1(\mu),   g^{-1}f_2(\mu)\rangle _{L^2 (K)}\, \dd \mu
 \]
  defines  a Harish-Chandra function of $g\in G$.   \qed 
  \end{theorem}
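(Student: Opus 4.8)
The plan is to deduce the statement from Harish-Chandra's wave packet theorem for the even principal series, recalled just above: for \emph{arbitrary} $F_1,F_2\in\Sc(\mathfrak a^*,\Sc(K)_{\even})$ the function $g\mapsto\int_{\mathfrak a^*}\langle F_1(\mu),g^{-1}F_2(\mu)\rangle_{L^2(K)}\,\alpha(\mu)\,\dd\mu$ lies in $\HC(G)$, where $\alpha$ is the (even) Plancherel density. The idea is to rewrite the \emph{unweighted} integral in the statement as such an $\alpha$-weighted integral, at the cost of replacing $f_1,f_2$ by suitable modifications $F_1,F_2$. Doing so introduces a scalar factor $\alpha(\mu)^{-1}$, which is not a multiplier of $\Sc(\mathfrak a^*)$; but, as already observed, $\mu^{2}\alpha(\mu)^{-1}$ \emph{is} a smooth multiplier of $\Sc(\mathfrak a^*)$. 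So the whole matter reduces to manufacturing an extra factor $\mu^{2}$ and distributing it between $f_1$ and $f_2$, and this is exactly what the two hypotheses make possible.

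Here is the mechanism. Scalar functions of $\mu$ commute with the $G$-action on $V(\mu)\cong\Sc(K)$, and $\langle\,\cdot\,,\,\cdot\,\rangle_{L^2(K)}$ is conjugate-linear in its first slot and linear in its second, so for scalar functions $p,q$ on $\mathfrak a^*$ one has
\[
\langle p(\mu)F_1(\mu),\,g^{-1}q(\mu)F_2(\mu)\rangle_{L^2(K)}=\overline{p(\mu)}\,q(\mu)\,\langle F_1(\mu),\,g^{-1}F_2(\mu)\rangle_{L^2(K)}.
\]
If both $f_1$ and $f_2$ vanish at the origin, write $f_i(\mu)=\mu\,g_i(\mu)$ with $g_i\in\Sc(\mathfrak a^*,\Sc(K)_{\even})$; this is legitimate because division by the real scalar $\mu$ carries Schwartz functions vanishing at $0$ to Schwartz functions and preserves the $K$-isotypic decomposition. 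Since $\mu$ is real,
\begin{multline*}
\langle f_1(\mu),g^{-1}f_2(\mu)\rangle_{L^2(K)}=\mu^{2}\langle g_1(\mu),g^{-1}g_2(\mu)\rangle_{L^2(K)}\\
=\langle g_1(\mu),\,g^{-1}\bigl(\mu^{2}\alpha(\mu)^{-1}\bigr)g_2(\mu)\rangle_{L^2(K)}\,\alpha(\mu),
\end{multline*}
and $F_1=g_1$, $F_2=(\mu^{2}\alpha^{-1})g_2$ lie in $\Sc(\mathfrak a^*,\Sc(K)_{\even})$ because $\mu^{2}\alpha^{-1}$ is a Schwartz multiplier. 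If instead one of the two functions vanishes to order two, write \emph{that} function as $\mu^{2}$ times an element of $\Sc(\mathfrak a^*,\Sc(K)_{\even})$ and absorb the multiplier $\mu^{2}\alpha^{-1}$ into the corresponding slot (here one uses that $\alpha$ is real-valued, so $\overline{\mu^{2}\alpha^{-1}}=\mu^{2}\alpha^{-1}$), obtaining again $F_1,F_2\in\Sc(\mathfrak a^*,\Sc(K)_{\even})$ with $\langle f_1(\mu),g^{-1}f_2(\mu)\rangle_{L^2(K)}=\langle F_1(\mu),g^{-1}F_2(\mu)\rangle_{L^2(K)}\,\alpha(\mu)$. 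In every case the integral in the statement equals $\int_{\mathfrak a^*}\langle F_1(\mu),g^{-1}F_2(\mu)\rangle_{L^2(K)}\,\alpha(\mu)\,\dd\mu$, which defines a Harish-Chandra function of $g$ by the wave packet theorem.

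Finally I would note that all of these operations are continuous: division by $\mu$ (resp.\ by $\mu^{2}$) on the relevant subspace, and multiplication by the fixed multiplier $\mu^{2}\alpha^{-1}$, are continuous linear maps on $\Sc(\mathfrak a^*,\Sc(K)_{\even})$, and the wave packet map into $\HC(G)$ is continuous; together with the Fourier isomorphism \eqref{eqn-fourier-isomorphism} this gives the continuity of $B$ asked for in Theorem~\ref{I_theorem}(c). I do not expect a real obstacle in this reduction: all the analytic weight sits in Harish-Chandra's wave packet theorem, invoked here as a black box, and the only point needing (routine) checking is that dividing by $\mu$ and multiplying by $\mu^{2}\alpha^{-1}$ preserve $\Sc(\mathfrak a^*,\Sc(K)_{\even})$ compatibly with the vanishing conditions.
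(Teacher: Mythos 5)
Your argument is correct and follows essentially the same route as the paper: rewrite the unweighted integral as an $\alpha$-weighted wave packet by inserting $\alpha^{-1}\alpha$, use the vanishing hypotheses to extract the factor $\mu^2$ needed to turn $\alpha^{-1}$ into the Schwartz multiplier $\mu^2\alpha^{-1}$, and invoke Harish-Chandra's wave packet theorem. The paper compresses this into the sentence preceding the theorem statement; you have simply supplied the bookkeeping it leaves implicit.
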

  In order to make use of this result, we can begin by noting that the function  
  \[
  \plus{c}^{-1} f_2 \in \Sc(\mathfrak a^*, \Sc(K)_{\even})
  \]
  does indeed vanish at $\mu = 0$, thanks to the explicit formula for $\plus{c}$.  But of course $f_2$ need not, and to cope with this circumstance we need to decompose the Schwartz space  $\Sc(\mathfrak a^*, \Sc(K)_{\even}) $ into even and odd parts using normalized intertwining operators, as follows.
  
\begin{lemma}
There is a $G$-equivariant involution 
\[
W \colon   \Sc(\mathfrak a^*, \Sc(K)_{\even}) \longrightarrow \Sc(\mathfrak a^*, \Sc(K)_{\even})
\]
of the form 
\[
(Wf)(\mu) = W(\mu) f(-\mu)
\]
where each $W(\mu)$ is a unitary operator on $\Sc(K)_\even$ with respect to the $L^2$-inner product, and where $W(0)$ is the identity operator.
\end{lemma}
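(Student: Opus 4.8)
The plan is to realize $W$, through the Fourier isomorphism \eqref{eqn-fourier-isomorphism}, as a renormalized standard intertwining operator. Since the $K$-types of the even principal series are multiplicity-free, and since after Fourier transform the $G$-action on $\Sc(\mathfrak a^*,\Sc(K)_\even)$ is fibrewise the left-regular action on the spaces $V(\mu)$, any $G$-equivariant $W$ of the stated form is forced to preserve each isotypic line $\Sc_j(K)$, $j$ even, and to act there by a scalar $w_j(\mu)$. The task thus reduces to producing scalars $w_j(\mu)$ that (i) are smooth on $\mathfrak a^*=\R$ with all $\mu$-derivatives controlled uniformly enough in $j$, (ii) are unimodular for real $\mu$, (iii) satisfy $w_j(\mu)w_j(-\mu)=1$, (iv) satisfy $w_j(0)=1$, and (v) genuinely assemble into an intertwiner. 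Then $W$ is automatically the orthogonal involution whose $\pm1$-eigenspaces furnish the even/odd decomposition needed afterwards.

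For the construction, and for point (v) and the shape $(Wf)(\mu)=W(\mu)f(-\mu)$, I would assemble $W$ from two $G$-equivariant operators already available. Let $w\in K$ denote the rotation by $\pi/2$, so that $w\plus{N}w^{-1}=\op{N}$; right translation by $w^{-1}$ is a $G$-equivariant map $\HC(G/\op{N})\to\HC(G/\plus{N})$, and under the identifications of both spaces with Harish--Chandra Schwartz functions on $\R^2\setminus\{0\}$ coming from Remark~\ref{rem-GmodN-space} it is literally the identity map, hence a topological isomorphism $\Phi$. On the Fourier side $\Phi$ carries the fibre over $-\mu$ onto the fibre over $\mu$, and since $w\in K$ it acts in the compact picture on $\Sc_j(K)$ by the scalar $e^{-ij\pi/2}=(-1)^{j/2}$ (for $j$ even), independently of $\mu$. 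The second ingredient is the standard intertwiner $\op{J}$ of Theorem~\ref{c_theorem0}, which is fibrewise multiplication by $\op{c}{}^{(j)}(\mu)$. I then take $W$ to be the operator with fibrewise action
\[
W(\mu)\;=\;\gamma(\mu)\cdot\Phi(\mu)\circ\op{J}(-\mu)\colon\ \plus{V}(-\mu)\longrightarrow\plus{V}(\mu),
\]
where $\gamma$ is the explicit $j$-independent scalar
\[
\gamma(\mu)\;=\;-\,\frac{i\mu}{\sqrt{2\pi}}\,\sqrt{\frac{\tanh(\pi\mu/2)}{\mu}}
\]
(with $\tanh(\pi\mu/2)/\mu$ a positive smooth function on $\R$), so that $\gamma$ is smooth and odd with $\gamma(\mu)\sim-\tfrac i2\mu$ near $\mu=0$. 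On $\Sc_j(K)$ this $W(\mu)$ is multiplication by $w_j(\mu)=(-1)^{j/2}\gamma(\mu)\,\op{c}{}^{(j)}(-\mu)=(-1)^{j/2}\gamma(\mu)\,\plus{c}{}^{(j)}(\mu)$, using $\op{c}{}^{(j)}(-\mu)=\plus{c}{}^{(j)}(\mu)$ from \eqref{cj_equation}; since $\Phi$, $\op{J}$ and scalar multiplication are all $G$-equivariant, so is $W$, and it has the asserted form.

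The verification of (i)--(iv) for these $w_j$ comes down to \eqref{cj_equation} and elementary $\Gamma$-function identities. For (ii): Theorem~\ref{c_theorem3} gives $\overline{\plus{c}{}^{(j)}(\mu)}=\op{c}{}^{(j)}(\mu)=\plus{c}{}^{(j)}(-\mu)$ for real $\mu$, and a short computation with the reflection formula yields $|\plus{c}{}^{(j)}(\mu)|^{2}=\plus{c}{}^{(j)}(\mu)\plus{c}{}^{(j)}(-\mu)=2\pi/(\mu\tanh(\pi\mu/2))$, independent of $j$; since $|\gamma(\mu)|^{2}=\mu\tanh(\pi\mu/2)/(2\pi)$ this gives $|w_j(\mu)|=1$. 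For (iii): $w_j(\mu)w_j(-\mu)=\gamma(\mu)\gamma(-\mu)\,\plus{c}{}^{(j)}(\mu)\plus{c}{}^{(j)}(-\mu)$, and as $\gamma$ is odd with $\gamma(\mu)^{2}=-|\gamma(\mu)|^{2}$ on $\R$ one has $\gamma(\mu)\gamma(-\mu)=|\gamma(\mu)|^{-2}$, so the two factors cancel. For (iv) and the regularity across $\mu=0$ in (i): for $j$ even, $\plus{c}{}^{(j)}$ has a simple pole at $0$ with residue $2i(-1)^{j/2}$ (from $\Gamma(-i\mu/2)$ and the reflection formula), so $\gamma(\mu)\plus{c}{}^{(j)}(\mu)\to(-\tfrac i2)(2i(-1)^{j/2})=(-1)^{j/2}$ and hence $w_j(\mu)\to1$; the pole of $\op{J}(-\mu)$ on the even $K$-types is annihilated exactly by the zero of $\gamma$ at $0$, so $W$ is everywhere regular. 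The $j$-uniform bounds in (i) are obtained, exactly as in the proof of Theorem~\ref{c_theorem2}, from the functional equation of $\Gamma$: the logarithmic $\mu$-derivatives of $w_j$ are differences of digamma and polygamma functions whose growth in $\mu$ and in $j$ is at most logarithmic, which the $K$-Sobolev seminorms defining $\Sc(\mathfrak a^*,\Sc(K)_\even)$ absorb.

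The step I expect to be hardest is (iv) together with the $j$-uniform regularity at $\mu=0$: the even $K$-types sit precisely on the pole of the Harish--Chandra $c$-function, the naive unitary normalization $\plus{c}{}^{(j)}/|\plus{c}{}^{(j)}|$ has a jump discontinuity there, and one must check that multiplying instead by the \emph{complex} normalizer $\gamma$ simultaneously restores fibrewise unitarity, removes the singularity, yields the $j$-independent value $1$ at the origin, and does so with Taylor data near $0$ bounded uniformly in $j$. This is the one point where the closed form \eqref{cj_equation}, rather than any soft argument, is indispensable.
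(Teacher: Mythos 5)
Your construction is correct and is precisely the explicit version of what the paper obtains by citation: the paper's proof is a one-line reference to Varadarajan (Proposition 22, p.\ 243) for the normalized intertwining operator, and you assemble that same object from the Weyl-element translation $\Phi$, the meromorphically continued intertwiner $\op{J}$, and a scalar normalizer $\gamma$, checking the required identities directly from \eqref{cj_equation}. One minor slip worth noting: since $\gamma$ is odd and purely imaginary on $\R$, one has $\gamma(\mu)\gamma(-\mu)=|\gamma(\mu)|^{2}$ rather than $|\gamma(\mu)|^{-2}$, but as $|\gamma(\mu)|^{2}\,|\plus{c}{}^{(j)}(\mu)|^{2}=1$ the desired relation $w_j(\mu)w_j(-\mu)=1$ is unaffected.
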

  
 \begin{proof}
 An explicit construction of the normalized intertwiner in the special case we are considering is given in \cite[Proposition 22, p.243]{Varadarajan}.
 \end{proof}
 
 We can now decompose any $f\in \Sc(\mathfrak a^*, \Sc(K)_{\even}) $ into symmetric or antisymmetric parts under the action of the involution $W$ (that is, $+1$ or $-1$ eigenvectors).  
 
 \begin{lemma}
 \label{lem-symmetry-trick}
If one of $f_1, f_2\in \Sc( \mathfrak a^*, \Sc(K)_\even)$ is symmetric, and the other is antisymmetric, then the function 
 \[
   g \longmapsto   \int _{\mathfrak a^*} \langle f_1(\mu),   g^{-1}f_2(\mu)\rangle _{L^2 (K)}\, \dd \mu
 \]
 is identically zero.
 \end{lemma}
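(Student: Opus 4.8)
The plan is to exhibit a hidden symmetry of the bilinear expression appearing in the lemma and then win on signs. Write $B(f_1,f_2)(g)$ for the function $g\mapsto \int_{\mathfrak a^*}\langle f_1(\mu),g^{-1}f_2(\mu)\rangle_{L^2(K)}\,\dd\mu$ of the statement. I claim that $B$ is unchanged when the normalized intertwiner $W$ is applied to \emph{both} arguments, i.e.\ $B(Wf_1,Wf_2)=B(f_1,f_2)$. Granting this, the lemma is immediate: if $f_1$ is $W$-symmetric and $f_2$ is $W$-antisymmetric, then
\[
B(f_1,f_2)=B(Wf_1,Wf_2)=B(f_1,-f_2)=-B(f_1,f_2),
\]
so $B(f_1,f_2)=0$; the case with the roles of $f_1,f_2$ exchanged is identical (the expression is additive and homogeneous in each slot).

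To set up the computation I would first record the two structural properties of $W$ supplied by the preceding lemma. Each $W(\mu)$ is unitary on $\Sc(K)_{\even}$ for the $L^2(K)$-inner product; and the $G$-equivariance of $W$, once one recalls that $G$ acts fibrewise on $\Sc(\mathfrak a^*,\Sc(K))$ through the representations on $V(\mu)\cong\Sc(K)$ and that $W$ interchanges the fibres over $\mu$ and $-\mu$, is exactly the intertwining identity $\pi_\mu(g)W(\mu)=W(\mu)\pi_{-\mu}(g)$ for all $g\in G$ (equivalently $\pi_\mu(g)^{-1}W(\mu)=W(\mu)\pi_{-\mu}(g)^{-1}$).

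The claim then follows by a short manipulation of the integral defining $B(Wf_1,Wf_2)(g)$: substitute $(Wf_i)(\mu)=W(\mu)f_i(-\mu)$, push $\pi_\mu(g)^{-1}$ through $W(\mu)$ using the intertwining identity, cancel the two copies of $W(\mu)$ using unitarity, and finally change variables $\mu\mapsto-\mu$ (Lebesgue measure on $\mathfrak a^*$ is symmetric) to land back on $B(f_1,f_2)(g)$. Convergence of the $\mu$-integral is not at issue here, since for fixed $g$ the integrand is a genuine Schwartz function of $\mu$, as already used in this section.

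I do not expect a real obstacle: the content is entirely formal given the properties of $W$ from the previous lemma. The one place to be careful is the bookkeeping — correctly unwinding ``$G$-equivariance of $W$'' into the fibre-swapping intertwining relation, and keeping the conjugate-linearity convention of $\langle\,\cdot\,,\,\cdot\,\rangle_{L^2(K)}$ together with the meaning of ``$g^{-1}$ acts on $f_2(\mu)$ via the restriction isomorphism $\plus V(\mu)\cong\Sc(K)$'' consistent, so that the cancellation of the $W(\mu)$'s and the variable change really reproduce $B(f_1,f_2)$ rather than a twisted variant of it.
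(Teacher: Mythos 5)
Your proof is correct and follows essentially the same route as the paper: the paper states the symmetry as $B(Wf_1,f_2)=B(f_1,Wf_2)$ (using unitarity, $G$-equivariance, and the change of variables $\mu\mapsto-\mu$), which for the involution $W$ is equivalent to your $B(Wf_1,Wf_2)=B(f_1,f_2)$, and the sign argument is then identical.
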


\begin{proof} 
It follows from unitarity and $G$-equivariance of the involution that 
 \[
 \int _{\mathfrak a^*} \langle (Wf_1)(\mu),   g^{-1}f_2(\mu)\rangle _{L^2 (K)}\, \dd \mu
 =
 \int _{\mathfrak a^*} \langle f_1(\mu),   g^{-1}(Wf_2)(\mu)\rangle _{L^2 (K)}\, \dd \mu ,
 \]
 and the lemma follows from this.
 \end{proof}
  
Returning to \eqref{eq-fourier-pic-B} ,  decompose $\plus{c}^{-1}f_2$ into its symmetric  and antisymmetric parts. Since $\plus{c}^{-1}f_2$ vanishes at $\mu =0$, its odd part vanishes at $\mu=0$  too, while its even part vanishes there  to \emph{second} order. 
Treating each separately we find from Lemma~\ref{lem-symmetry-trick} that  \eqref{eq-fourier-pic-B} is a sum of two terms, one from a pairing of two anti-symmetric functions, and one from the pairing of a symmetric function with another that vanishes to order two at $\mu=0$.  Item (c) in Theorem~\ref{I_theorem}  now follows from Theorem~\ref{thm-HC-wave}.

\bibliography{temperedbib}
\bibliographystyle{alpha}

\end{document}